\documentclass[11pt,letterpaper]{article}
\usepackage{amsthm,amsmath,amssymb,amsfonts} 
\usepackage{epsfig} 
\usepackage{latexsym,nicefrac,bbm}
\usepackage{xspace}
\usepackage{color,fancybox,graphicx,subcaption,fullpage}
\usepackage[top=1.25in, bottom=1.25in, left=1.25in, right=1.25in]{geometry}
\usepackage{tabularx}
\usepackage{hyperref} 
\usepackage{pdfsync}
\usepackage{multicol}
\usepackage{cite}
\usepackage[capitalize]{cleveref}

\renewcommand{\epsilon}{\varepsilon}

\usepackage{epsfig}
\usepackage{verbatim}

\theoremstyle{definition}

\usepackage{algorithmicx}
\usepackage{algorithm,caption}
\usepackage{algpseudocode}

\clubpenalty=10000
\widowpenalty = 10000

\usepackage{mhequ}
\def \be{\begin{equs}}
\def \ee{\end{equs}}

\newtheorem{theorem}{Theorem}[section]

\newtheorem{definition}[theorem]{Definition}

\newtheorem{proposition}[theorem]{Proposition}

\newtheorem*{theorem*}{Theorem}
\newtheorem{remark}[theorem]{Remark}
\newtheorem{example}[theorem]{Example}

\crefname{theorem}{Theorem}{Theorems}
\crefname{observation}{Observation}{Observations}
\crefname{proposition}{Proposition}{Propositions}
\crefname{claim}{Claim}{Claims}
\crefname{condition}{Condition}{Conditions}
\crefname{example}{Example}{Examples}
\crefname{fact}{Fact}{Facts}
\crefname{lemma}{Lemma}{Lemmas}
\crefname{corollary}{Corollary}{Corollaries}
\crefname{definition}{Definition}{Definitions}
\crefname{remark}{Remark}{Remarks}
\crefname{example}{Example}{Examples}
\crefname{exercise}{Exercise}{Exercises}

\title{\bf Geodesic Convex Optimization: \\ Differentiation on Manifolds, Geodesics, and Convexity}

\author{Nisheeth K. Vishnoi\footnote{Many thanks to Ozan Y{\i}ld{\i}z for preparing the scribe notes of my lecture on this topic and making the figures.} \\ EPFL, Switzerland} 

\usepackage{mathtools}
\newcommand{\R}{\mathbb{R}}
\newcommand{\Z}{\mathbb{Z}}

\DeclarePairedDelimiterX{\Set}[2]\{\}{%
  \, #1 \;\delimsize\vert\; #2 \,
}
\DeclarePairedDelimiter{\abs}{\lvert}{\rvert}
\DeclarePairedDelimiter{\norm}{\lVert}{\rVert}
\DeclarePairedDelimiter{\inner}{\langle}{\rangle}
\newcommand{\inparen}[1]{\left(#1\right)}             

\newcommand{\Sn}{\mathbb{S}}

\newcommand{\diag}{\mathrm{diag}}
\newcommand{\tr}{\mathrm{tr}}

\newcommand{\intersection}{\cap}

\begin{document}

\maketitle
  
  \thispagestyle{empty}

\begin{abstract}
Convex optimization is a vibrant and successful area due to the existence of a variety of efficient algorithms that leverage the rich structure provided by convexity.
Convexity of a smooth set or a function in a Euclidean space is defined by how it interacts with the standard differential structure in this space -- the Hessian of a convex function has to be positive semi-definite everywhere. 
However, in recent years, there is a growing demand to understand non-convexity and develop computational methods to optimize non-convex functions.
Intriguingly, there is a type of non-convexity that disappears once one introduces a suitable differentiable structure and redefines convexity with respect to the straight lines, or {\em geodesics}, with respect to this structure. 
Such convexity is referred to as {\em geodesic convexity}. 
Interest in studying it arises due to recent reformulations of some non-convex  problems as geodesically convex optimization problems over geodesically convex sets. 
Geodesics on manifolds have been extensively studied in various branches of Mathematics and Physics.
However, unlike convex optimization,  understanding geodesics and  geodesic convexity from a computational point of view largely remains a mystery. 
The goal of this exposition is to introduce the first part of geodesic convex optimization -- geodesic convexity -- in a self-contained manner.
We first present a variety of notions from differential and Riemannian geometry such as differentiation on manifolds, geodesics, and then introduce geodesic convexity.
We conclude by showing that certain non-convex optimization problems such as  computing the Brascamp-Lieb constant and the operator scaling problem have geodesically convex formulations.
\end{abstract}

\newpage

\tableofcontents

\newpage

\section{Beyond Convexity}
In the most general setting, an optimization problem takes the form
\begin{equ}[eq:general-optimization]
\inf_{x\in K} f(x),
\end{equ}
for some set $K$ and some function $f:K\to\R$.
In the  case when $K \subseteq \R^d$, we can talk about the convexity of $K$ and $f$. 
$K$ is said to be convex if any ``straight line'' joining two points in $K$ is entirely contained in $K$, and $f$ is said to be convex if on any such straight line, the average value of $f$ at the end points is at least the value of $f$ at the mid-point of the line.
When $f$ is ``smooth'' enough, there are equivalent definitions of convexity in terms of the standard differential structure in $\R^d$: the gradient or the Hessian of $f$.
Thus, convexity can also be viewed as a property arising from the interaction of the function and how we differentiate in $\R^n$; e.g., the Hessian of $f$ at every point in $K$ should be positive semi-definite. 
When both $K$ and $f$ are convex, the optimization problem in \eqref{eq:general-optimization} is called a convex optimization problem.
The fact that the convexity of $f$ implies that any local minimum of $f$ in $K$ is also a global minimum, along with the fact that computing gradients and Hessians is typically easy in Euclidean spaces, makes it well-suited for developing  algorithms such as gradient descent, interior point methods, and cutting plane methods.
Analyzing the convergence of these methods boils down to understanding how well-behaved  derivatives of the function are; see \cite{boyd2004convex,nesterov2004introductory,vishnoi_2018} for more on  algorithms for convex optimization.

 \paragraph{Geodesic convexity.} In recent times, several non-convex optimization problems have emerged and, as a consequence, there is a need to understand non-convexity and develop methods for such problems. 
Interestingly, there is a type of non-convexity that disappears when we view the domain as a manifold and redefine what we mean by a straight line on it.
This redefinition of a straight line entails the introduction of a differential structure on $\R^n$, or, more generally, on a ``smooth manifold''. 
Roughly speaking, a manifold is a topological space that locally looks like a Euclidean space.
''Differentiable manifolds'' are a special class of manifolds that come with a differential structure that allows one to do calculus over them.
Straight lines on differential manifolds are called ``geodesics'', and  a set that has the property that a geodesic joining any two points in it is entirely contained in the set is called geodesically convex (with respect to the given differential structure).
A function that has this property that  its average value at the end points of a geodesic is at least the value of $f$ at the mid-point of the geodesic is called geodesically convex (with respect to the given differential structure). 
And, when $K$ and $f$ are both geodesically convex, the optimization problem in \eqref{eq:general-optimization} is called a geodesically convex optimization problem.
Geodesically convex functions also have key properties similar to convex functions such as the fact that a local minimum is also a global minimum.

\paragraph{Geodesics.} Geodesics on manifolds have been well-studied, in various branches of Mathematics and Physics. 
Perhaps the most famous use of them is in the Theory of General Relativity by Einstein \cite{einstein1916foundation}.
In general, there is no unique notion of a geodesic on a smooth manifold -- it depends on the differential structure. 
However, if one imposes an additional  ``metric'' structure on the manifold that allows us to measure lengths and angles, there is an alternative, and equivalent, way to define geodesics -- as shortest paths with respect to this metric.
The most famous class of such metrics give rise to  Riemannian manifolds -- these are manifolds where each point has an associated local inner product matrix that is positive semi-definite. 
The fundamental theorem of Riemannian geometry states  that any differential structure that is ``compatible'' with a Riemannian metric is unique and thus we can view geodesics as either straight lines or distance minimizing curves.\footnote{This even holds for what are called pseudo-Riemannian manifolds that arise in relativity.} 

\paragraph{Applications.} Geodesic convex optimization has recently become interesting in computer science and machine learning due to the realization that some important problems that appear to be non-convex at first glance, are geodesically convex if we introduce a suitable differential structure and a metric.
Computationally, however, there is an additional burden to be able to compute geodesics and argue about quantities such as areas and volumes that may no longer have closed form solutions.
For instance, consider the problem  
\begin{equation*}
\inf_{x >0} \log p(x) - \sum_i \log x_i
\end{equation*}
for a polynomial $p(x) \in \R_+[x_1,\ldots, x_n]$.
This problem and its variants arise in computing maximum entropy distributions; see e.g., \cite{gurvits2006hyperbolic, SV17RealStable}.
While this problem is clearly non-convex, it turns out that it is geodesically convex if we consider the positive orthant $\R_+^n$ endowed with the usual differential structure but the Riemannian metric arising as the Hessian of the function 
$$-\sum_i \log x_i.$$
This viewpoint, while not new, can be used to rewrite the optimization problem above as a convex optimization problem in new variables (by replacing $x_i$ by $e^{y_i}$ for $y_i \in \mathbb{R}$)
  and use algorithms from convex optimization to solve it efficiently; see \cite{singh2014entropy, straszak2017entropy}. 

An extension of the above optimization problem to matrices is the Operator Scaling problem. Given an operator $T$ that maps positive definite matrices to positive definite matrices, compute 
\begin{equation*}
\inf_{X \succ 0} \log \det T(X) - \log \det X.
\end{equation*}
This problem was introduced in \cite{gurvits2004classical} and was studied  in the work of \cite{GargGOW16,garg2016algorithmic}. 
This problem is non-convex, but unlike the previous example, there is no obvious way to convexity the problem.
It turns out that this problem is also geodesically convex after the introduction of a suitable metric on the cone of positive definite matrices; see \cite{AGLOW18}.
Further, \cite{AGLOW18}  also showed how to extend a method from convex optimization to this setting. 

A related problem is that of computing the Brascamp-Lieb constant~\cite{brascamp1976best} an important tool from functional analysis with application on convex geometry~\cite{ball1989volumes}, information theory~\cite{carlen2009subadditivity},\cite{LiuCCV16},\cite{LiuCCV17}, machine learning~\cite{hardt2013algorithms}, and theoretical computer science \cite{DSW14, DvirGOS16}.
At the moment, there is no known convex formulation of the Brascamp-Lieb constant.
However, recently, a geodesically convex formulation of the problem over the positive definite cone~\cite{vishnoi2018geodesically} was discovered; it remains open whether this formulation can be solved efficiently.

\paragraph{Organization of this exposition.}
The goal of this exposition is to introduce the reader with geodesic convexity and its applications. 
We do not assume any knowledge of differential geometry and present  the mathematical notions necessary to understand and formulate geodesics and geodesic convexity. 
We start by introducing differentiable and Riemannian manifolds in \cref{sec:manifolds}.
In \cref{sec:differentiation-on-manifolds}, we introduce the notion of differentiation of functions and vector fields on manifolds.
In particular we introduce the notion of an ``affine connection'' that allows us to take derivatives of one vector field on a manifold with respect to another.
We show that affine connections can be completely specified by a tensor whose entries are called ``Christoffel symbols''.
We also prove the existence of the uniqueness of the differential structure compatible with the Riemannian metric -- the ``Levi-Civita connection''.
This gives rise to two views of geodesics, one as straight lines on a manifold and second as length-minimizing curves on a manifold, which are discussed in \cref{sec:geodesic}.
The second view can be reformulated as the ``Euler-Lagrange dynamics'' on a manifold giving rise to the differential equations that enable us to compute geodesics.
Subsequently, we define geodesic convexity  in \cref{sec:g-convexity} and discuss various structural aspects of it.
Finally, we present geodesically convex formulation for the Brascamp-Lieb problem and the Operator Scaling Problem in \cref{sec:BL} and \cref{sec:OS}.
We do not present methods for geodesic convex optimization here.

\section{Manifolds}
\label{sec:manifolds}

Roughly speaking, a manifold is a  topological space that  resembles a Euclidean space  at each point.
This resemblance is a local property and it may be applicable only around a very small neighborhood of each point.

\begin{definition}[\bf Manifold]
A set $M\subseteq \R^n$ with an associated topology $T$ is a $d$-dimensional \emph{manifold} if for any point $p\in M$, there exists an open neighborhood of $p$ with respect to $T$, $U_p$, such that there exists a homeomorphism, $\phi_p$, between $U_p$ and an open set in $\R^d$ with respect to standard topology. 
The tuple $(U_p, \phi_p)$ is called a \emph{chart} and the collection $(U_p, \phi_p)$, for all $p \in M$, is called an \emph{atlas}.
\label{def:manifold}
\end{definition}
\noindent
\cref{fig:manifold_examples} presents some examples of local resemblance to the Euclidean spaces.

\begin{figure}[!htb]
\begin{center}
\includegraphics[scale=0.7]{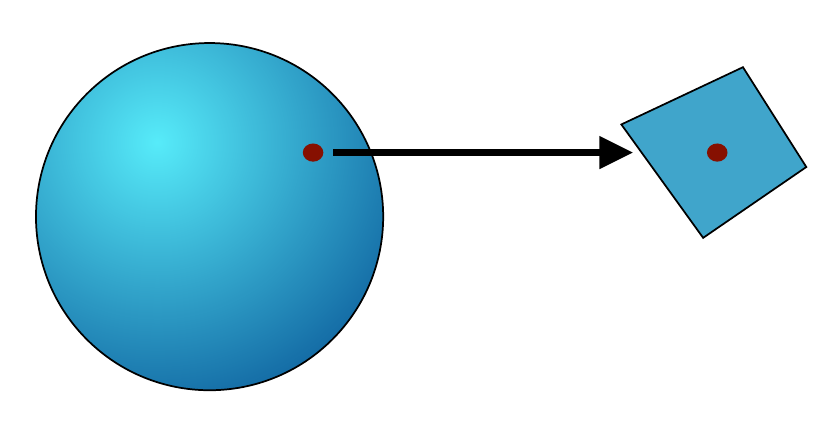}
\hspace{1cm}
\includegraphics[scale=0.7]{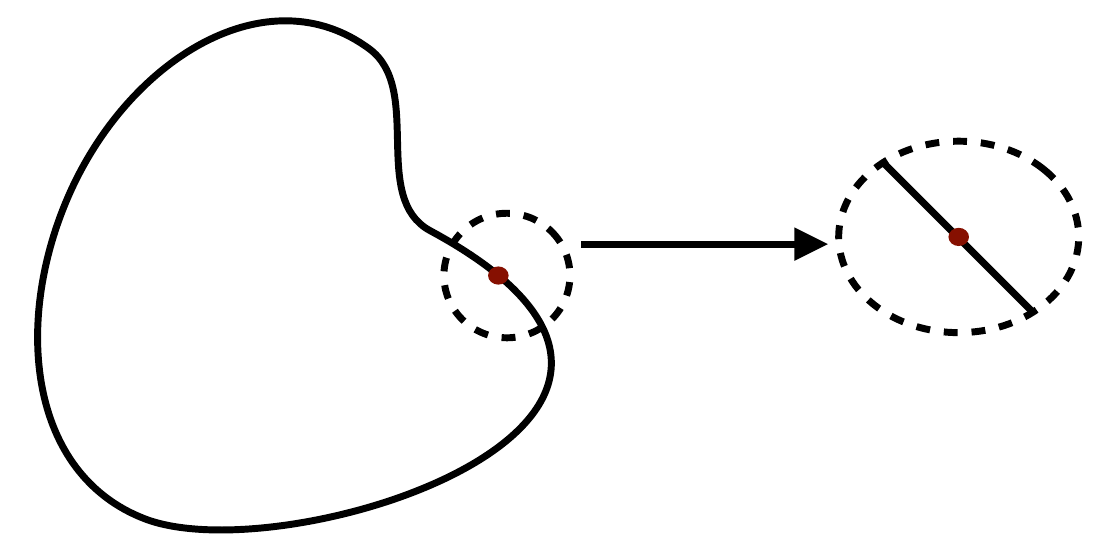}
\\ 
\includegraphics[scale=0.7]{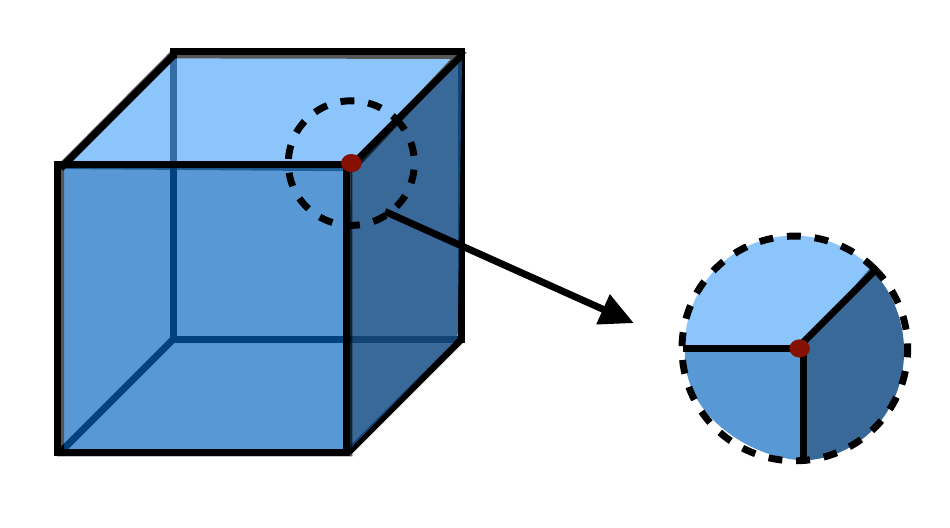}
\hspace{1cm}
\includegraphics[scale=0.7]{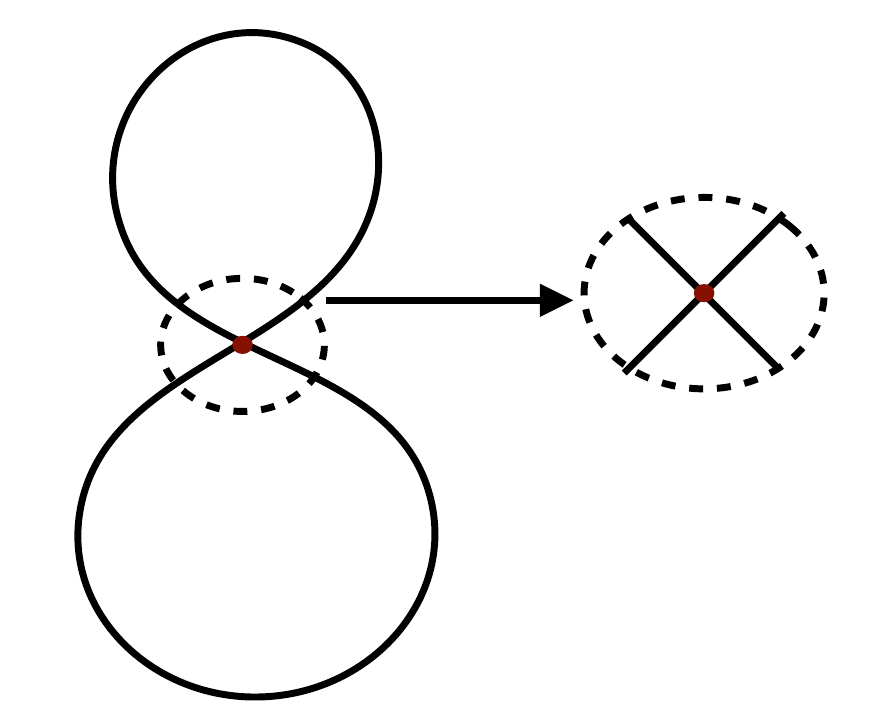}
\end{center}
\caption{The sphere (top left), the boundary of a cube (bottom left),  and the shape at the top right can all be considered manifolds since, in a small neighborhood around any point, they are all (topologically) Euclidean. The figure-eight (bottom right) is not a manifold since it is not Euclidean in any neighborhood of the point where the figure-eight self-intersects. 
\label{fig:manifold_examples}}
\end{figure}

\begin{example}[{\bf The Euclidean space}, $\R^n$]
$\R^n$ with the standard topology is a Euclidean space.
For any point $p\in \R^n$ and any open neighborhood of $p$, $U_p$, the homeomorphism $\phi_p$ is the identity map.
In the remainder, whenever we discuss $\R^n$, we use these charts.
\end{example}

\begin{example}[{\bf The positive definite cone}, $\Sn^n_{++}$]
The set of $n \times n$ real symmetric matrices, $\Sn^n$, with the standard topology is homeomorphic to $\R^{n(n+1)/2}$.
Let $\{E_{ij}\}_{i,j=1}^n$ be the standard basis elements for $n\times n$ matrices and $\{e_i\}_{i=1}^{n(n+1)/2}$ be the standard basis elements for $\R^{n(n+1)/2}$.
Let $\sigma$ be a bijective mapping from $\Set{(i, j)}{1\leq i\leq j\leq n}$ to $\{1,\hdots,n(n+1)/2\}$.
For convenience, let us also set $\sigma((i,j)):=\sigma((j,i))$ if $i>j$.
Then, the homeomorphism between $\Sn^n$ and $\R^{n(n+1)/2}$ is given by 
$$\phi\left(\sum_{i,j=1}^n \alpha_{ij}E_{ij}\right) = \sum_{i,j=1}^n \alpha_{ij}e_{\sigma((i,j))}.$$
For any point $p\in \Sn^n_{++}$ and open neighborhood of $p$, $U_p$, the homeomorphism $\phi_p$ is the restriction of $\phi$ to $U_p$.
In the remainder, whenever we discuss $\Sn^n_{++}$, we use these charts.
\end{example}

\subsection{Differentiable manifolds}

\noindent
In the most general sense, a  manifold is an abstract geometric object.
Although it has interesting topological properties by itself, in various applications it becomes important  to be able to do calculus over manifolds, just as in the Euclidean space.
The first step towards doing that is the requirement that the different charts be consistent.
If we have two charts corresponding to two neighborhood of a point $p$, then these charts should agree on the region they intersect.
We can think of this as a having two different maps for the same region and 
if we want to go from one place to another place that is covered by both maps, then it should not matter which map we are using.
Furthermore, if we are following a smooth path in one of these maps, then the corresponding path in the other map should also be smooth.
We formalize this intuition by defining differentiable manifolds.

\begin{definition}[\bf Differentiable manifold]
Let ${M}$ be a manifold,  and let $U_{\alpha}$ and $U_{\beta}$ be two open neighborhoods of $p\in {M}$ with charts $(U_\alpha,\,\psi_\alpha)$, $(U_\beta,\,\psi_\beta)$.
There is a natural bijection $\psi_{\alpha\beta} = \psi_\beta \circ \psi_\alpha^{-1}$ between $\psi_{\alpha}(U_{\alpha}\cap U_{\beta})$ and $\psi_{\beta}(U_{\alpha}\cap U_{\beta})$ called the transition map.
The manifold ${M}$ is called a \emph{differentiable manifold} if all transition maps are differentiable.
More generally, if all transition functions are $k$-times differentiable, then ${M}$ is a $C^k$-manifold.
If all transition functions have derivatives of all orders, then ${M}$ is  said to be a $C^\infty$ manifold or \emph{smooth manifold}.
\label{def:differentiable-manifold}
\end{definition}

\begin{figure}[!htb]
\centering
\includegraphics[keepaspectratio, width=0.7\textwidth]{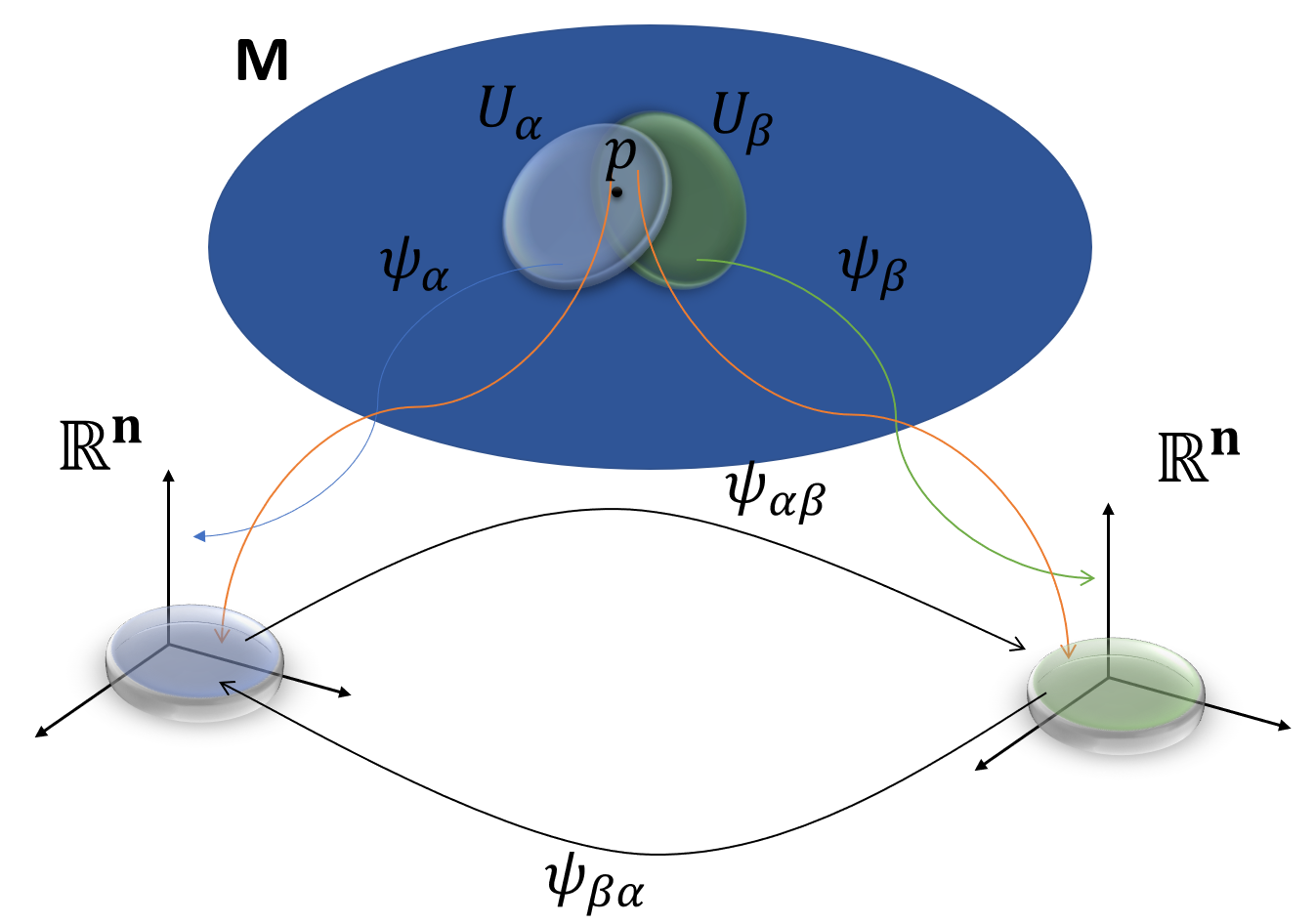}
\caption{A representation of transition maps.}
\label{fig:dif-manifold}
\end{figure}

\noindent
One of the main reasons we have defined differentiable manifolds is that we would like to move over the manifold.
However, when we are moving over the manifold we cannot leave it.
Depending on the manifold there can be some possible directions that leave the manifold.
For example, if we consider $d$-dimensional manifold $M\subseteq \R^n$ with $d<n$, then for any $p\in M$, there should be at least one direction that leaves the manifold.
If no such direction exists, then $M$ locally behaves like $\R^n$ and so it should be $n$-dimensional. 
We want to characterize possible directions that allows us to stay in the manifold after instantaneous movement in that direction.

\begin{definition}[\bf Tangent space]
Let $M$ be a $d$-dimensional differentiable manifold and $p$ be a point in $M$.
By definition, there exists a chart around $p$, $(U_p, \phi_p)$.
The \emph{tangent space} at $p$, $T_p M$ is the collection of directional derivatives of $f \circ \phi_p^{-1}:\phi_p(U_p)\to \R$ for any differentiable function $f:\R^d\to \R$.
The collection of $T_p M$ for $p$ in $M$ is called the \emph{tangent bundle} and denoted by $TM$.
\label{def:tangent-space}
\end{definition}

\noindent
One can ask how can we compute the directional derivatives of $f \circ \phi_p^{-1}$.
Observe that $V_p:=\phi_p(U_p)$ is a subset of $\R^d$ by the definition of charts.
Subsequently, note that $f \circ \phi_p^{-1}$ is simply a real-valued differentiable multivariable function.
Thus, it can be computed using standard multivariable calculus techniques.

Another important point is that we have defined a tangent space with respect to charts.
Hence, we need to ensure that different selections of charts lead to the same definition.
This can be seen by noting the consistency provided by the  manifold being a differentiable manifold.
When we do calculus over differentiable manifolds, often, we need to sum elements from a tangent space.
This makes sense only if a tangent space is actually a vector space.
\begin{proposition}[\bf The tangent space is a vector space]
Let $M$ be a differentiable manifold, and $p\in M$.
Then, $T_p M$ is a vector space. 
\end{proposition}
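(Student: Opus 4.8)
The plan is to realize $T_p M$ as a linear subspace of a space of operators by first pinning down what a tangent vector \emph{is} and then checking that the natural operations keep us inside $T_p M$. Fix a chart $(U_p,\phi_p)$ around $p$, and write $q := \phi_p(p)$ and $V_p := \phi_p(U_p)\subseteq\R^d$, which is open by the definition of a chart. By \cref{def:tangent-space}, each tangent vector is the directional derivative of a coordinate representation $g := f\circ\phi_p^{-1}\colon V_p\to\R$ in some direction; since $g$ is an ordinary differentiable function on an open subset of $\R^d$, its directional derivative at $q$ along $v\in\R^d$ is $D_v g(q) = \langle\nabla g(q),v\rangle$. I therefore interpret a tangent vector as the operator $\partial_v$ determined by a direction $v\in\R^d$, acting on differentiable functions via $f\mapsto D_v(f\circ\phi_p^{-1})(q)$, so that $T_p M = \{\partial_v : v\in\R^d\}$.

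Next I would place all such operators inside the real vector space of maps from differentiable functions to $\R$, equipped with pointwise addition and scalar multiplication, namely $(\partial_v+\partial_w)(f) := \partial_v(f)+\partial_w(f)$ and $(\lambda\partial_v)(f) := \lambda\,\partial_v(f)$. With this ambient structure, all the vector space axioms (associativity, commutativity, the zero operator, additive inverses, and the distributive laws) are inherited verbatim from the corresponding identities in $\R$. Consequently the only thing requiring genuine proof is that $T_p M$ is \emph{closed} under these operations: that the sum of two tangent vectors, and any scalar multiple of one, is again a directional-derivative operator of the prescribed form.

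Closure is exactly where the linearity of the directional derivative in its direction argument does the work. Using $D_v g(q)=\langle\nabla g(q),v\rangle$ with $\nabla g(q)$ fixed, I would verify the identities $\partial_v+\partial_w=\partial_{v+w}$ and $\lambda\partial_v=\partial_{\lambda v}$ for all $v,w\in\R^d$ and $\lambda\in\R$; both follow immediately from the bilinearity of $\langle\cdot,\cdot\rangle$. Since $v+w$ and $\lambda v$ again lie in $\R^d$, the resulting operators belong to $T_p M$, which establishes closure and hence that $T_p M$ is a vector space.

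Finally I would upgrade this to the sharper statement that $v\mapsto\partial_v$ is a linear isomorphism $\R^d\to T_p M$. Surjectivity holds by definition, the closure identities above are precisely linearity, and for injectivity I would feed the operators functions $f$ whose coordinate representation $f\circ\phi_p^{-1}$ is the $i$-th coordinate projection on $V_p$ (for instance the $i$-th component of the chart map itself), which yields $\partial_v(f)=v_i$ and thus recovers $v$ from $\partial_v$. The one subtlety worth flagging is independence of the chart: passing to another chart replaces $v$ by its image under the invertible Jacobian of the transition map $\psi_{\alpha\beta}$, which is a diffeomorphism because $M$ is a differentiable manifold, so the vector space structure is preserved; this is the consistency already noted after \cref{def:tangent-space} and need not be reproved. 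I expect the vector space axioms to be purely mechanical, so the only real content is the closure step, which reduces to linearity of the gradient pairing.
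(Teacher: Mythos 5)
Your proof is correct, but it takes a genuinely different route from the paper's. The paper's proof parametrizes tangent vectors by the \emph{function}: if $u$ arises from $f\circ\phi_p^{-1}$ and $v$ from $f'\circ\phi_p^{-1}$, closure follows from linearity of differentiation in the function argument, since $cu$ arises from $(cf)\circ\phi_p^{-1}$ and $u+v$ from $(f+f')\circ\phi_p^{-1}$; the direction of differentiation is left implicit and no ambient vector space is spelled out. You instead parametrize tangent vectors by the \emph{direction} $v\in\R^d$, embed $T_pM$ in the space of real-valued maps on differentiable functions with pointwise operations, and obtain closure from linearity of $D_vg(q)=\inner{\nabla g(q),v}$ in $v$, i.e.\ $\partial_v+\partial_w=\partial_{v+w}$ and $\lambda\partial_v=\partial_{\lambda v}$. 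Both are legitimate readings of the informal \cref{def:tangent-space}, but your route buys more: exhibiting $v\mapsto\partial_v$ as a linear isomorphism (with injectivity via coordinate projections) additionally recovers $\dim T_pM=d$ and the identifications $T_p\R^n=\R^n$ and $T_P\Sn^n_{++}\cong\Sn^n$ that the paper's examples assert separately, and your explicit chart-independence remark via the Jacobian of the transition map discharges the consistency issue the paper only gestures at. A further small advantage: read literally, the paper's addition step sums directional derivatives of $f\circ\phi_p^{-1}$ and $f'\circ\phi_p^{-1}$ taken in possibly different directions, which only makes sense under an implicit identification; your bookkeeping of directions makes this unambiguous. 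The paper's argument is shorter; yours is more complete and closer to the standard derivation-based treatment.
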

\begin{proof}
We need to verify two properties in order to show that $T_p M$ is a vector space.
If $u, v\in T_p M$ and $c\in \R$, then $cu \in T_p M$ and $u+v\in T_p M$.
\begin{itemize}
	\item If a vector $u\in T_p M$ is a directional derivative of $f\circ \phi_p^{-1}$ for some function $f$, then $cu$ is the directional derivative of $(cf)\circ \phi_p^{-1}$ for each $c\in \R$. Thus, $cu \in T_p M$.
	\item If another vector $v\in\R$ is a directional derivative of $f' \circ\phi_p^{-1}$ for some function $f'$, then $u+v$ is the directional derivative of $(f+f')\circ \phi_p^{-1}$. Thus $u+v\in T_p M$.
\end{itemize}
\end{proof}

\begin{figure}[!htb]
\centering
\includegraphics[keepaspectratio=true, width=0.5\textwidth]{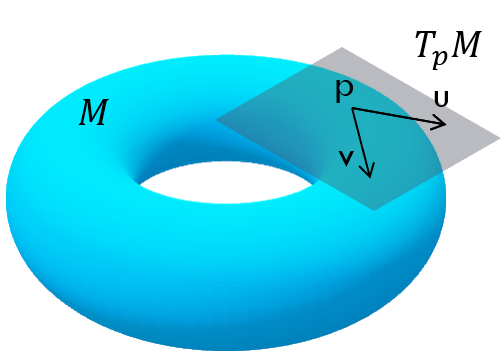}
\caption{The tangent space $T_pM$ to a  manifold $M$ at the point $p \in M$. \label{fig:tangent_space}}
\end{figure}

\begin{example}[\bf The Euclidean space, $\R^n$]
Let $p\in \R^n$ and $U_\alpha,U_\beta$ be two open neighborhoods of $p$ with charts $(U_\alpha, \phi_\alpha)$ and $(U_\beta, \phi_\beta)$.
As $\phi_\alpha$ and $\phi_\beta$ are identity maps, $\phi_{\alpha\beta}:=\phi_\beta\circ\phi_\alpha^{-1}$ is also an identity map on $U_\alpha\intersection U_\beta$.
Since an identity map is smooth, $\R^n$ is a smooth manifold.
The tangent space at $p\in\R^n$, $T_p M$, is $\R^n$ for each $p\in \R^n$.
We can verify this by considering functions $f_i(u):=\inner{e_i, u}$ where $e_i$ is the $i$th standard basis element for $\R^n$.
\end{example}

\begin{example}[{\bf The positive definite cone}, $\Sn^n_{++}$]
Let $P\in \Sn^n_{++}$ and $U_\alpha,U_\beta$ be two open neighborhoods of $p$ with charts $(U_\alpha, \phi_\alpha)$ and $(U_\beta, \phi_\beta)$.
As $\phi_\alpha$ and $\phi_\beta$ are restrictions of $\phi$ to $U_\alpha$ and $U_\beta$, $\phi_{\alpha\beta}:=\phi_\beta\circ\phi_\alpha^{-1}$ is also a restriction of $\phi$ to $U_\alpha\intersection U_\beta$.
Since $\phi$ is smooth its restrictions to open sets are smooth.
Thus, $\Sn^n_{++}$ is a smooth manifold.
The tangent space at $P\in\Sn^n_{++}$, $T_P M$, is $\R^{n(n+1)/2}$ which is homeomorphic to $\Sn^n$ for each $P\in \Sn^n_{++}$.
We can verify this by considering functions $f_i(u):=\inner{e_i, u}$ where $e_{i}$ is the $i$th standard basis element for $\R^{n(n+1)/2}$.
To simplify our calculations we take tangent spaces as $\Sn^n$, the set of all $n \times n$ symmetric matrices.
\end{example}

\noindent
So far,  we have defined a differential manifold and tangent spaces that consist of collection of directions we can follow to move around the manifold.
Now, we introduce vector fields to describe our movement over the manifold.

\begin{definition}[\bf Vector field]
A vector field $X$ over a $d$-dimensional differentiable manifold $M$ is an assignment of tangent vectors to points in $M$.
In other words, if $p\in M$, then $X(p)$ is a vector from $T_p M$.
If $f$ is a smooth real-valued function on $M$, then multiplication of $f$ with $X$ is another vector field $fX$ defined as follows, 
\begin{equ}
(f X) (p) := f(p) X(p).
\end{equ}
We say $X$ is a smooth vector field if, for any coordinate chart $\phi_p$ with basis elements $e_1,\hdots, e_d$ for $T_p M$, there are smooth functions $f_1,\hdots, f_d$ such that 
$$X(q)=\sum_{i=1}^d f_i(\phi_p(q))e_i$$ for points in the neighborhood of $p$.
\end{definition}

\begin{figure}[!htb]
\centering
\includegraphics[keepaspectratio, width=0.4\textwidth]{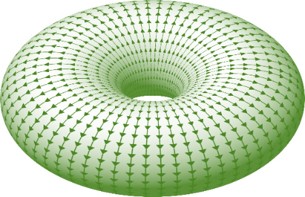} \hspace{1cm}
\includegraphics[keepaspectratio, width=0.4\textwidth]{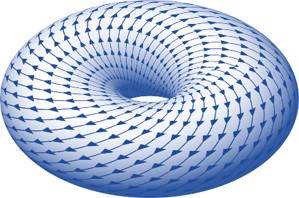}
\caption{Two different vector fields over a torus.}
\label{fig:vector-fields}
\end{figure}

\noindent
Let us consider the problem of moving from a point $p$ to another point $q$.
If $p$ and $q$ are not close to each other than there will be no chart covering both points.
Consequently, it is not clear in which direction we move from $p$ to reach $q$.
We use vector fields to provide a complete description of a path, or curve, from $p$ to $q$.
In a nutshell, a vector field tells us which direction we need to move given our current position.

Lastly, we define frame bundle: a tuple of ordered vector fields that forms an ordered basis for tangent spaces.
\begin{definition}[\bf Frame bundle]
Let $M$ be a $d$-dimensional smooth manifold.
A frame bundle on $M$, $\{\partial_i\}_{i=1}^d$ is an ordered tuple of $d$ smooth vector fields, such that for any $p\in M$, $\{\partial_i(p)\}_{i=1}^d$ form a basis for $T_p M$.
\label{def:frame-bundle}
\end{definition}

\noindent
We remark that, although we use the symbol $\partial_i$ which is the usual symbol for partial derivatives, frame bundles are different objects than partial derivatives.

\subsection{Riemannian manifolds}
Differential manifolds allow us to do calculus that is necessary for optimization.
However, as such, they lack quantitative notions such as distances and angles.
For example, when we do optimization and are interested in optimal solutions of the problem rather than optimal value, we need to approximate points.
Without a measure of distance, we cannot argue about approximating points.
Towards this end, we introduce Riemannian manifolds, differential manifolds on which one can measure  distances, lengths, angles etc.
Before we introduce how to measure distance on a manifold, let us recall how we measure distance on the Euclidean space.
Euclidean space is a metric space with metric $d:\R^n\times\R^n\to\R^n$, $d(p,q)=\norm{p-q}_2$.
This metric gives the distance between $p$ and $q$ which is also the length of the straight curve joining $p$ and $q$.
If we examine this metric carefully, we  notice that $d$ actually computes the norm of the $q-p$, the direction we follow to reach $q$.
This direction vector is a part of the tangent space at each point over the line segment joining $p$ to $q$.
As we saw earlier, this tangent space is simply $\R^n$, itself.
Thus, what we need is a way to define norms of vectors in a tangent space or, more generally, inner products between two vectors in a tangent space.
This is what the metric tensor does.

\begin{definition}[\bf Metric tensor]
Let $M\subseteq \R^n$ be a smooth manifold.
A \emph{metric tensor} over $M$ is a collection of functions $g_p:T_p M\times T_p M \to \R$ for $p\in M$ that satisfies following conditions,
\begin{enumerate}
	\item[i.] $g_p$ is a symmetric function: $g_p(u, v) = g_p(v, u)$ for all $u, v\in T_p M$,
	\item[ii.] $g_p$ is linear in the first argument: $g_p(cu+v,w) = cg_p(u,w)+g_p(v,w)$ for all $u, v, w\in T_p M$ and $c\in \R$, and
\end{enumerate}
\vspace{2mm}
$g$ is called \emph{non-degenerate}, if for a fixed $0\neq u\in T_p M$, $g_p(u, v)$ is not identically zero, in other words $\exists v\in T_p M$ such that $g_p(u, v)\neq 0$.

\vspace{2mm}
\noindent
$g$ is called \emph{positive definite}, if 
$$g_p(v,v)\geq 0$$ for $v\in T_p M$ and $g_p(v, v)= 0$ if and only if $v=0$.
\label{def:metric-tensor}
\end{definition}

\noindent
In the Euclidean space, the metric is the same for every tangent space, however, this is not required for  general manifolds.
This definition allows us to assign arbitrary metrics to individual tangent spaces.
But this is neither intuitive nor useful.
We know that tangent spaces change smoothly in a small neighborhood around a point.
As a result, tangent vectors also change smoothly in the same neighborhood.
Our expectation is that the distance between these tangent vectors also changes smoothly.
A class of metric tensors that satisfy this condition give rise to Riemannian manifolds.

\begin{definition}[\bf Riemannian manifold]
Let $M$ be a smooth manifold and $g$ be a metric tensor over $M$.
If for any open set $U$ and smooth vector fields $X$ and $Y$ over $U$, the function $g(X, Y)[p] := g_p(X_p, Y_p)$ is a smooth function of $p$, and $g$ is positive definite, then $(M, g)$ is called a \emph{Riemannian manifold}.
\label{def:riemannian-manifold}
\end{definition}

 \noindent
If we relax the positive definiteness condition with just non-degenerateness, then one gets what is called a pseudo-Riemannian manifold.
Pseudo-Riemannian manifolds arise  in the theory of relativity and, although we focus only on Riemannian manifolds, most of the results can be extended to pseudo-Riemannian manifolds.
One critical difference between pseudo-Riemannian manifolds and Riemannian manifolds is that the local norm of any tangent vector is non-negative in a Riemannian manifold, while local norm of some tangent vectors can be negative on a pseudo-Riemannian manifold. 

\begin{example}[{\bf The Euclidean space}, $\R^n$]
The usual metric tensor over $\R^n$ is $g_p(u, v):=\inner{u, v}$ for $x, u, v\in\R^n$.
$g_p$ is clearly a symmetric, bilinear, and positive definite function.
It is also non-degenerate as $\inner{u, v}=0$ for every $v$ implies $\inner{u, u}=0$ or equivalently $u=0$.
Next, we observe that $\R^n$ with $g$ is a Riemannian manifold.
This follows from the observation
\begin{equ}
g(X,Y)[p]:=g_p(X_p, Y_p) = \inner{X_p, Y_p}
\end{equ}
is a smooth function of $p$ as $X$ and $Y$ are smooth vector fields.
In the remainder, whenever we talk about $\R^n$ we use this metric tensor.
\end{example}

\begin{example}[{\bf The positive orthant}, $\R^n_{>0}$]
We consider the metric tensor induced by the Hessian of the log-barrier function: $-\sum_{i=1}^n \log(x_i)$.
The induced metric tensor is 
$$g_p(u, v):= \inner{P^{-1}u, P^{-1} v}$$
 where $P$ is a diagonal matrix whose entries is $p$ for $p\in\R^n_{>0}$ and $u,v\in\R^n$.
$g_p$ is clearly a symmetric,  bilinear, and positive definite function.
It is also non-degenerate as $\inner{P^{-1}u, P^{-1}v}=0$ for every $v$ implies $\inner{P^{-1}u, P^{-1}u}=0$ or equivalently $P^{-1}u=0$.
Since $P$ is a non-singular matrix, $P^{-1}u=0$ is equivalent to $u=0$.
Next, we observe that $\R^n_{>0}$ with $g$ is a Riemannian manifold.
This follows from the observation
\begin{equ}
g(X,Y)[p]:=g_p(X_p, Y_p) = \inner{P^{-1}X_p, P^{-1}Y_p}=\sum_{i=1}^n \frac{X_p^i Y_p^i}{p_i^2}
\end{equ}
is a smooth function of $p$ where $X_p:=\sum_{i=1}^nX_p^i e_i$ and $Y_p:=\sum_{i=1}^n Y_p^i e_i$ with $(e_i)_{i=1}^n$ standard basis elements of $\R^n$.
Here we used the fact that the tangent space of $\R^n_{>0}$ is $\R^n$.
Now, $X_p^i$, $Y_p^i$, and $p_i^{-2}$ are all smooth functions of $p$.
Thus their sum, $g(X, Y)[p]$ is a smooth function of $P$.
In the remainder of this note, whenever we talk about $\R^n_{>0}$ we use this metric tensor.
\end{example}

\begin{remark}[\bf Hessian manifold] 
In fact, when a metric arises as a Hessian of a convex function, as in the example above, it is called a {\em Hessian manifold}.
\end{remark}

\begin{example}[{\bf The positive definite cone}, $\Sn^n_{++}$]
We consider the metric tensor induced by the Hessian of the log-barrier function: $-\log\det(P)$ for a positive definite matrix $P$.
The induced metric tensor is 
$$g_P(U, V):= \tr[P^{-1}UP^{-1} V]$$
 for $P\in\Sn^n_{++}$ and $U,V\in\Sn^n$.
$g_P$ is clearly a symmetric, bilinear, and positive definite.
It is also non-degenerate as $\tr[P^{-1}UP^{-1} V]=0$ for every $V$ implies 
\begin{equ}
\tr[P^{-1}UP^{-1} U]=\tr[P^{-1/2}UP^{-1/2} P^{-1/2}UP^{-1/2}]=0
\end{equ}
 or equivalently $P^{-1/2}UP^{-1/2}=0$.
Since $P$ is a non-singular matrix, $P^{-1/2}UP^{-1/2}=0$ is equivalent to $U=0$.
Next, we observe that $\Sn^n_{++}$ with $g$ is a Riemannian manifold.
This follows from the observation
\begin{equ}
g(X,Y)[P]:=g_P(X_P, Y_P) = \tr[P^{-1}X_P P^{-1}Y_P]
\end{equ}
is a smooth function of $P$.
Our observation is based on $P^{-1}$, $X_P$, and $Y_P$ are all smooth functions of $P$.
Thus, their multiplication $P^{-1}X_P P^{-1} Y_P$ is a smooth function.
Finally, trace as a linear function is smooth.
Thus, $g(X,Y)[P]$ is a smooth function of $P$.
In the remainder of this note, whenever we talk about $\Sn^n_{++}$ we use this metric tensor.
\end{example}

\section{Differentiation on Manifolds}
\label{sec:differentiation-on-manifolds}
Consider the task of optimizing a  given real valued function $f$ over a manifold. 
In a Euclidean space, traditional methods such as gradient descent or Newton's method move from a point to another in the direction of the negative gradient of $f$ in order to minimize $f$.
The performance of such methods also depends on how the derivative of $f$ with respect to a direction  behaves.
A smooth manifold, on the other hand, while allows us to define smooth ``curves'', does not tell us how to differentiate functions with respect to vector fields or, more generally, how to differentiate a vector field with respect to another.
Differentiation inherently requires comparing the change in a function or vector value when we move from one point to another close by point (in a given direction), but the fact that at different points the charts are different poses a challenge in defining differentiation.
Additionally, it turns out that there is no unique way to do differentiation on a smooth manifold and a differential structure that satisfies some natural properties that we would expect is referred to as an ``affine connection''. 
In this section, we present the machinery from differential geometry that is required to understand how to  differentiate  over a differentiable manifold.
On Riemannian manifolds, if we impose additional constraints on the affine connection, compatibility with the metric tensor, then this gives rise to the Levi-Civita connection that is shown to be unique.
This section is important to define what it means for a curve on a manifold  to be a geodesic or a straight line which, in turn, is important to define geodesic convexity.
 
\subsection{Differentiating a function on a manifold}
Let us recall how does one differentiate real-valued functions on the Euclidean space.
Let $f:\R^n\to \R$ be a differentiable function.
The differential of $f$ at $x\in\R^n$, $df(x)$ is the unique vector that satisfies,
\begin{equ}[eq:usual-differentiation]
\lim_{h\to 0} \frac{f(x+h) - f(x) - \inner{df(x), h}}{\norm{h}} = 0
\end{equ}
The direction vector $h$ in~\eqref{eq:usual-differentiation} is an arbitrary vector in $\R^n$.
In the manifold setting, we cannot approach to given point from arbitrary directions.
The directions are limited to the tangent space at that point.
On the other hand, there exists a homeomorphism between the tangent space and some Euclidean space.
Thus, we can apply usual differentiation operator to functions over manifolds with slight modifications.

\begin{definition}[\bf Derivative of a function]
Let $M$ be a differentiable manifold, $p\in M$, $T_p M$ be the tangent space at $p$ and $(U_p, \phi_p)$ be a chart around $p$.
Let $f:M\to \R$ be a continuous function.
We say $f$ is differentiable at $p$, if there exists a linear operator $\lambda_p:T_p M \to \R$ such that
\begin{equ}[eq:manifold-differentiation]
	\lim_{h\to 0} \frac{f(\phi_p^{-1}(\phi_p(p)+h)) - f(p) - \lambda_p(h)}{\norm{h}} = 0,
\end{equ}
where $h$ varies over $T_p M$.
\end{definition}

\noindent
In the example manifolds we have discussed earlier, manifolds and their tangent spaces lie in a common Euclidean space.
Thus, we can drop the mappings $\phi_p$ from~\eqref{eq:manifold-differentiation}.
Consequently, differentiation on $\R^n$ and $\R^n_{>0}$ is simply the usual differentiation.\footnote{Note,  however, that this is not true if we were considering derivation along a geodesic or an arbitrary curve in the positive orthant example.}
On the other hand, we restrict vectors $h$ to $\Sn^n$ in~\eqref{eq:usual-differentiation} while differentiating a function on $\Sn^n_{++}$.

\subsection{Differentiation a function along a vector field on a manifold}
Differentiation along a vector field is a generalization of partial derivatives from multivariable calculus.
The vector field gives a direction that is tangent to the manifold at every point on the manifold.
Differentiating $f$ along the vector field $X$ is equivalent to computing directional derivatives of $f$ at every point in the direction given by $X$.

\begin{definition}[\bf Derivative of a function along a vector field]
Let $M$ be a differentiable manifold, $X$ be a vector field over $M$, $p\in M$, $T_p M$ be the tangent space at $p$ and $(U_p, \phi_p)$ be a chart around $p$.
Let $f:M\to \R$ be a differentiable function.
We define the derivative of $f$ along $X$ at $p$ by 
\begin{equ}[eq:derivative-along-vector-field]
X(f)(p) := \lim_{h\to 0}\frac{f(\phi_p^{-1}(\phi_p(p)+h X(p))) - f(p)}{h}.
\end{equ}
\end{definition}

\noindent
Similar to differentiation of a function on the manifold, we can drop the mappings $\phi_p$ from~\eqref{eq:derivative-along-vector-field} for the example manifolds we have discussed so far.
Thus, differentiation along a vector field on these manifolds is exactly same as usual directional derivativation.
The only difference is that, at any point, the vector field specifies the direction of the directional derivative.

\subsection{Affine connections and Christoffel symbols}
We saw how to differentiate a function over a manifold as well as how to differentiate a function along a given vector field.
However, both of these definitions depend on the local charts and do not tell us how to compare different curves between ``far away'' points.
In order to compare different curves, we need to be able to compare vector fields inducing these curves.
For that reason, we would like to measure the relative movement of a vector field $Y$ with respect to another vector field $X$.
The idea is very similar to the second order partial derivatives, where we measure the change in one component with respect to another component.
Consequently, if we denote the collection of vector fields over $M$ by $\mathfrak{X}(M)$, then we would like to define an operator  
$$\nabla:\mathfrak{X}(M)\times\mathfrak{X}(M)\rightarrow\mathfrak{X}(M)$$ called the affine connection so that $\nabla$ measures the relative movement between vector fields and behaves like a differential operator.
What we mean by behave like a differential operator is that it should be bilinear and it should satisfy Leibniz's rule.

\begin{definition}[\bf Affine connection]
Let $M$ be differentiable manifold and $\mathfrak{X}(M)$ be the collection vector spaces over $M$.
An operator $\nabla:\mathfrak{X}(M)\times \mathfrak{X}(M)\to\mathfrak{X}(M)$ is called \emph{affine connection} if it satisfies following conditions,
\begin{itemize}
	\item {\bf linearity in the first term}: for any smooth functions $f,f':M\to\R$ and $X,Y,Z\in\mathfrak{X}(M)$,
	\begin{equ}[eq:affine-1]
		\nabla_{fX+f'Y}Z = f\nabla_X Z + f'\nabla_Y Z.
	\end{equ}
	\item {\bf linearity in the second term}: for any $X, Y, Z\in\mathfrak{X}(M)$,
	\begin{equ}[eq:affine-2]
		\nabla_X (Y+Z) = \nabla_X Y + \nabla_X Z.
	\end{equ}
	\item {\bf Leibniz's rule}: for any smooth function $f:M\to\R$ and $X,Y\in\mathfrak{X}(M)$,
	\begin{equ}[eq:affine-3]
		\nabla_X (fY) = f\nabla_X Y + Y X(f).
	\end{equ}
\end{itemize}
\label{def:affine-connection}
\end{definition}

\noindent
Such mappings are called ``affine connections" because they allow one to describe tangent vectors of two different points, in some sense ``connecting" two points on the manifold.
An affine connection at a point $p$ is actually a linear operator from $T_p M\times T_p M$ to $T_p M$ and it can be represented with a $d\times d\times d$ tensor.
Christoffel symbols of the second kind correspond to the entries of these tensors.

\begin{definition}[\bf Christoffel symbols of the second kind]
Let $M$ be a $d$-dimensional differentiable manifold and $TM$ be its tangent bundle.
Let us fix one frame $\{\partial_i\}_{i=1}^d$ for $M$.
Given an affine connection $\nabla$, define  
\begin{equ}[eq:christoffel-symbol]
\Gamma_{ij}^k := (\nabla_{\partial_i} \partial_j)^k,
\end{equ}
the coefficient of $\nabla_{\partial_i} \partial_j$ in the direction of $\partial_k$.
The coefficient $\Gamma_{ij}^k$ is called a \textit{Christoffel symbol of the second kind}.
\label{def:christoffel}
\end{definition}

\begin{theorem}[\bf Christoffel symbols determine the affine connection]
The Christoffel symbols of the second kind determine the affine connection $\nabla$ uniquely.
\label{thm:christoffel-symbols}
\end{theorem}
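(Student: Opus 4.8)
The plan is to show that the three defining axioms of an affine connection force a single closed-form expression for $\nabla_X Y$ in terms of the Christoffel symbols together with coordinate data of $X$ and $Y$ that does not depend on $\nabla$; once such a formula is in hand, fixing the $\Gamma_{ij}^k$ leaves no remaining freedom. First I would fix the frame $\{\partial_i\}_{i=1}^d$ and express arbitrary smooth vector fields in it: since $\{\partial_i(p)\}_{i=1}^d$ is a basis of $T_p M$ at every $p$ by \cref{def:frame-bundle}, there are (smooth) functions $a^i, b^j : M \to \R$ with $X = \sum_i a^i \partial_i$ and $Y = \sum_j b^j \partial_j$.

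Next I would expand $\nabla_X Y$ by applying the axioms in order. Linearity in the first slot, \eqref{eq:affine-1}, which holds over smooth functions, pulls the $a^i$ out to give $\nabla_X Y = \sum_i a^i \nabla_{\partial_i} Y$. Additivity in the second slot, \eqref{eq:affine-2}, distributes the connection across the sum $\sum_j b^j \partial_j$. The crucial point is that the second slot is only $\R$-additive, \emph{not} function-linear, so I may not simply extract $b^j$; instead Leibniz's rule \eqref{eq:affine-3} applies to each summand, giving $\nabla_{\partial_i}(b^j \partial_j) = b^j \nabla_{\partial_i}\partial_j + \partial_j\,\partial_i(b^j)$. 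Substituting the definition $\nabla_{\partial_i}\partial_j = \sum_k \Gamma_{ij}^k \partial_k$ from \eqref{eq:christoffel-symbol} and collecting the coefficient of each $\partial_k$ yields
\begin{equ}
(\nabla_X Y)^k = \sum_{i,j} a^i b^j\, \Gamma_{ij}^k + \sum_i a^i\, \partial_i(b^k).
\end{equ}

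The right-hand side involves only the coordinate functions $a^i$ and $b^k$ of $X$ and $Y$, their frame-derivatives $\partial_i(b^k)$ (which are determined by $Y$ and the frame alone, independently of $\nabla$), and the Christoffel symbols. Hence if two affine connections $\nabla$ and $\widetilde{\nabla}$ share the same Christoffel symbols in the fixed frame, the formula gives $\nabla_X Y = \widetilde{\nabla}_X Y$ for every pair $X, Y$, which is exactly the asserted uniqueness.

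I expect the main subtlety to be the asymmetry between the two arguments. It is tempting to treat $\nabla$ as a genuine tensor and pull $b^j$ out of the second slot; the whole content of the proof is that this is illegal, because the second slot is a derivation. The Leibniz step therefore produces the inhomogeneous term $\sum_i a^i\,\partial_i(b^k)$, and recognizing both that this term appears and that it is connection-independent is what makes the argument close: the only ``new'' data carried by $\nabla$ beyond the ordinary differentiation of coefficient functions is precisely encapsulated in the $\Gamma_{ij}^k$. A minor verification to carry out along the way is that the coefficients $a^i$ are themselves smooth, which follows from smoothness of the vector fields and of the frame.
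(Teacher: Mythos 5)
Your proof is correct and follows essentially the same route as the paper's: expand $X$ and $Y$ in the fixed frame, apply linearity in the first slot, additivity in the second, and the Leibniz rule in sequence, and observe that the residual term $\sum_i a^i\,\partial_i(b^k)$ depends only on the vector fields and the frame, not on $\nabla$, so the $\Gamma_{ij}^k$ determine the connection. In fact your displayed coefficient formula $(\nabla_X Y)^k = \sum_{i,j} a^i b^j\,\Gamma_{ij}^k + \sum_i a^i\,\partial_i(b^k)$ is the corrected version of the paper's final line, which through a typo drops the factor $X^i Y^j$ in front of $\Gamma_{ij}^k$.
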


\begin{proof}
Let $X,Y$ be two vector fields on $M$ and $\{\partial_i\}_{i=1}^d$ be the frame bundle on $M$.
Let us write $X$ as $\sum_{i=1}^d X^i \partial_i$ and $Y$ as $\sum_{i=1}^d Y^i\partial_i$.
Then, $\nabla_X Y$ can be written as
\begin{align*}
\nabla_X Y 
	\stackrel{\eqref{eq:affine-1}}{=}& \sum_{i=1}^d X^i \nabla_{\partial_i} Y\\
	\stackrel{\eqref{eq:affine-2}}{=}& \sum_{i,j=1}^d X^i \nabla_{\partial_i} Y^j\partial_j\\
	\stackrel{\eqref{eq:affine-3}}{=}& \sum_{i,j=1}^d X^iY^j \nabla_{\partial_i} \partial_j + X^i \partial_j \partial_i(Y^j)\\
	=& \sum_{i,j,k=1}^d X^iY^j\Gamma_{ij}^k \partial_k + \sum_{i,j=1}^d X^i \partial_j \partial_i(Y^j)\\
	=& \sum_{k=1}^d \left(\sum_{i,j=1}^d \Gamma_{ij}^k + \sum_{i=1}^d X^i \partial_i(Y^k)\right)\partial_k.
\end{align*}
We note that $\sum_{i=1}^d X^i \partial_i(Y^k)$ neither depends on the affine connection nor the Christoffel symbols of the second kind for any $k=1,\hdots,d$.
It depends only to the vector fields $X$, $Y$, and the tangent spaces induced by the topology over the manifold.
Hence, given the Christoffel symbols of the second kind, there is unique affine connection corresponding to these symbols.
\end{proof}

\subsection{Derivative of a vector field along a smooth curve}
We already saw how to compute the derivative of a curve along a vector field as well how to differentiate a vector field with respect to another vector field.
Now, we discuss how to compute derivatives of a vector field along a curve.
A useful application of this is that one can move a tangent vector along the curve so that the tangent vector stays parallel with respect to the curve -- {\em parallel transport} -- that we do not discuss in this exposition.

Let us consider a smooth curve $\gamma:I\to M$ on differentiable manifold $M$ for an interval $I\subseteq \R$.
$\dot{\gamma}$ describes a tangent vector for each $\gamma(t)\in M$ for $t\in I$.
However, $\dot{\gamma}$ may not correspond to a vector field, in general.
A vector field needs to assign a unique tangent vector at any given point.
However, if a smooth curve, $\gamma:I\to M$, intersects by itself, then the curve defines two tangent vectors at the intersection.
Unless these tangent vectors are same, there cannot be a vector field $X$ such that 
$$\forall t\in I, \;\;  X(\gamma(t)) = \dot{\gamma}(t).$$
Although we cannot find  a vector field whose restriction to $\gamma$ is $\dot{\gamma}$ globally, we can still define differentiation with respect to $\gamma$ that is compatible with the affine connection $\nabla$.
The differentiation of a vector field $X$ with respect to a smooth curve $\gamma$ measures the change of $X$ along $\dot{\gamma}$  and we denote it by $\nabla_{\dot{\gamma}} X$.
\begin{definition}[\bf Covariant derivative]
Let $M$ be a differentiable manifold and $\nabla$ be an affine connection on $M$.
Let $\gamma:I\to M$ be a smooth curve for some interval $I\subseteq \R$.
We say $\nabla_{\dot{\gamma}}$ is a {\em covariant derivative} if for any vector fields $X$ and $Y$ on $M$, real constants $\alpha, \beta$ and smooth function $f:M\to\R$,
\begin{align}
\nabla_{\dot{\gamma}} (\alpha X + \beta Y) =& \alpha \nabla_{\dot{\gamma}} X + \beta \nabla_{\dot{\gamma}} Y\qquad(\text{$\R$-linearity})\label{eq:covariant-linear}\\
\nabla_{\dot{\gamma}} fX =& \dot{f}X + f\nabla_{\dot{\gamma}} X\qquad(\text{Leibniz rule})\label{eq:covariant-leibniz}
\end{align}
and if there exists a vector field $Z$ such that $Z(\gamma(t))=\dot{\gamma}(t)$, then
\begin{equ}[eq:covariant-extension]
(\nabla_Z X)(\gamma(t)) = (\nabla_{\dot{\gamma}} X)(t).
\end{equ}
\end{definition}

\noindent
If we fix the curve $\gamma$, then there exists a unique operator that satisfies conditions~\eqref{eq:covariant-linear} and~\eqref{eq:covariant-leibniz}.

\begin{proposition}[{\bf Uniqueness of the covariant derivative}]
Let $M$ be a differentiable manifold, $\nabla$ be an affine connection on $M$ and $\gamma:I\to M$ be a smooth curve for some interval $I\subseteq \R$.
There is a unique covariant derivative corresponding to $\gamma$, denoted by $\nabla_{\dot{\gamma}}$.
\label{prop:covariant-unique}
\end{proposition}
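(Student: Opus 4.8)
The plan is to show that the three defining properties \eqref{eq:covariant-linear}, \eqref{eq:covariant-leibniz} and \eqref{eq:covariant-extension} force a single explicit coordinate expression for $\nabla_{\dot{\gamma}}X$, built entirely from the fixed affine connection $\nabla$ and the curve $\gamma$, so that no freedom remains. Concretely I would take two operators $D$ and $D'$ that both satisfy the covariant-derivative axioms and prove that $D=D'$ by deriving the same formula for each.

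First I would work locally. Fix $t_0\in I$, a chart $(U_p,\phi_p)$ around $p=\gamma(t_0)$ with frame $\{\partial_i\}_{i=1}^d$, and a subinterval $J\ni t_0$ with $\gamma(J)\subseteq U_p$. On $J$ write $X\big(\gamma(t)\big)=\sum_{j=1}^d X^j(t)\,\partial_j\big(\gamma(t)\big)$ with each $X^j$ smooth in $t$, and decompose the velocity as $\dot{\gamma}(t)=\sum_{i=1}^d\dot{\gamma}^i(t)\,\partial_i\big(\gamma(t)\big)$. Applying $\R$-linearity \eqref{eq:covariant-linear} and the Leibniz rule \eqref{eq:covariant-leibniz} to each summand gives
\begin{equation*}
\nabla_{\dot{\gamma}}X=\sum_{j=1}^d \dot{X}^j\,\partial_j+\sum_{j=1}^d X^j\,\nabla_{\dot{\gamma}}\partial_j .
\end{equation*}
The first sum is independent of which admissible operator we use, so uniqueness reduces to pinning down $\nabla_{\dot{\gamma}}\partial_j$ for the frame fields.

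Next I would evaluate $\nabla_{\dot{\gamma}}\partial_j$ via the compatibility axiom \eqref{eq:covariant-extension}. Because $\partial_j$ is a globally defined vector field, whenever a field $Z$ with $Z(\gamma(t))=\dot{\gamma}(t)$ exists the axiom gives $\nabla_{\dot{\gamma}}\partial_j=(\nabla_Z\partial_j)\circ\gamma$; expanding $Z=\sum_i\dot{\gamma}^i\partial_i$, using linearity in the first argument \eqref{eq:affine-1}, and recalling \cref{def:christoffel} yields
\begin{equation*}
\nabla_{\dot{\gamma}}\partial_j=\sum_{i=1}^d\dot{\gamma}^i\,\nabla_{\partial_i}\partial_j=\sum_{i,k=1}^d\dot{\gamma}^i\,\Gamma_{ij}^k\,\partial_k .
\end{equation*}
Substituting this back produces the forced formula
\begin{equation*}
\nabla_{\dot{\gamma}}X=\sum_{k=1}^d\Big(\dot{X}^k+\sum_{i,j=1}^d\dot{\gamma}^i X^j\,\Gamma_{ij}^k\Big)\partial_k ,
\end{equation*}
whose right-hand side is determined solely by $\gamma$, by $X$ along $\gamma$, and by the Christoffel symbols of $\nabla$ (consistent with \cref{thm:christoffel-symbols}). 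As the expression is uniquely specified, $D$ and $D'$ must agree, giving uniqueness; the same formula also exhibits existence.

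The delicate step, and the one I expect to be the main obstacle, is the appeal to \eqref{eq:covariant-extension}: a self-intersecting curve, or one with $\dot{\gamma}$ vanishing somewhere, need not admit any vector field $Z$ that globally restricts to $\dot{\gamma}$, so the axiom does not apply verbatim. I would circumvent this by using that $\nabla$ depends on its first slot only pointwise -- a consequence of the function-linearity \eqref{eq:affine-1}, which shows $(\nabla_X Y)(p)$ is unchanged when $X$ is altered away from $p$. Passing to the difference $\Delta=D-D'$, the $\dot{f}$ terms in \eqref{eq:covariant-leibniz} cancel, so $\Delta$ is $C^\infty$-linear in its argument and hence tensorial; thus $(\Delta X)(t_0)=\sum_j X^j(t_0)\,(\Delta\partial_j)(t_0)$, and it suffices to show $(\Delta\partial_j)(t_0)=0$. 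For this I only need an extension of $\dot{\gamma}$ valid near $t_0$, which I can obtain on a neighborhood of $p$ after shrinking $J$ at times where $\dot{\gamma}\neq0$ and then extending to the remaining times by continuity of both sides in $t$; this converts the global extension hypothesis into the pointwise statement that is actually required.
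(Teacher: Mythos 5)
Your proposal is correct and takes essentially the same route as the paper's proof: expand $X$ and $\dot{\gamma}$ in a frame, apply $\R$-linearity \eqref{eq:covariant-linear}, the Leibniz rule \eqref{eq:covariant-leibniz}, the extension axiom \eqref{eq:covariant-extension}, and first-slot linearity \eqref{eq:affine-1} to force the formula $\nabla_{\dot{\gamma}}X=\sum_{k}\bigl(\dot{X}^k+\sum_{i,j}\dot{\gamma}_i X^j\Gamma_{ij}^k\bigr)\partial_k$, which settles uniqueness, with existence obtained by verifying that this formula satisfies the axioms. Your final paragraph is in fact more careful than the paper, whose proof invokes \eqref{eq:covariant-extension} with $Z=\sum_j\dot{\gamma}_j\partial_j$ without comment even though a global vector field restricting to $\dot{\gamma}$ need not exist (as the paper itself notes before the definition); your observation that the difference of two candidate operators is tensorial, combined with local extension of $\dot{\gamma}$ near points where it is nonzero and a continuity argument elsewhere, patches exactly that gap.
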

\begin{proof}
Let $\{\partial_i\}_{i=1}^d$ be a frame bundle for $M$.
We consider $\partial_i$ as a function of $t$, $\partial_i(t):=\partial_i(\gamma(t))$.
For a given vector field $X$, let us write $X:=\sum_{i=1}^d X^i\partial_i$ and $\dot{\gamma}:=\sum_{i=1}^d \dot{\gamma_i}\partial_i$.
Then,
\begin{align*}
\nabla_{\dot{\gamma}} X
	=& \nabla_{\dot{\gamma}} \sum_{i=1}^d X^i \partial_i\\
	\stackrel{\eqref{eq:covariant-linear}}{=}& \sum_{i=1}^d \nabla_{\dot{\gamma}} X^i \partial_i\\
	\stackrel{\eqref{eq:covariant-leibniz}}{=}& \sum_{i=1}^d \dot{X}^i \partial_i + X^i \nabla_{\dot{\gamma}}\partial_i\\
	\stackrel{\eqref{eq:covariant-extension}}{=}& \sum_{i=1}^d\dot{X}^i \partial_i +X^{i}\nabla_{\sum_{j=1}^d\dot{\gamma}_j \partial_j}\partial_i\\
	\stackrel{\eqref{eq:affine-1}}{=}& \sum_{i,j=1}^d\dot{X}^i \partial_i + \dot{\gamma}_j X^{i}\nabla_{\partial_j}\partial_i\\
	=&\sum_{k=1}^d\left(\dot{X}^k+\sum_{i,j=1}^d\dot{\gamma}_iX^j\Gamma_{ij}^k\right)\partial_k
\end{align*} 
Thus, given an affine connection, there can be at most one covariant derivative corresponding to $\gamma$.
On the other hand, given $\nabla$, $\gamma$ and $X$, if we define $\nabla_{\dot{\gamma}} X$ as
\begin{equ}
\sum_{k=1}^d\left(\dot{X}^k+\sum_{i,j=1}^d\dot{\gamma}_iX^j\Gamma_{ij}^k\right)\partial_k
\end{equ}
then it satisfies the conditions for the covariant derivative.
Linearity clearly holds, and if $f:M\to \R$ is a smooth function, then
\begin{align*}
\nabla_{\dot{\gamma}} fX 
	=&	\sum_{k=1}^d\left(\left(\frac{d}{dt}fX^k\right)+\sum_{i,j=1}^d\dot{\gamma}_i(fX)^j\Gamma_{ij}^k\right)\partial_k\\
	=&	\sum_{k=1}^d\left(\dot{f}X^k+f\dot{X}^k+f\sum_{i,j=1}^d\dot{\gamma}_iX^j\Gamma_{ij}^k\right)\partial_k\\
	=&	\dot{f}X + f\nabla_{\dot{\gamma}} X. 
\end{align*}
Hence, covariant derivative exists.
\end{proof}

\subsection{The Levi-Civita connection}
Affine connections are very generic and there can be many of them.
In fact, for any $d\times d\times d$ tensor, there is a corresponding affine connection as stated in~\cref{thm:christoffel-symbols}.
In general, we do not want to consider every affine connection but those with ``nice'' properties.
For a Riemannian manifold, ``compatibility'' with the associated metric is one ``nice'' property we can ask.
By compatibility we mean the inner product between two vectors in tangent spaces should not change when we move around the manifold.
If we require the connection to satisfy another property called ``torsion-freeness'', then there is a unique connection that satisfies both properties.
This connection is called the Levi-Civita connection. 

We define ``torsion-freeness'' using Lie Bracket of vector fields.

\begin{definition}[\bf Lie Bracket]
Let $X$ and $Y$ be vector fields defined over a $d$-dimensional differentiable manifold $M$.
Let us fix a frame $\{\partial_i\}_{i=1}^d$ for $M$ and let us write $X=\sum_{i=1}^d X^i \partial_i$, $Y=\sum_{i=1}^d Y^i \partial_i$.
The Lie Bracket of $X$ and $Y$, $[X,Y]$ is
\begin{equ}
\;[X,Y] = \sum_{i,j=1}^d  (X^j \partial_j Y^i - Y^j \partial_j X^i) \partial_i.
\end{equ}
\label{def:lie-bracket}
\end{definition}

\noindent
Thus, we can think of a Lie Bracket as a differential operator induced by two smooth vector fields.
Given a real-valued smooth function $f$, and vector fields $X, Y$, we apply Lie bracket to $f$ as follows,
\begin{equ}
\;[X,Y](f):=\sum_{i,j=1}^d  X^j \partial_j (Y^i\partial_i(f)) - Y^j \partial_j(X^i \partial_i(f)).
\end{equ}
Linearity of this operator is trivial.
And with a quick calculation one can show that $[X,Y]$ satisfies the Leibniz rule, in other words for real-valued smooth functions $f$ and $g$,
\begin{equ}
\;[X,Y](fg)=f[X,Y](g)+[X,Y](f)g.
\end{equ}

\noindent
The torsion-free property requires that for every vector field pair $X$ and $Y$, the difference between the vector fields induced by derivation of $Y$ along $X$ and derivation of $X$ along $Y$ is equal to the differential operator induced by these vector fields.
Now, we can define the Levi-Civita connection formally.

\begin{definition}[\bf Levi-Civita connection]
Let $(M, g)$ be a Riemannian manifold.
An affine connection on $M$ is called Levi-Civita connection if it is torsion-free
\begin{equ}[eq:property1]
\forall X, Y\in\mathfrak{X}(M),\;\nabla_X Y - \nabla_Y X = [X,Y],
\end{equ}
and it is compatible with the metric $g$,
\begin{equ}[eq:property2]
\forall X,Y, Z \in \mathfrak{X}({M}), \;X(g(Y,Z)) = g(\nabla_X Y, Z) + g(Y, \nabla_X Z).
\end{equ}
\end{definition}

\noindent
The definition of Levi-Civita connection states that it is any affine connection that is both torsion-free and compatible with the metric.
From this definition, it is hard to see if Levi-Civita connection unique or even exists.
The fundamental theorem of Riemannian geometry asserts that Levi-Civita connection exists and unique.
This theorem also shows that the Christoffel symbols of the second kind, and hence the Levi-Civita connection, are completely determined by the metric tensor.
This property is useful in deriving expressions for geodesics in several cases.
In the scope of this exposition, we focus on the uniqueness only.
We present one Levi-Civita connection for the examples we have discussed which is the only Levi-Civita connection by the theorem.
The proof of this theorem appears in~\cref{sec:levi-civita-proof}.

\begin{theorem}[{\bf Fundamental Theorem of Riemannian Geometry}]
Let $(M,g)$ be a $d$-dimensional Riemannian manifold.  
The Levi-Civita connection on $(M, g)$ is exist and unique.
Furthermore, its Christoffel symbols can be computed in terms of $g$.
Let 
$$g_{ij}:=g(\partial_i, \partial_j),$$ $G$ be the $d\times d$ matrix whose entries are $g_{ij}$, and $g^{ij}$ be the $ij$th entry of $G^{-1}$.
The Christoffel symbols of Levi-Civita connection is given by following formula,
\begin{equ}
\Gamma_{ij}^k:=\frac{1}{2} \sum_{l=1}^d g^{kl}(\partial_i g_{jl} + \partial_{j} g_{il} - \partial_{l} g_{ij})
\end{equ}
for $i,j,k=1,\hdots,d$.
\label{thm:fundamental}
\end{theorem}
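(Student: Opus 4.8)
The plan is to establish all three assertions — uniqueness, existence, and the explicit Christoffel formula — through a single algebraic device, the \emph{Koszul formula}, which extracts $g(\nabla_X Y, Z)$ purely in terms of the metric and Lie brackets. Uniqueness and the formula then drop out immediately, and existence reduces to a somewhat delicate verification that the object so defined really is a metric-compatible, torsion-free affine connection.

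First I would derive the Koszul formula. Write the metric-compatibility identity \eqref{eq:property2} for the three cyclic orderings of $X, Y, Z$:
\begin{align*}
X(g(Y,Z)) &= g(\nabla_X Y, Z) + g(Y, \nabla_X Z),\\
Y(g(Z,X)) &= g(\nabla_Y Z, X) + g(Z, \nabla_Y X),\\
Z(g(X,Y)) &= g(\nabla_Z X, Y) + g(X, \nabla_Z Y).
\end{align*}
Adding the first two and subtracting the third, then repeatedly substituting the torsion-free identity \eqref{eq:property1} in the form $\nabla_A B = \nabla_B A + [A,B]$ to collapse the six connection terms, and using the symmetry of $g$, all but a single surviving $\nabla$-term cancels in pairs, leaving
\begin{equ}
2\,g(\nabla_X Y, Z) = X(g(Y,Z)) + Y(g(Z,X)) - Z(g(X,Y)) + g([X,Y],Z) - g([X,Z],Y) - g([Y,Z],X).
\end{equ}
The right-hand side involves only the metric, the vector fields, and their Lie brackets — no undetermined connection data — so since $g$ is positive definite, hence non-degenerate, knowing $g(\nabla_X Y, Z)$ for every $Z$ pins down $\nabla_X Y$ uniquely. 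This proves uniqueness.

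For the Christoffel formula, specialize the Koszul identity to frame fields $X = \partial_i$, $Y = \partial_j$, $Z = \partial_l$. Since $[\partial_i, \partial_j] = 0$ (immediate from \cref{def:lie-bracket}, the frame components being constant), the three bracket terms vanish and one obtains $2\,g(\nabla_{\partial_i}\partial_j, \partial_l) = \partial_i g_{jl} + \partial_j g_{il} - \partial_l g_{ij}$. Expanding $\nabla_{\partial_i}\partial_j = \sum_m \Gamma_{ij}^m \partial_m$ gives $g(\nabla_{\partial_i}\partial_j, \partial_l) = \sum_m \Gamma_{ij}^m g_{ml}$; contracting both sides with $g^{lk}$ (summed over $l$) and using $\sum_l g_{ml}g^{lk} = \delta_m^k$ isolates $\Gamma_{ij}^k$ and yields exactly the claimed formula.

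Existence is the step I expect to be the main obstacle. The clean route is to turn the argument around: define $\Gamma_{ij}^k$ by the stated formula and invoke \cref{thm:christoffel-symbols} to obtain a well-defined affine connection $\nabla$; then I must verify the two defining properties. Torsion-freeness is the easier half: the formula is manifestly symmetric in $i$ and $j$ (using $g_{ij} = g_{ji}$), so $\nabla_{\partial_i}\partial_j = \nabla_{\partial_j}\partial_i$, and since $[\partial_i,\partial_j]=0$ the torsion $\nabla_X Y - \nabla_Y X - [X,Y]$ vanishes on frame fields; because torsion is $C^\infty(M)$-bilinear (tensorial), it then vanishes for all $X,Y$. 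Metric compatibility is the harder half: I would substitute the formula into $g(\nabla_{\partial_i}\partial_j,\partial_l) + g(\partial_j, \nabla_{\partial_i}\partial_l)$ and check, by a direct cancellation, that it equals $\partial_i g_{jl} = \partial_i\,g(\partial_j,\partial_l)$, and then extend this frame-wise identity to arbitrary smooth $X,Y,Z$ using bilinearity and the Leibniz rule \eqref{eq:affine-3}. The real care lies here: one must confirm that both the compatibility defect and the torsion are genuinely tensorial, so that checking them on the frame suffices, and that the extension correctly accounts for the $YX(f)$ correction term in \eqref{eq:affine-3}. This bookkeeping, rather than any deep idea, is where the proof is most easily derailed.
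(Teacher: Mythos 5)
Your proposal is correct, and for the parts the paper actually proves it follows the same route: derive the Koszul formula from the three cyclic applications of metric compatibility \eqref{eq:property2} combined with torsion-freeness \eqref{eq:property1}, conclude uniqueness from non-degeneracy of $g$, then specialize to frame fields (where $[\partial_i,\partial_j]=0$ is immediate under the paper's component-wise definition of the Lie bracket, \cref{def:lie-bracket}) and contract with $g^{kl}$ to isolate $\Gamma_{ij}^k$ --- this is exactly the computation in the paper's appendix, with the same signs in the Koszul identity. The genuine difference is existence: the paper explicitly declines to prove it (``we focus on the uniqueness only''), and its appendix establishes only uniqueness and the Christoffel formula, whereas you sketch the standard completion --- define $\Gamma_{ij}^k$ by the stated formula, invoke \cref{thm:christoffel-symbols} to obtain an affine connection, get torsion-freeness from the manifest $i\leftrightarrow j$ symmetry of the formula together with $[\partial_i,\partial_j]=0$, and verify compatibility on the frame via the cancellation
\[
g(\nabla_{\partial_i}\partial_j,\partial_l)+g(\partial_j,\nabla_{\partial_i}\partial_l)
=\tfrac{1}{2}\left(\partial_i g_{jl}+\partial_j g_{il}-\partial_l g_{ij}\right)
+\tfrac{1}{2}\left(\partial_i g_{lj}+\partial_l g_{ij}-\partial_j g_{il}\right)
=\partial_i g_{jl},
\]
extending to arbitrary $X,Y,Z$ by tensoriality of the torsion and of the compatibility defect. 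That bookkeeping is sound: both quantities are indeed $C^\infty(M)$-linear in each slot (the $YX(f)$ term from the Leibniz rule \eqref{eq:affine-3} cancels against the corresponding correction in $[fX,Y]=f[X,Y]-Y(f)X$, and similarly for the defect), so checking on the frame suffices. In short, your proof matches the paper's where they overlap and is strictly more complete, since the theorem as stated asserts existence while the paper's proof does not address it.
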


\begin{example}[{\bf The Euclidean space}, $\R^n$] 
Let us fix $\partial_i(p)$ as $e_i$, $i$th standard basis element of $\R^n$.
We can do this as $T_p M$ is $\R^n$ for any $p\in  \R^n$.
Also, $g_{ij}(p):=g_p(e_i, e_j)=\delta_{ij}$ where $\delta$ is Kronecker delta.
Since $g_{ij}$ is a constant function $\partial_k g_{ij}$, the directional derivative of $g_{ij}$ along $e_k$ is 0.
Consequently, $\Gamma_{ij}^k=0$ for any $i,j,k=1,\hdots,n$.
\label{ex:Rn-christoffel}
\end{example}

\begin{example}[{\bf The positive orthant}, $\R^n_{>0}$]
Let us fix $\partial_i(p)$ as $e_i$, $i$th standard basis element of $\R^n$.
We can do this as $T_p M$ is $\R^n$ for any $p\in  \R^n$.
Also, $g_{ij}(p):=g_p(e_i, e_j)=\delta_{ij}p_{i}^{-2}$ where $\delta$ is Kronecker delta.
Consequently, $G(p) = P^{-2}$ and $G(p)^{-1} =P^2$.
Equivalently $g^{ij}(p)=\delta_{ij} p_i^2$.
The directional derivative of $g_{ij}$ along $e_k$ at $p$, $\partial_k g_{ij}(p)$ is $-2\delta_{ij}\delta_{ik}p_i^{-3}$.
Consequently, $\Gamma_{ii}^i(p)=p_i^{-1}$ for $i=1,\hdots, n$ and $\Gamma_{ij}^k=0$ if all three $i,j$, and $k$ are not same.
\label{ex:po-christoffel}
\end{example}

\section{Geodesics}
\label{sec:geodesic}

\subsection{Geodesics as straight lines on a manifold}
Geometrically, geodesics are  curves whose tangent vectors remain parallel to the curve with respect to the affine connection.

\begin{definition}[\bf Geodesics on a differentiable manifold]
Let $M$ be a differentiable manifold and $\nabla$ be an affine connection on $M$.
Let $\gamma:I\to M$ be a smooth curve on $M$ where $I\subseteq \R$ is an interval.
$\gamma$ is a geodesic on $M$ if 
\begin{equ}
\nabla_{\dot{\gamma}} \dot{\gamma} = 0
\end{equ}
where $\dot{\gamma}:=\frac{d \gamma(t)}{dt}$.
\end{definition}

\noindent
We already saw that one can specify an affine connection in terms of its Christoffel symbols.
Similarly, we can also describe geodesics using Christoffel symbols.
This characterization is useful when we want to compute geodesics when the affine connection is given in terms of its Christoffel symbols.
An important example is the Levi-Civita connection whose Christoffel symbols can be derived from the metric tensor.
\begin{proposition}[\bf Geodesic equations in terms of Christoffel symbols]
Let $M$ be a $d$-dimensional differentiable manifold, $\{\partial_i\}_{i=1}^d$ be a frame of $M$, $\nabla$ be an affine connection on $M$ with Christoffel symbols $\Gamma_{ij}^k$ for $i,j,k=1,\hdots,d$.
If $\gamma$ is a geodesic on $M$ with respect to $\nabla$ and $\dot{\gamma}=\sum_{i=1}^d\dot{\gamma}_i\partial_i$, then $\gamma$ satisfies the following differential equation,
\begin{equ}[eq:geodesic-diff-form]
\sum_{k=1}^d \left(\sum_{i,j=1}^d \dot{\gamma}_i\dot{\gamma}_j\Gamma_{ij}^k + \ddot{\gamma}_k \right)\partial_k = 0.
\end{equ}
\label{prop:geodesic-christoffel}
\end{proposition}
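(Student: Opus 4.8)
The plan is to reduce everything to the explicit coordinate formula for the covariant derivative already established in \cref{prop:covariant-unique}, and then simply specialize it to the case where the vector field being differentiated is $\dot{\gamma}$ itself. By definition, $\gamma$ is a geodesic precisely when $\nabla_{\dot{\gamma}}\dot{\gamma}=0$, so the entire task is to expand the left-hand side of this equation in the chosen frame $\{\partial_i\}_{i=1}^d$ and read off the coefficient of each $\partial_k$.

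First I would recall the formula proved in \cref{prop:covariant-unique}: for any vector field $X=\sum_{i=1}^d X^i\partial_i$ along $\gamma$,
\begin{equ}
\nabla_{\dot{\gamma}} X = \sum_{k=1}^d\left(\dot{X}^k+\sum_{i,j=1}^d\dot{\gamma}_iX^j\Gamma_{ij}^k\right)\partial_k.
\end{equ}
The key observation is that the derivation of this identity only used the defining properties of the covariant derivative ($\R$-linearity, the Leibniz rule, and the compatibility with the affine connection through the Christoffel symbols), so it applies verbatim to vector fields defined merely along the curve $\gamma$, which is exactly the regime in which $\dot{\gamma}$ lives. I would then substitute $X=\dot{\gamma}$, i.e. $X^i=\dot{\gamma}_i$, which forces $\dot{X}^k=\ddot{\gamma}_k$, obtaining
\begin{equ}
\nabla_{\dot{\gamma}} \dot{\gamma} = \sum_{k=1}^d\left(\ddot{\gamma}_k+\sum_{i,j=1}^d\dot{\gamma}_i\dot{\gamma}_j\Gamma_{ij}^k\right)\partial_k.
\end{equ}
Setting this equal to zero, as required by the geodesic condition, yields precisely \eqref{eq:geodesic-diff-form}.

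The only point that needs care — and what I would flag as the main obstacle, though it is conceptual rather than computational — is justifying that $\dot{\gamma}$ is a legitimate argument for the formula of \cref{prop:covariant-unique}. As noted in the discussion preceding the definition of the covariant derivative, $\dot{\gamma}$ need not extend to a globally defined vector field (for instance, when $\gamma$ self-intersects with distinct velocities), so one cannot naively invoke $\nabla_X Y$ for a global $X=Y=\dot{\gamma}$. This is exactly why $\nabla_{\dot{\gamma}}$ was introduced as an operator on vector fields along the curve, and the computation in \cref{prop:covariant-unique} produced a formula depending only on the frame, the Christoffel symbols, and the component functions $\dot{\gamma}_i$ and $X^i$ together with their $t$-derivatives. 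Since $\dot{\gamma}=\sum_{i=1}^d \dot{\gamma}_i\partial_i$ has component functions that are themselves smooth along $\gamma$, the formula is well-defined for $X=\dot{\gamma}$, and the substitution above is legitimate. With that justification in place the proof is a one-line specialization, so I would keep the remaining argument brief and emphasize the pointwise independence (for each $k$) of the coefficients.
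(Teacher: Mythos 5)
Your proposal is correct and is essentially the paper's own argument: the paper's proof simply re-derives inline, via \eqref{eq:covariant-linear}, \eqref{eq:covariant-leibniz}, \eqref{eq:affine-1}, and \eqref{eq:christoffel-symbol}, exactly the coordinate expansion of $\nabla_{\dot{\gamma}}\dot{\gamma}$ that you obtain by citing the formula from \cref{prop:covariant-unique} and substituting $X=\dot{\gamma}$. Your added remark justifying why $\dot{\gamma}$ is a legitimate argument (being only a vector field along the curve) is a point the paper glosses over, so it strengthens rather than changes the argument.
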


\begin{proof}
Let us start writing the condition $\nabla_{\dot{\gamma}}\dot{\gamma}=0$ in terms of Christoffel symbols.
\begin{align}
0
	=&\nabla_{\dot{\gamma}}\dot{\gamma}\label{eq:geodesic-christoffel-proof-0}\\
	\stackrel{\eqref{eq:covariant-linear}}{=}& \sum_{i=1}^d \nabla_{\dot{\gamma}} \dot{\gamma}_i \partial_i\nonumber\\
	\stackrel{\eqref{eq:covariant-leibniz}}{=}& \sum_{i=1}^d \ddot{\gamma}_i \partial_i + \dot{\gamma}_i\nabla_{\dot{\gamma}}\partial_i\nonumber\\
	\stackrel{\eqref{eq:affine-1}}{=}& \sum_{i,j=1}^d \dot{\gamma}_i\dot{\gamma}_j \nabla_{\partial_j}\partial_i + \sum_{i=1}^d\ddot{\gamma}_i \partial_i\nonumber\\
	\stackrel{\eqref{eq:christoffel-symbol}}{=}& \sum_{i,j,k=1}^d \dot{\gamma}_i\dot{\gamma}_j \Gamma_{ij}^k\partial_k + \sum_{i=1}^d\ddot{\gamma}_i \partial_i\nonumber
\end{align}
Consequently,~\eqref{eq:geodesic-christoffel-proof-0} reduces to
\begin{equ}
0 = \sum_{i,j,k=1}^d \dot{\gamma}_i\dot{\gamma}_j \Gamma_{ij}^k \partial_k + \sum_{i=1}^d \ddot{\gamma}_i \partial_i.
\end{equ}
Rearranging this equation, we see that $\gamma$ satisfies~\eqref{eq:geodesic-diff-form}.
\end{proof}

\subsection{Geodesics as length minimizing curves on a Riemannian manifold}
\noindent
Since the metric tensor allows us to measure distances on a Riemannian manifold,  there is an alternative, and sometimes useful, way of defining geodesics on it: as length minimizing curves.
Before we can define a geodesic in this manner, we need to define  the length of a curve on a Riemannian manifold.
This gives rise to a notion of distance between two points as the  minimum length of a  curve that joins these points.
Using the metric tensor we can measure the instantaneous length of a given curve.
Integrating along the vector field induced by its derivative, we can measure the length of the curve.

\begin{definition}[\bf Length  of a curve on a Riemannian manifold]
Let $(M, g)$ be a Riemannian manifold and $\gamma:I\to\R$ be a smooth curve on $M$ where $I\subseteq \R$ is an interval.
We define the length function $L$ as follows:
\begin{equ}[eq:length]
L[\gamma] := \int_I \sqrt{g_{\gamma}\inparen{\dot{\gamma}, \dot{\gamma}}}dt.
\end{equ}
\label{def:line-length}

\end{definition}
 
\noindent
If we ignore the local metric, then this is simply the line integral used for measuring the length of curves in the Euclidean spaces.
On the other hand, the local metric on Euclidean spaces is simply the $2$-norm.
Thus, this definition is a natural generalization of line integrals to Riemannian manifolds.
A notion related to the length of a curve, but mathematically more convenient is in terms of the ``work'' done by a particle moving along the curve.
To define this work, we first need to define the energy function as
\begin{equ}
\mathcal{E}(\gamma,\dot{\gamma},t):=g_{\gamma(t)}(\dot{\gamma}(t), \dot{\gamma}(t)),
\end{equ} 
which is just the square of the term in the integral above.
Given the energy at each time, the work done is the integral of the energy over the curve: 
\begin{equ}[eq:energy]
S[\gamma] = \int_0^1 \mathcal{E}(\gamma, \dot{\gamma}, t)dt.
\end{equ}

\begin{definition}[\bf Geodesics on a Riemannian manifold]
Let $(M,g)$ be a Riemannian manifold and $p,q\in M$.
Let $\Gamma$ be the set of all smooth curves joining $p$ to $q$ defined from $I$ to $M$ where $I\subseteq \R$ is an interval.
$\gamma'\in\Gamma$ is a geodesic on $M$ if $\gamma'$ is a minimizer of
\begin{equ}
\inf_{\gamma\in\Gamma} L[\gamma]
\end{equ}
where $L$ is the length function defined as~\eqref{eq:length}.
\label{def:riemannian-geodesic}
Equivalently, $\gamma'$ minimizes
$S[\gamma]$. 
\end{definition} 

\noindent
The definition of geodesics on Riemannian manifold above  is a solution to an optimization problem and not very useful as such to compute geodesics.
We can recover the dynamics of this curve by using a result from the calculus of variations, the  Euler-Lagrange equations to characterize optimal solutions of this optimization problem.
\begin{theorem}[\bf Euler-Lagrange equations for geodesics]
Let $(M,g)$ be a Riemannian manifold.
Let $p$ and $q$ be two points on $M$, $\gamma':[0,1]\to M$ be the geodesic that joins $p$ to $q$ and minimizes $S[\gamma]$.
Then $\gamma'$ satisfies the following differential equations:
\begin{equ}
\frac{d}{dt} \frac{\partial \mathcal{E}}{\partial \dot{\gamma}}(\gamma') - \frac{\partial \mathcal{E}}{\partial \gamma}(\gamma') = 0.
\end{equ}
\label{thm:geodesic-EL}
\end{theorem}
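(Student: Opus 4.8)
The plan is to carry out the classical first-variation argument of the calculus of variations, adapted so that the competing curves genuinely remain on the manifold. Since $\gamma'$ is assumed to minimize the energy $S$ over \emph{all} smooth curves joining $p$ to $q$, the strategy is to embed $\gamma'$ into a one-parameter family of admissible curves with the same endpoints, differentiate $S$ along the family, and use that a minimizer must render this derivative zero.

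To make the variation legitimate on $M$, I would argue locally and pointwise. Fix an arbitrary time $t_0\in(0,1)$ and a chart $(U,\phi)$ around $\gamma'(t_0)$; since the Euler-Lagrange equation is a pointwise identity, it suffices to verify it at each such $t_0$. Working in the coordinates $x=\phi\circ\gamma'$, the curve becomes a path in the open set $\phi(U)\subseteq\R^d$ and the energy takes the explicit form $\mathcal{E}(\gamma,\dot\gamma,t)=\sum_{i,j}g_{ij}(\gamma)\dot\gamma_i\dot\gamma_j$. I would then choose a smooth variation $\eta:[0,1]\to\R^d$ supported in a small interval around $t_0$ (so $\eta$ vanishes near the endpoints, and in particular $\eta(0)=\eta(1)=0$), and set $\gamma_s:=\gamma'+s\eta$ in these coordinates. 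For $s$ small the compact support keeps $\gamma_s$ inside $\phi(U)$, so each $\gamma_s$ is an admissible curve from $p$ to $q$; this is the step that resolves the manifold constraint, since a naive perturbation in the ambient $\R^n$ need not stay on $M$. Define $\Phi(s):=S[\gamma_s]$.

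Because $\gamma'$ minimizes $S$, the function $\Phi$ has a minimum at $s=0$, so $\Phi'(0)=0$. Differentiating under the integral sign gives
\begin{equ}
\Phi'(0)=\int_0^1\left(\frac{\partial\mathcal{E}}{\partial\gamma}(\gamma')\cdot\eta + \frac{\partial\mathcal{E}}{\partial\dot\gamma}(\gamma')\cdot\dot\eta\right)dt.
\end{equ}
Integrating the second term by parts and using $\eta(0)=\eta(1)=0$ to discard the boundary contribution, this becomes
\begin{equ}
\Phi'(0)=\int_0^1\left(\frac{\partial\mathcal{E}}{\partial\gamma}(\gamma') - \frac{d}{dt}\frac{\partial\mathcal{E}}{\partial\dot\gamma}(\gamma')\right)\cdot\eta\,dt = 0.
\end{equ}

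Finally, since this integral vanishes for \emph{every} admissible variation $\eta$, I would invoke the fundamental lemma of the calculus of variations to conclude that the bracketed expression must vanish identically along $\gamma'$, which is precisely the asserted equation $\frac{d}{dt}\frac{\partial\mathcal{E}}{\partial\dot\gamma}(\gamma')-\frac{\partial\mathcal{E}}{\partial\gamma}(\gamma')=0$. I expect the main obstacle to be conceptual rather than computational: ensuring that the perturbed family $\gamma_s$ really consists of curves \emph{on} $M$ with the correct endpoints, which forces the passage to a chart and the use of locally supported variations so that a single chart suffices at each time; the remaining ingredients, namely differentiation under the integral and the fundamental lemma, follow routinely from smoothness of $g$ and the arbitrariness of $\eta$.
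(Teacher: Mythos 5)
Your proof is correct, and its skeleton --- first variation of the energy, differentiation under the integral, integration by parts against endpoint-vanishing variations --- matches the paper's proof of \cref{thm:EL} in \cref{sec:EL}. You diverge from the paper in two technical choices, both worth noting. First, the paper handles the manifold constraint by assumption: it posits that $M$ and $TM$ are embedded in $\R^n$ and works with arbitrary smooth variations $\nu:(-c,c)\times[0,1]\to M$ directly on the manifold, whereas you localize to a chart around each $t_0$ and perturb additively by a compactly supported $\eta$; your route is more self-contained, since in a chart every smooth compactly supported $\eta$ yields an admissible family of curves on $M$, and the pointwise conclusion at each $t_0$ glues into the global equation because the Euler--Lagrange equations transform covariantly under coordinate changes (a fact you should state in one sentence in a full write-up). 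Second, the final step differs: you invoke the fundamental lemma of the calculus of variations, while the paper avoids it by choosing the test field $\mu := \frac{\partial L}{\partial \gamma} - \frac{d}{dt}\frac{\partial L}{\partial \dot{\gamma}}$ itself, so that the vanishing first variation becomes the vanishing of a squared $L^2$-norm. The paper's trick is slicker, but it silently requires that this particular $\mu$ be realizable as $\left.\frac{\partial \nu_u}{\partial u}\right\rvert_{u=0}$ for a variation that stays on $M$ --- that is, that the Euler--Lagrange expression be tangent to $M$ along $\gamma$ in the embedded picture --- a point the paper does not verify; your chart-based setup sidesteps exactly this issue, since unconstrained $\eta$'s are automatically admissible in coordinates, at the modest cost of the localization argument and an appeal to the fundamental lemma.
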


\noindent
We derive the Euler-Lagrange equations and prove this theorem in~\cref{sec:EL}.

\subsection{Equivalence of the two notions for Riemannian Manifolds}

\noindent
We presented two different definitions of geodesics, one for smooth manifolds with a given affine connection and one for Riemannian manifolds in terms of the metric.
As shown earlier, the metric of a Riemannian manifold determines a unique affine connection -- the Levi-Civita connection.
We also show  that these two formulations  describe the same curves.

\begin{theorem}
Let $(M, g)$ be a $d$-dimensional Riemannian manifold and $\nabla$ be its Levi-Civita connection.
Let $\gamma:I\to M$ be a smooth curve joining $p$ to $q$.
If $\gamma$ is a minimizer of the work function defined in~\eqref{eq:energy}, then
\begin{equ}
\nabla_{\dot{\gamma}}\dot{\gamma} = 0.
\end{equ}
\label{thm:geodesic-equivalence}
\end{theorem}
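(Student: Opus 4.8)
The plan is to combine the variational characterization of geodesics with the explicit coordinate formula for the Levi-Civita connection. By \cref{thm:geodesic-EL}, a minimizer $\gamma$ of the work functional $S[\gamma]$ in \eqref{eq:energy} satisfies the Euler-Lagrange equations $\frac{d}{dt}\frac{\partial \mathcal{E}}{\partial \dot{\gamma}} - \frac{\partial \mathcal{E}}{\partial \gamma} = 0$, understood as one scalar equation for each coordinate. My goal is to expand these equations in a local frame $\{\partial_i\}_{i=1}^d$ and show that they reduce \emph{exactly} to the coordinate form of $\nabla_{\dot{\gamma}}\dot{\gamma}=0$ recorded in \cref{prop:geodesic-christoffel}, after substituting the Christoffel symbols supplied by \cref{thm:fundamental}.

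First I would write the energy in coordinates. Setting $\dot{\gamma}=\sum_i \dot{\gamma}_i\partial_i$ and $g_{ij}:=g(\partial_i,\partial_j)$, we have $\mathcal{E}=\sum_{i,j} g_{ij}(\gamma)\,\dot{\gamma}_i\dot{\gamma}_j$. Treating $\gamma_k$ and $\dot{\gamma}_k$ as independent Lagrangian variables, symmetry of $g$ gives $\frac{\partial \mathcal{E}}{\partial \dot{\gamma}_k}=2\sum_j g_{kj}\dot{\gamma}_j$, and differentiating along the curve (so that $\frac{d}{dt}g_{kj}(\gamma)=\sum_l \partial_l g_{kj}\,\dot{\gamma}_l$) yields
\begin{equ}
\frac{d}{dt}\frac{\partial \mathcal{E}}{\partial \dot{\gamma}_k} = 2\sum_{j,l}\partial_l g_{kj}\,\dot{\gamma}_l\dot{\gamma}_j + 2\sum_j g_{kj}\ddot{\gamma}_j,
\end{equ}
while $\frac{\partial \mathcal{E}}{\partial \gamma_k}=\sum_{i,j}\partial_k g_{ij}\,\dot{\gamma}_i\dot{\gamma}_j$. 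Subtracting produces the $k$-th Euler-Lagrange equation entirely in terms of $g$ and $\gamma$.

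Next I would symmetrize and contract with the inverse metric. Because $\dot{\gamma}_l\dot{\gamma}_j$ is symmetric in $(j,l)$, the term $\sum_{j,l}\partial_l g_{kj}\,\dot{\gamma}_l\dot{\gamma}_j$ equals $\frac{1}{2}\sum_{j,l}(\partial_l g_{kj}+\partial_j g_{kl})\dot{\gamma}_l\dot{\gamma}_j$, so after relabeling the dummy index in the $\frac{\partial \mathcal{E}}{\partial \gamma_k}$ term the equation becomes
\begin{equ}
\sum_{j,l}\inparen{\partial_l g_{kj}+\partial_j g_{kl}-\partial_k g_{lj}}\dot{\gamma}_l\dot{\gamma}_j + 2\sum_j g_{kj}\ddot{\gamma}_j = 0.
\end{equ}
Multiplying by $\frac{1}{2}g^{mk}$, summing over $k$, and using $\sum_k g^{mk}g_{kj}=\delta_{mj}$ together with the formula $\Gamma_{lj}^m=\frac{1}{2}\sum_k g^{mk}(\partial_l g_{jk}+\partial_j g_{lk}-\partial_k g_{lj})$ from \cref{thm:fundamental}, this collapses to $\ddot{\gamma}_m + \sum_{j,l}\Gamma_{lj}^m\dot{\gamma}_l\dot{\gamma}_j=0$ for every $m$. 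This is precisely the system \eqref{eq:geodesic-diff-form}, which by \cref{prop:geodesic-christoffel} is equivalent to $\nabla_{\dot{\gamma}}\dot{\gamma}=0$, completing the argument.

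The main obstacle is bookkeeping rather than conceptual: I must interpret the Euler-Lagrange equations as a coordinate-wise system (so that $\frac{\partial \mathcal{E}}{\partial \dot{\gamma}}$ and $\frac{\partial \mathcal{E}}{\partial \gamma}$ are read componentwise), and I must relabel dummy indices during symmetrization so that the three-term combination $\partial_l g_{kj}+\partial_j g_{kl}-\partial_k g_{lj}$ lines up \emph{exactly} with the Christoffel formula of \cref{thm:fundamental}, including matching $g_{kj}=g_{jk}$ inside the derivatives. A secondary subtlety is that the whole computation is local, carried out within one chart; since $\nabla_{\dot{\gamma}}\dot{\gamma}=0$ is a pointwise condition, the local derivation suffices, and one should simply note that the minimizer is smooth enough for \cref{thm:geodesic-EL} to apply.
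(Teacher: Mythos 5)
Your proposal is correct and follows essentially the same route as the paper's own proof: expand $\mathcal{E}$ in a frame, apply \cref{thm:geodesic-EL}, symmetrize the metric-derivative terms, contract with the inverse metric, and identify the resulting coefficients with the Christoffel symbols of \cref{thm:fundamental} so that \cref{prop:geodesic-christoffel} yields $\nabla_{\dot{\gamma}}\dot{\gamma}=0$. The only cosmetic difference is that you omit the factor $\tfrac{1}{2}$ in the energy (the paper's appendix includes it), which merely rescales every term by $2$ and does not affect the vanishing of the Euler--Lagrange expression.
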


\noindent
We present the proof of this theorem in~\cref{sec:geodesic-equivalence}.
An immediate corollary of~\cref{thm:geodesic-EL} and~\cref{thm:geodesic-equivalence} is that length minimizing curves are also geodesics in the sense of differentiable manifolds.
The other direction is not necessarily true.
In a given Riemannian manifold, there can be geodesics whose length greater than the distance between points they join.

\subsection{Examples}
\begin{example}[{\bf The Euclidean space}, $\R^n$] 
We do a sanity check and show  that geodesics in $\R^n$ are indeed straight lines.
We have already shown that the Christoffel symbols are $0$;  see~\cref{ex:Rn-christoffel}.
Hence,~\eqref{eq:geodesic-diff-form} simplifies to
\begin{equ}
\sum_{i=1}^n\ddot{\gamma}_i\partial_i = 0.
\end{equ}
Since, $\{\partial_i\}_{i=1}^n$ are independent for each $i=1,\hdots,n$,
\begin{equ}
\ddot{\gamma}_i = 0.
\end{equ}
This is simply the equation of a line.
Therefore, geodesics in the $R^n$ are straight lines.
\end{example}

\begin{figure}[!htb]
\centering
\begin{subfigure}{.4\textwidth}
\centering
\includegraphics[keepaspectratio, width=\linewidth]{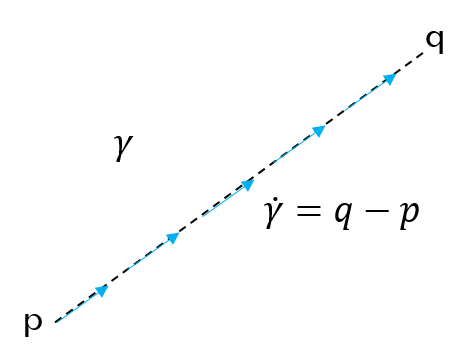}
\caption{The tangent vector to geodesic is parallel to the geodesic at any points.}
\end{subfigure} \hspace{0.5cm}
\begin{subfigure}{.4\textwidth}
\centering
\includegraphics[keepaspectratio, width=\linewidth]{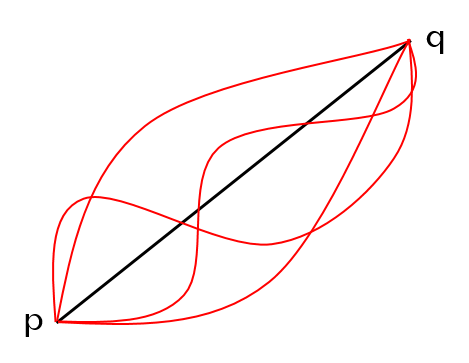}
\caption{Minimizers of length function are geodesics.}
\end{subfigure}
\caption{Geodesics on $\R^2$}
\label{fig:R2-geodesics}
\end{figure}

\begin{figure}[!htb]
\centering
\begin{subfigure}{.4\textwidth}
\centering
\includegraphics[keepaspectratio, width=\linewidth]{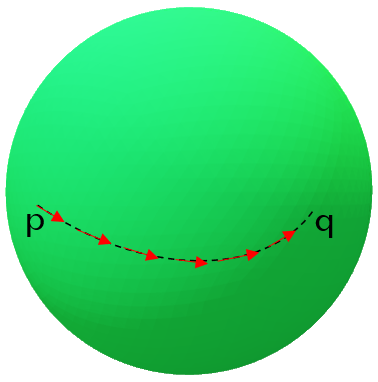}
\caption{The tangent vector to geodesic is parallel to the geodesic at any points.}
\end{subfigure}\hspace{0.5cm}
\begin{subfigure}{.4\textwidth}
\centering
\includegraphics[keepaspectratio, width=\linewidth]{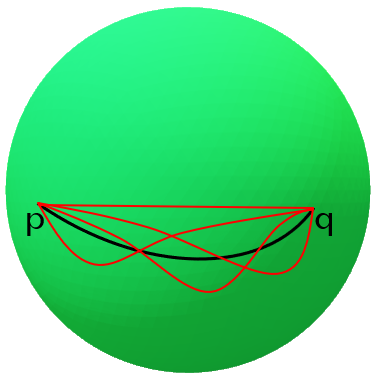}
\caption{Minimizers of length function are geodesics.}
\end{subfigure}
\caption{Geodesics on a sphere}
\label{fig:S2-geodesics}
\end{figure}

\begin{example}[{\bf The positive orthant}, $\R^n_{>0}$]
We show that the geodesic that joins $p$ to $q$ in $\R^n_{>0}$ endowed with metric corresponding to the Hessian of the log-barrier function can be parameterized as follows,
\begin{equ}
\gamma(t) = (p_1(q_1/p_1)^t,\hdots,p_n(q_n/p_n)^t).
\end{equ}
We know that the Christoffel symbols are $\Gamma_{ii}^i(p)=-p_i^{-1}$, and $\Gamma_{ij}^k(p)=0$ for $p\in\R^n_{>0}$ and $i,j,k=1,\hdots,n$ such that $i,j,$ and $k$ are not all same,~\cref{ex:po-christoffel}.
Hence,~\eqref{eq:geodesic-diff-form} simplifies to
\begin{equ}
0	= -\sum_{i=1}^n (-\gamma_i^{-1}\dot{\gamma}_i^2 +  \ddot{\gamma}_i) \partial_i
\end{equ}
Thus,
\begin{equ}
0=\ddot{\gamma}_i - \dot{\gamma}_i^2\gamma_i^{-1}
\end{equ}
or equivalently
\begin{equ}
\frac{d}{dt} \log(\dot{\gamma}_i) = \frac{d}{dt} \log(\gamma_i).
\end{equ}
Consequently,
\begin{equ}
\log(\dot{\gamma}_i) = \log(\gamma_i) + c_i
\end{equ}
for some constant $c_i$.
Equivalently
\begin{equ}
\frac{d}{dt} \log(\gamma_i) = e^{c_i} 
\end{equ}
Therefore,
\begin{equ}
\log(\gamma_i) = \alpha_i t + \beta_i
\end{equ}
for some constants $\alpha_i$ and $\beta_i$.
If we take $\beta_i$ as $\log p_i$ and $(\alpha_i - \beta_i)$ as $\log q_i$, then $\gamma$ becomes,
\begin{equ}
\gamma(t) = (p_1(q_1/p_1)^t,\hdots,p_n(q_n/p_n)^t).
\end{equ}
\cref{fig:geodesic} visualizes a geodesic on $\R^2_{>0}$ and $\R^2$.
\end{example}

\begin{figure}[!hbt]
\centering
\includegraphics[keepaspectratio, width=0.4\textwidth]{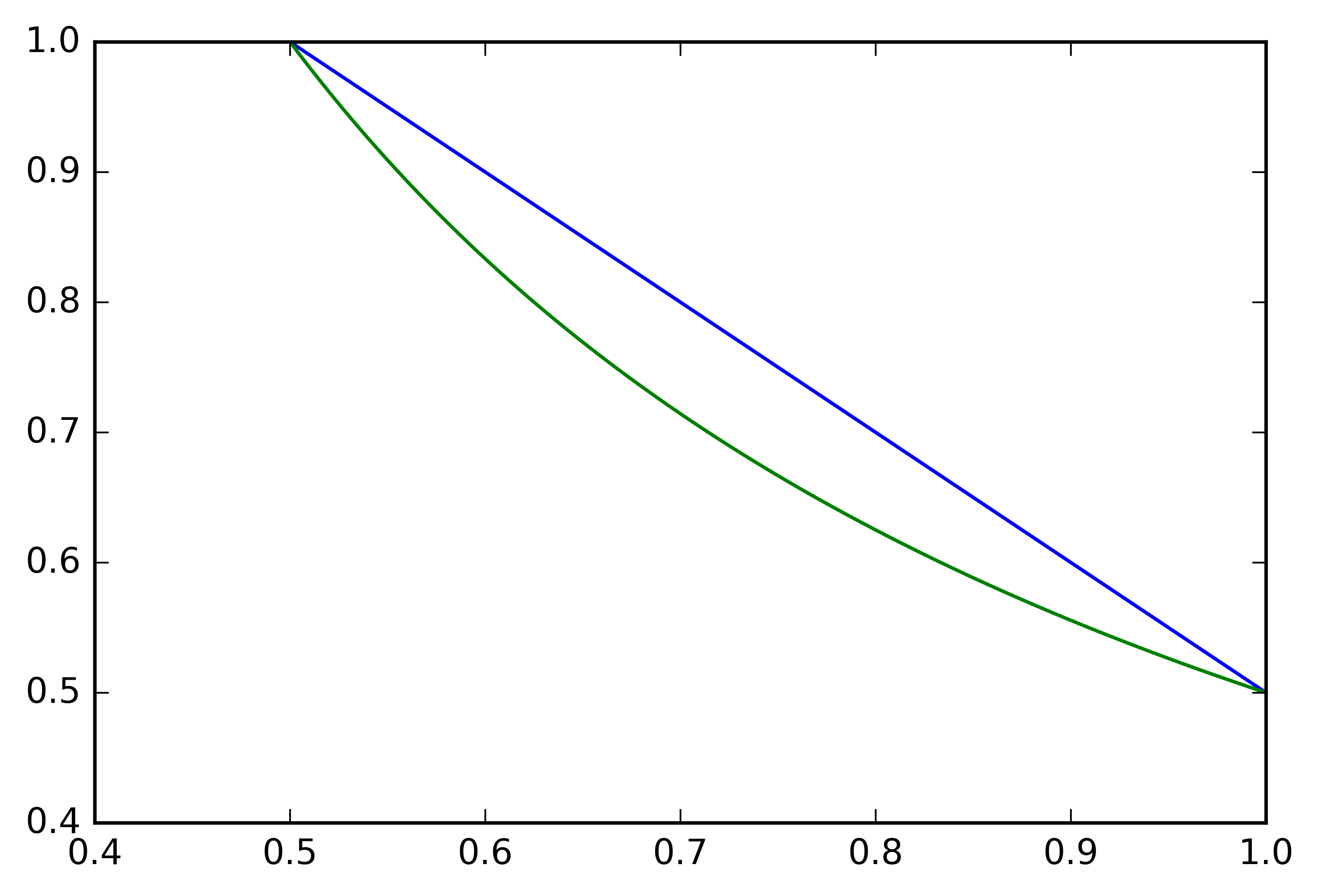}
\caption{
Two geodesics joining $(1.0, 0.5)$ to $(0.5, 1.0)$ on the smooth manifold $\R_{>0}^2$.
The blue curve corresponds to the geodesic with respect to metric tensor $g_p(u,v):=\inner{u, v}$.
The green curve corresponds to the geodesic with respect to metric tensor $g_p(u, v):=\inner{P^{-1} u, P^{-1} v}$ where $P$ is a diagonal matrix whose entries is $p$.
}
\label{fig:geodesic}
\end{figure}

\begin{example}[{\bf The positive definite cone}, $\Sn^n_{++}$]
We show that the geodesic with respect to the Hessian of the logdet metric that joins $P$ to $Q$ on $\Sn^n_{++}$ can be parameterized as follows:
\begin{equ}[eq:pd-geodesics]
\gamma(t) := P^{1/2} (P^{-1/2} Q P^{-1/2})^t P^{1/2}.
\end{equ}
Thus, $\gamma(0)=P$ and $\gamma(1)=Q$.
Instead of using the straight-line formulation, here it turns out to be a bit easier to use the Euler-Lagrange characterization of geodesics to derive a formula.
Let $\{e_i\}_{i=1}^n$ be standard basis for $\R^n$ and 
\begin{equ}
E_{ij}:=
\begin{cases}
 e_i e_j^\top + e_j e_i^\top, &\mbox{ if $i\neq j$},\\
 e_ie_i^\top,&\mbox{ if $i=j$}.
 \end{cases}
 \end{equ}
 $\{E_{ij}\}_{1\leq i\leq j\leq n}$ is a basis for the set of symmetric matrices $\Sn^n$, the tangent space.
 Let $\gamma(t)$ be a minimum length curve that joins $P$ to $Q$.
If we write $\gamma(t)$ as 
 \begin{equ}
 \sum_{1\leq i\leq j\leq n} \gamma_{ij}(t)E_{ij}
 \end{equ}
 then 
 \begin{equ}
 \frac{\partial \gamma(t)}{\partial \gamma_{ij}(t)} = E_{ij}
 \end{equ}
 for $1\leq i\leq j\leq n$.
 Let us define $\mathcal{E}(t)$ as follows,
 \begin{equ}
\mathcal{E}(t) := \frac{1}{2} \tr[\gamma(t)^{-1}\dot{\gamma}(t)\gamma(t)^{-1}\dot{\gamma}(t)]
 \end{equ}
 and notice that this definition of $\mathcal{E}(t)$ is actually the same as $\mathcal{E}(t)$ defined in~\cref{thm:geodesic-EL}.
Let us compute partial derivatives of $\mathcal{E}$, before applying~\cref{thm:geodesic-EL}.
 \begin{align*}
 \frac{\partial \mathcal{E}}{\partial \gamma_{ij}} 
 	=& \frac{1}{2}\tr\left[
 		\frac{\partial \gamma^{-1}}{\partial \gamma_{ij}} \dot{\gamma} \gamma^{-1} \dot{\gamma}
 		+\gamma^{-1} \frac{\partial \dot{\gamma}}{\partial \gamma_{ij}}\gamma^{-1} \dot{\gamma}
 		+\gamma^{-1} \dot{\gamma}\frac{\partial \gamma^{-1}}{\partial \gamma_{ij}} \dot{\gamma}
 		+\gamma^{-1} \dot{\gamma}\gamma^{-1} \frac{\partial \dot{\gamma}}{\partial \gamma_{ij}}
 	\right]\\
 	=& -\tr[\gamma^{-1} E_{ij} \gamma^{-1}\dot{\gamma}\gamma^{-1}\dot{\gamma}]\\
\frac{\partial \mathcal{E}}{\partial \dot{\gamma}_{ij}}
 	=& \frac{1}{2}\tr\left[
 		\frac{\partial \gamma^{-1}}{\partial \dot{\gamma}_{ij}} \dot{\gamma} \gamma^{-1} \dot{\gamma}
 		+\gamma^{-1} \frac{\partial \dot{\gamma}}{\partial \dot{\gamma}_{ij}}\gamma^{-1} \dot{\gamma}
 		+\gamma^{-1} \dot{\gamma}\frac{\partial \gamma^{-1}}{\partial \dot{\gamma}_{ij}} \dot{\gamma}
 		+\gamma^{-1} \dot{\gamma}\gamma^{-1} \frac{\partial \dot{\gamma}}{\partial \dot{\gamma}_{ij}}
 	\right]\\
 	=& \tr[\gamma^{-1}E_{ij}\gamma^{-1}\dot{\gamma}]\\
\frac{d}{dt}\frac{\partial \mathcal{E}}{\partial \dot{\gamma}_{ij}}
	=& \frac{d}{dt}\tr[\gamma^{-1}E_{ij}\gamma^{-1}\dot{\gamma}]\\
	=& \tr[-\gamma^{-1}\dot{\gamma}\gamma^{-1}E_{ij}\gamma^{-1}\dot{\gamma}-\gamma^{-1}E_{ij}\gamma^{-1}\dot{\gamma}\gamma^{-1}\dot{\gamma}+\gamma^{-1}E_{ij}\gamma^{-1}\ddot{\gamma}]\\
	=&\tr[E_{ij}(\gamma^{-1}\ddot{\gamma}\gamma^{-1}-2\gamma^{-1}\dot{\gamma}\gamma^{-1}\dot{\gamma}\gamma^{-1})]
 \end{align*}
for $1\leq i\leq j\leq n$.
Thus,
\begin{equ}
\tr[E_{ij}(\gamma^{-1}\ddot{\gamma}\gamma^{-1}-2\gamma^{-1}\dot{\gamma}\gamma^{-1}\dot{\gamma}\gamma^{-1})] = -\tr[\gamma^{-1} E_{ij} \gamma^{-1}\dot{\gamma}\gamma^{-1}\dot{\gamma}],
\end{equ}
or equivalently
\begin{equ}
\inner*{\gamma^{-1}\ddot{\gamma}\gamma^{-1}-\gamma^{-1}\dot{\gamma}\gamma^{-1}\dot{\gamma}\gamma^{-1},E_{ij}}_F = 0
\end{equ}
for $1\leq i\leq j\leq n$.
Since $E_{ij}$ forms a basis for $\Sn^n$, this implies that
\begin{equ}
\gamma^{-1}\ddot{\gamma}\gamma^{-1}-\gamma^{-1}\dot{\gamma}\gamma^{-1}\dot{\gamma}\gamma^{-1} = 0,
\end{equ}
or equivalently
\begin{equ}
0 = \ddot{\gamma}\gamma^{-1}-\dot{\gamma}\gamma^{-1}\dot{\gamma}\gamma^{-1} =\frac{d}{dt}\left(\dot{\gamma}\gamma^{-1}\right).
\end{equ}
Hence,
\begin{equ}
\dot{\gamma}\gamma^{-1} = C
\end{equ}
for some constant real matrix $C$. 
Equivalently,
\begin{equ}[eq:pd-proof-0]
\dot{\gamma}=C\gamma.
\end{equ}
If $\gamma$ was a real valued function and $C$ was a real constant, then~\eqref{eq:pd-proof-0} would be a first order linear differential equation whose solution is $\gamma(t)=d\exp(ct)$ for some constant $d$.
Using this intuition our guess for $\gamma(t)$ is $\exp(tC)D$ for some constant matrix $D$.
We can see that this is a solution by plugging into~\eqref{eq:pd-proof-0}, but we need to show that this is the unique solution to this differential equations.
Let us define $\phi(t):=\exp(-tC) \gamma(t)$ and compute $\dot{\phi}$,
\begin{align*}
\dot{\phi}(t)
	=& -C\exp(-tC)\gamma(t) + \exp(-tC)\dot{\gamma}(t)
	=& -\exp(-tC)C\gamma(t)+\exp(-tC)C\gamma(t)
	=0.
\end{align*}
Here, we used the fact that $\exp(tC)$ and $C$ commutes.
Thus $\phi(t) = D$ for some constant matrix.
Hence, the unique solution of~\eqref{eq:pd-proof-0} is $\gamma(t)=\exp(tC)D$.
Now, let us notice that both $\gamma$ and $\dot{\gamma}$ are symmetric.
In other words,
\begin{equ}
\dot{\gamma}= \exp(tC)CD=C\exp(tC)D=C\gamma(t)=\gamma(t)C^\top=\exp(tC)DC^\top.
\end{equ}
In other words, $CD=DC^\top$.
Now, let us consider boundary conditions,
\begin{equ}
P =\gamma(0) = \exp(0\cdot C)D = D,
\end{equ}
and write $C$ as $P^{1/2} S P^{-1/2}$.
Such a matrix $S$ exists as $P$ is invertible.
Thus, the $CD=DC^\top$ condition implies
\begin{equ}
P^{1/2} S P^{1/2} = P^{1/2} S^\top P^{1/2}
\end{equ} 
or equivalently $S=S^\top$.
Thus, $C=P^{1/2} S P^{-1/2}$ for some symmetric matrix, $S$.
Hence,
\begin{equ}
\gamma(t) = \exp(tC)D=\left(\sum_{k=0}^\infty \frac{t^k (P^{1/2} S P^{-1/2})^k}{k!}\right)P = P^{1/2} \left(\sum_{k=0}^\infty \frac{t^k S^k}{k!}\right)P^{1/2} = P^{1/2}\exp(tS) P^{1/2}.
\end{equ}
Finally, considering $\gamma(1)=Q$, we get
\begin{equ}
S = \log(P^{-1/2} Q P^{-1/2}).
\end{equ}
Therefore, the geodesic joining $P$ to $Q$ is
\begin{equ}
\gamma(t) = P^{1/2} (P^{-1/2}QP^{-1/2})^t P^{1/2}.
\end{equ}

\label{ex:pd-cone-geodesics}
\end{example}

\section{Geodesic Convexity}
\label{sec:g-convexity}
With all the machinery and the definitions of geodesics behind us, we are now ready to define geodesic convexity. 
We refer the reader to the book \cite{Udr94} for an extensive treatment on geodesic convexity.

\subsection{Totally convex sets}
\begin{definition}[\bf Totally (geodesically) convex set]
Let $(M, g)$ be a Riemannian manifold.
A set $K\subseteq M$ is said to be  totally  convex with respect to $g$, if for any $p,q\in K$, any geodesic $\gamma_{pq}$ that joins $p$ to $q$ lies entirely in $K$. 
\label{def:g-convex-set}
\end{definition}

\noindent
Totally  convex sets are a generalization of convex sets.
In the Euclidean case, there is a unique geodesic joining points $p$ and $q$ which is the straight line segment between $p$ and $q$.
Consequently, totally  convex sets and convex sets are same in the Euclidean case.
One can relax the definition of totally  convex sets by requiring that geodesics that minimize the distance between points.
These sets are called \emph{geodesically convex sets}.
If there is a unique geodesic  joining each pair of points, then both the definition are the same.
However, in general totally  convex sets are more restrictive.
This can be seen from the following example of the unit sphere.
On  $S^n:=\Set{x\in\R^{n+1}}{\norm{x}_2=1}$ with the metric induced by the Euclidean norm, there are at least two geodesics joining given any two points, $p,q\in S^n$.
These geodesics are long and short arcs between $p$ and $q$ on a great circle passing through both $p$ and $q$.
A set on $S^n$ is geodesically convex if short arcs are part of the set.
On the other hand, a set on $S^n$ is totally  convex if both short and long arcs are part of the set.

\begin{example}[\bf A non-convex but totally  convex set]
Let us consider the Riemannian manifold on $\Sn^n_{++}$ and a positive number $c\in\R+$.
Let $$D_c :=\Set{P\in\Sn^n_{++}}{\det(P)=c}.$$
One can easily verify that $D_c$ is a non-convex set.
On the other hand, if $P$ and $Q$ are two points in $D_c$, then the geodesic joining $P$ to $Q$, $\gamma_{PQ}$ can be parameterized as follows
\begin{equ}
\gamma_{PQ}(t):= P^{1/2} (P^{-1/2}QP^{-1/2})^t P^{1/2},
\end{equ}
see~\cref{ex:pd-cone-geodesics}.
Now, let us verify that $\forall t\in[0,1]$, $\gamma_{PQ}(t)\in D_c$ or equivalently $\det(\gamma_{PQ}(t))=c$.
\begin{align*}
\det(\gamma_{PQ}(t))
	=& \det(P^{1/2} (P^{-1/2}QP^{-1/2})^t P^{1/2})\\
	=& \det(P)^{1/2} (\det(P)^{-1/2}) \det(Q) \det(P)^{-1/2})^c \det(P)^{1/2}\\
	=& \det(P)^{1-t}\det(Q)^t\\
	=& c^{1-t}c^t=c.
\end{align*}
Therefore, $D_c$ is a totally  convex but non-convex subset of $\Sn^n_{++}$.

\end{example}

\begin{figure}[!htb]
\centering
\includegraphics[keepaspectratio, width=0.7\textwidth]{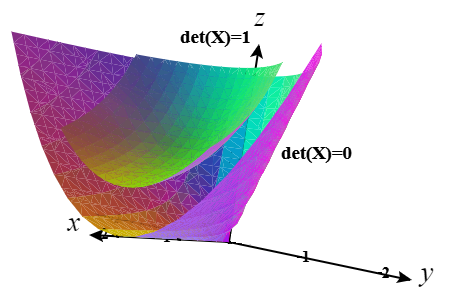}
\caption[]{Visual representation of a geodesically convex set on  $\Sn^2_{++}$.
Given a $2 \times 2$ matrix $X=\bigl( \begin{smallmatrix}x & y\\ y & z\end{smallmatrix}\bigr)$, 
the outer surface, $\det(X)=0$, is the boundary of $\Sn^2_{++}$ and  the inner surface, $\det(X)=1$, is a geodesically convex set consisting of solutions of $\det(X)=1$.
}
\label{fig:g-convex-set}
\end{figure}

\subsection{Geodesically convex functions}
\begin{definition}[\bf Geodesically convex function]
Let $(M, g)$ be a Riemannian manifold and $K\subseteq M$ be a totally convex set with respect to $g$.
A function $f:K\to\R$ is said to be a geodesically convex function with respect to $g$ if for any $p,q\in K$, and for any geodesic $\gamma_{pq}:[0,1]\to K$ that joins $p$ to $q$,
\begin{equ}
\forall t\in[0,1]\;\;f(\gamma_{pq}(t))\leq (1-t) f(p) + t f(q).
\end{equ}
\label{def:g-convex-function}
\end{definition}

\noindent
This definition can be interpreted as follows: the univariate function constructed by restricting $f$ to a geodesic is a convex function if the geodesic lies in $K$.

\begin{theorem}[\bf First-order characterization of geodesically convex functions]
Let $(M, g)$ be a Riemannian manifold and $K\subseteq M$ be an open and  totally convex set with respect to $g$.
A differentiable function $f:K\to\R$ is said to be geodesically convex  with respect to $g$ if and only if for any $p,q\in K$, and for any geodesic $\gamma_{pq}:[0,1]\to K$ that joins $p$ to $q$,
\begin{equ}[eq:first-order-condition]
f(p)+\dot{\gamma}_{pq}(f)(p)\leq f(q),
\end{equ}
where $\dot{\gamma}_{pq}(f)$ denotes the first derivative of $f$ along the geodesic.
\label{thm:g-convex-first-order}
\end{theorem}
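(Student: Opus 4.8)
The plan is to reduce the statement to a one-dimensional fact about convex functions by restricting $f$ to a geodesic. Fix $p,q\in K$ and a geodesic $\gamma:=\gamma_{pq}:[0,1]\to K$, and introduce the single-variable function $h:[0,1]\to\R$ given by $h(t):=f(\gamma(t))$. Since $f$ is differentiable and $\gamma$ is smooth, $h$ is differentiable, and by the chain rule $h'(0)=\dot{\gamma}_{pq}(f)(p)$, while $h(0)=f(p)$ and $h(1)=f(q)$. Under this dictionary, \cref{def:g-convex-function} says exactly that $h(t)\le(1-t)h(0)+t\,h(1)$ for all $t$, and the first-order inequality \eqref{eq:first-order-condition} reads $h(0)+h'(0)\le h(1)$. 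So the theorem becomes the familiar equivalence, for univariate differentiable functions, between the chord inequality and the tangent-line inequality, with the extra feature that it must hold along every geodesic.

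For the forward direction I would argue directly from the definition. Assuming geodesic convexity, the chord inequality gives $h(t)-h(0)\le t\,(h(1)-h(0))$, hence $\frac{h(t)-h(0)}{t}\le h(1)-h(0)$ for $t\in(0,1]$; letting $t\to 0^+$ and using differentiability of $h$ yields $h'(0)\le h(1)-h(0)$, which is \eqref{eq:first-order-condition}. Nothing beyond the definition and a one-sided limit is needed here.

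For the converse I would exploit that sub-arcs of geodesics are again geodesics. For $\tau\in(0,1)$ set $r:=\gamma(\tau)$ and consider the affine reparametrizations $\sigma(u):=\gamma(\tau(1-u))$ and $\rho(u):=\gamma(\tau+u(1-\tau))$, which are geodesics joining $r$ to $p$ and $r$ to $q$ respectively. They are geodesics because the geodesic equation is invariant under affine reparametrization, and their images lie in $K$ since $\gamma([0,1])\subseteq K$ by total convexity; openness of $K$ is what lets us differentiate freely along them. Applying the hypothesis \eqref{eq:first-order-condition} to $\sigma$ and to $\rho$, and computing the along-curve derivatives at the base point by the chain rule as $-\tau h'(\tau)$ and $(1-\tau)h'(\tau)$, gives the two inequalities $h(\tau)-\tau h'(\tau)\le h(0)$ and $h(\tau)+(1-\tau)h'(\tau)\le h(1)$. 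Forming the combination $(1-\tau)\cdot[\text{first}]+\tau\cdot[\text{second}]$ cancels the $h'(\tau)$ terms and yields $h(\tau)\le(1-\tau)h(0)+\tau\,h(1)$, which is precisely geodesic convexity of $f$ along $\gamma$.

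The main obstacle, and the point I would treat most carefully, is this restriction step in the converse: one must verify that a sub-arc of a geodesic, affinely reparametrized onto $[0,1]$, is still an admissible geodesic between two points of $K$ to which the hypothesis applies, and that the chain-rule computation of $\dot{\sigma}(f)$ and $\dot{\rho}(f)$ at the base point produces exactly the factors $-\tau h'(\tau)$ and $(1-\tau)h'(\tau)$. Once these are in place, the remainder is the standard univariate convexity argument transported verbatim.
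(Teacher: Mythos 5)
Your proposal is correct and follows essentially the same route as the paper's proof: the forward direction via the difference quotient $\frac{f(\gamma_{pq}(t))-f(p)}{t}\le f(q)-f(p)$ and a limit as $t\to 0^+$, and the converse by applying the first-order inequality to the two affinely reparametrized sub-geodesics from $r=\gamma_{pq}(\tau)$ to $p$ and to $q$ (your $\sigma,\rho$ are exactly the paper's $\beta,\alpha$, with the same tangent factors $-\tau\,h'(\tau)$ and $(1-\tau)h'(\tau)$) and then taking the convex combination that cancels the derivative term. Your extra care in justifying that sub-arcs remain geodesics is a welcome refinement of a step the paper states without proof, but it does not change the argument.
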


\noindent
Geometrically, ~\cref{thm:g-convex-first-order} is equivalent to saying that the linear approximation given by the tangent vector at any point is a lower bound for the function.
Recall that convex functions also satisfy this property.
The first derivative of $f$ along geodesics simply corresponds to the directional derivative of $f$ in that case.

\begin{figure}[!htb]
\centering
\includegraphics[keepaspectratio, width=0.6\textwidth]{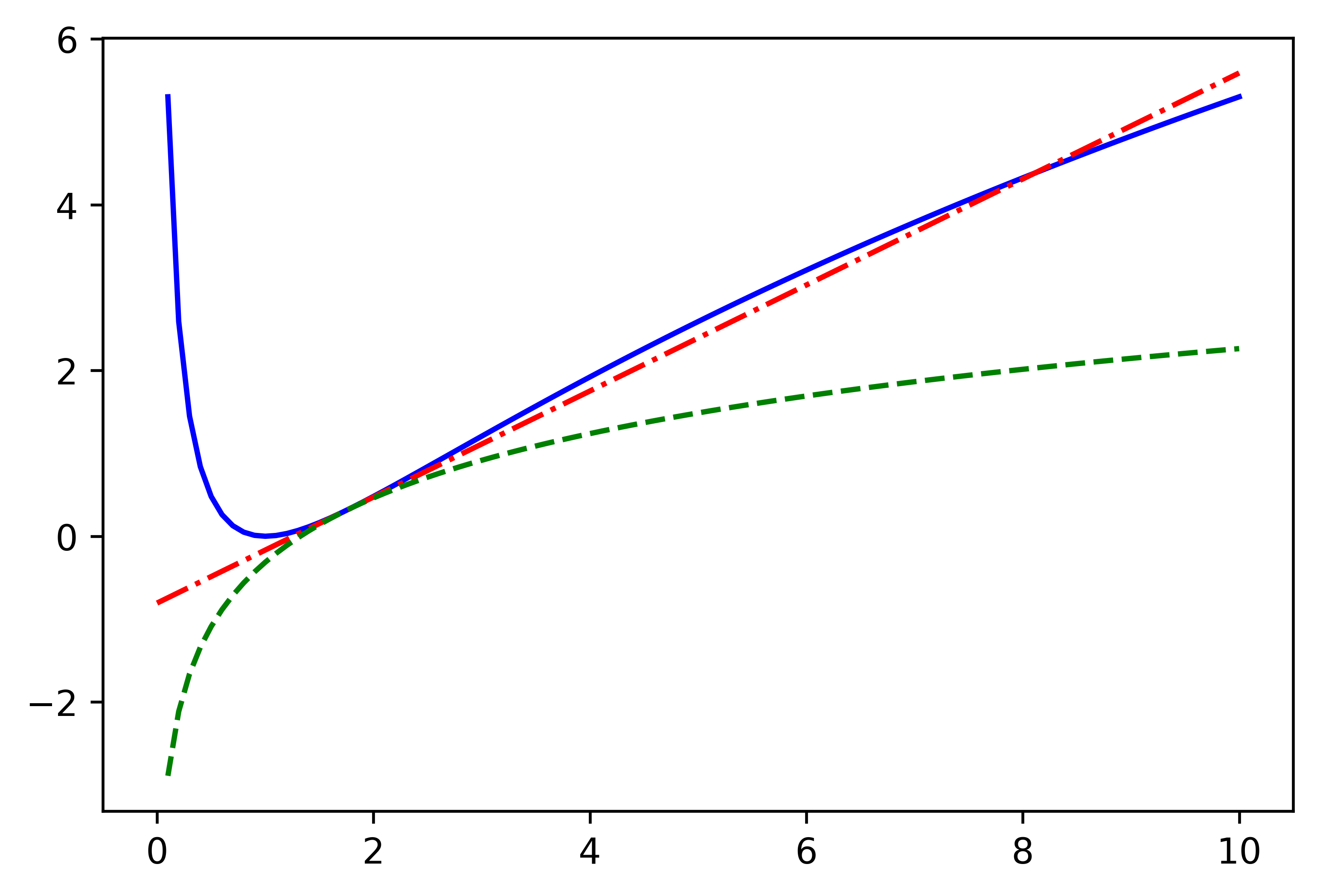}
\caption{%
The blue line is the graph of $f(x):=(\log x)^2$ is a non-convex function.
The red line corresponds to the tangent line at $x=2$.
The green line corresponds to  $f(2)+\dot{\gamma}_{2,x}(f)(2)$ where $\gamma$ is the geodesic with respect to metric $g_x(u,v):=ux^{-1}v$.
}
\label{fig:first-order}
\end{figure}

\begin{proof}
Let us first show that if $f$ is geodesically convex, then~\eqref{eq:first-order-condition} holds.
Geodesic convexity, by definition, implies that for any $p,q\in K$, any geodesic joining $p$ to $q$, $\gamma_{pq}$ with $\gamma_{pq}(0)=p$, $\gamma_{pq}(1)=q$ and $t\in(0,1]$,
\begin{equ}[eq:first-order-proof-0]
f(p) + \frac{f(\gamma_{pq}(t))-f(p)}{t} \leq f(q).
\end{equ}
Recall that
\begin{equ}
\dot{\gamma_{pq}}(f)(p) = \lim_{t\to 0}\frac{f(\gamma_{pq}(t))-f(p)}{t}.
\end{equ}
Hence, if we take limit the of~\eqref{eq:first-order-proof-0}, we get
\begin{equ}
f(p)+\dot{\gamma}_{pq}(f)(p)\leq f(q).
\end{equ}

\noindent
Now, let us assume that for any $p,q\in K$, and for any geodesic $\gamma_{pq}:[0,1]\to K$ that joins $p$ to $q$~\eqref{eq:first-order-condition} holds.
Given $p,q$ and $\gamma_{pq}$, let us fix a $t\in(0,1)$.
Let us denote $\gamma_{pq}(t)$ by $r$.
Next, let us consider curves $\alpha(u) := \gamma_{pq}(t+u(1-t))$ and $\beta(u):=\gamma_{pq}(t-ut)$.
These curves are a reparametrization of $\gamma_{pq}$ from $r$ to $q$ and $r$ to $p$.
Consequently, they are geodesics joining $r$ to $q$ and $r$ to $p$.
Their derivatives with respect to $u$ are 
$$\dot{\alpha}(0)=(1-t)\dot{\gamma_{pq}}(t)$$ and $$\dot{\beta}(0)=-t\dot{\gamma_{pq}}(t).$$
Thus, if we apply~\eqref{eq:first-order-condition} to $r,q$ and $\alpha$, then we get
\begin{equ}[eq:first-order-proof-1]
f(q) \geq \dot{\alpha}(f)(r) + f(r) = f(r) + (1-t)\dot{\gamma_{pq}}(f)(r) .
\end{equ}
Similarly, if we apply~\eqref{eq:first-order-condition} to $r,p$ and $\beta$, then we get
\begin{equ}[eq:first-order-proof-2]
f(p) \geq \dot{\beta}(f)(r) + f(r) = f(r) -t\dot{\gamma_{pq}}(f)(r) .
\end{equ}
If we multiply~\eqref{eq:first-order-proof-1} with $t$ and~\eqref{eq:first-order-proof-2} with $1-t$, and sum them up, then we get
\begin{equ}
tf(q)+(1-t)f(p) \geq f(r) = f(\gamma_{pq}(t)).
\end{equ}
Therefore, $f$ is geodesically convex.
\end{proof}

\begin{theorem}[\bf Second-order characterization of geodesically convex functions]
Let $(M, g)$ be a Riemannian manifold and $K\subseteq M$ be an open and totally convex set with respect to $g$.
A twice differentiable function $f:K\to\R$ is said to be a geodesically convex function with respect to $g$ if and only if for any $p,q\in K$, and for any geodesic $\gamma_{pq}$ that joins $p$ to $q$,
\begin{equ}
\frac{d^2 f(\gamma_{pq}(t))}{dt^2} \geq 0.
\end{equ}
\label{thm:g-convex-second-order}
\end{theorem}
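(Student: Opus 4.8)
The plan is to reduce the statement to the one-variable fact that a twice-differentiable function on an interval is convex if and only if its second derivative is everywhere nonnegative. For a geodesic $\gamma_{pq}:[0,1]\to K$ joining $p$ to $q$, I would introduce the single-variable function $h(t):=f(\gamma_{pq}(t))$, so that the quantity appearing in the statement is exactly $h''(t)=\frac{d^2 f(\gamma_{pq}(t))}{dt^2}$. The entire argument then hinges on the equivalence ``$f$ is geodesically convex $\iff$ for every geodesic the associated $h$ is convex on $[0,1]$,'' which is precisely the interpretation anticipated by the remark following \cref{def:g-convex-function}. Once this equivalence is in hand, the second-order test follows by transporting the standard one-dimensional criterion $h$ convex $\iff h''\ge 0$ along every geodesic.

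The easy direction is the ``if'' part. Assuming $h''\ge 0$ along every geodesic, I fix $p,q\in K$ and a geodesic $\gamma_{pq}$; since $h''\ge 0$ on $[0,1]$, the one-variable criterion makes $h$ convex, whence $h(t)\le (1-t)h(0)+t\,h(1)$, i.e.\ $f(\gamma_{pq}(t))\le (1-t)f(p)+t\,f(q)$ for all $t\in[0,1]$. As $p,q$ and $\gamma_{pq}$ were arbitrary, this is exactly geodesic convexity, and this direction uses nothing beyond the one-dimensional fact.

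For the ``only if'' part I assume $f$ is geodesically convex, fix a geodesic $\gamma_{pq}$, and set $h=f\circ\gamma_{pq}$; the goal is to show $h$ is convex on $[0,1]$, from which $h''\ge 0$ follows by the one-variable criterion. The crux—and the only place the argument is more than bookkeeping—is that every sub-arc of a geodesic, affinely reparametrized onto $[0,1]$, is again a geodesic: for $0\le a<b\le 1$ the curve $u\mapsto \gamma_{pq}(a+u(b-a))$ satisfies the defining equation $\nabla_{\dot{\gamma}}\dot{\gamma}=0$ because that equation is preserved under affine reparametrization (differentiating simply rescales $\dot{\gamma}$ by the constant $b-a$, and the vanishing is homogeneous). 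Applying the definition of geodesic convexity to this sub-geodesic yields $h(a+s(b-a))\le (1-s)h(a)+s\,h(b)$ for all $s\in[0,1]$, which is full convexity of $h$ on $[0,1]$; the standard second-order test then gives $h''(t)\ge 0$, completing the proof.

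I expect the main obstacle to be exactly this sub-arc step: the definition of geodesic convexity only compares $f$ to the chord through the \emph{endpoints} of each geodesic, so to upgrade this endpoint inequality to genuine convexity of $h$ on the whole interval one must know that restrictions-and-reparametrizations of geodesics remain geodesics (equivalently, that sub-arcs of distance-minimizing curves are distance-minimizing). Everything else is the routine transfer of the one-dimensional equivalence between convexity and a nonnegative second derivative.
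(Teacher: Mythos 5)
Your proof is correct, and its skeleton---reduce to the one-variable equivalence between convexity of $h=f\circ\gamma_{pq}$ and $h''\ge 0$---is the same as the paper's, which works with $\theta(t)=f(\gamma_{pq}(t))$ and invokes the second-order test for convex functions in both directions. Where you genuinely differ is that you isolate and prove the one step the paper elides: the paper passes directly from the endpoint inequality $\theta(t)\le(1-t)\theta(0)+t\,\theta(1)$ for all $t$, which only places $\theta$ below the single chord through $(0,\theta(0))$ and $(1,\theta(1))$, to the assertion that ``$\theta$ is a convex function'' on $[0,1]$; for a general function that implication is false, and what rescues it is precisely your sub-arc lemma---that $u\mapsto\gamma_{pq}(a+u(b-a))$ is again a geodesic, because $\nabla_{\dot{\gamma}}\dot{\gamma}=0$ is invariant under affine reparametrization ($\dot{\gamma}$ rescales by the constant $b-a$ and the equation is homogeneous in it)---so the defining inequality of \cref{def:g-convex-function} can be applied between \emph{every} pair of points $\gamma_{pq}(a),\gamma_{pq}(b)\in K$ on the curve, yielding the full chord condition $h(a+s(b-a))\le(1-s)h(a)+s\,h(b)$ and hence genuine convexity of $h$. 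This is the same reparametrization device the paper does spell out in its proof of the first-order characterization (\cref{thm:g-convex-first-order}, via the curves $\alpha$ and $\beta$) but silently reuses here, and making it explicit is what your writeup buys: a self-contained argument with no gap. The remaining differences are cosmetic---you prove the ``if'' direction directly where the paper argues by contraposition, and the paper's displayed inequality $0\ge\frac{d^2\theta(t)}{dt^2}$ in the forward direction has its sign reversed, evidently a typo that your version avoids.
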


\begin{proof}
Given $p,q\in K$ and $\gamma_{pq}$, let us define $\theta:[0,1]\to\R$ as $\theta(t)=f(\gamma_{pq}(t))$.
If $f$ is geodesically convex, then $\forall t\in[0,1]$,
\begin{equ}
f(\gamma_{pq}(t)) \leq (1-t) f(p) + t f(q)
\end{equ}
or equivalently
\begin{equ}
\theta(t) \leq (1-t) \theta(0) + t \theta(1).
\end{equ}
In other words, $\theta$ is a convex function.
The second order characterization of convex functions leads to
\begin{equ}
0\geq \frac{d^2 \theta(t)}{d t^2} = \frac{d^2 f(\gamma_{pq}(t))}{dt^2}.
\end{equ}
On the other hand, if $f$ is not geodesically convex, then there exist $p, q\in K$, a geodesic $\gamma_{pq}$ joining $p$ to $q$ and $t\in[0,1]$ such that
\begin{equ}
f(\gamma_{pq}(t)) > (1-t) f(p) + t f(q).
\end{equ}
Thus,
\begin{equ}
\theta(t) > (1-t) \theta(0) + t \theta(1).
\end{equ}
Therefore, $\theta$ is not convex and consequently there exists $u\in[0,1]$ such that
\begin{equ}
0>\frac{d^2 \theta(u)}{d t^2} = \frac{d^2 f(\gamma_{pq}(u))}{dt^2}.
\end{equ}
Hence, $f$ is geodesically convex if and only if
\begin{equ}
\frac{d^2 f(\gamma_{pq}(t))}{dt^2} \geq 0.
\end{equ}
\end{proof}

\subsection{Examples of geodesically convex functions}
In this section, we present some simple examples of geodesically convex functions.

\begin{example}[\bf Geodesically convex functions on the positive orthant]
Let us denote the set of multivariate polynomials with positive coefficients over $\R^n$ by $P^n_+$.
Also, let us recall that geodesics on $\R^n_{>0}$ are of the form $\exp(\alpha t+\beta)$ for $\alpha,\beta\in\R^n$ where $\exp$ function is evaluated independently on each coordinate.
\begin{itemize}
	\item 
	The log barrier function, $\inner{1, \log(x)}$ is both geodesically convex and concave, where $1$ denotes the vector whose coordinates are $1$. 
	We can verify this by observing that the restriction of $\inner{1, \log(x)}$ to a geodesic is $\inner{1, \alpha}t+\inner{1, \beta}$, a linear function of $t$.
	Thus, $\inner{1, \log(x)}$ is geodesically convex by the second order characterization of geodesically convex functions.
	
	\item 
	Multivariate polynomials with positive coefficients, $p\in P^n_+$, are geodesically convex.
	Let use denote monomial $\prod_{i=1}^n x^{\lambda_i}$ by $x^\lambda$ for $\lambda\in\Z_{\geq 0}^n$.
	The important observation is that if we  restrict a monomial to  a geodesic, then it has the form $\exp(\inner{\lambda, \alpha}t+\inner{\lambda, \beta})$ and its first derivative is $\inner{\lambda, \alpha}\exp(\inner{\lambda, \alpha}t + \inner{\lambda, \beta})$.
	Thus, the second derivative of any monomial is positive along any geodesic.
	Consequently, the second derivative of $p$ is positive along any geodesic since its coefficients are positive.
	Our observation can be also interpreted as when we compute the derivative of a monomial along a geodesic, the  result is a scaling of the same monomial.
	In contrast, the derivative along a straight line results in a polynomial whose degree is $1$ less than the differentiated monomial.  
	
	\item
	$\log(p(x))$ for $p\in P^n_+$.
	Let us fix a geodesic $\gamma(t):=\exp(\alpha t + \beta)$ for $\alpha,\beta\in\R^n$.
	Let us write $p(\gamma(t))$ as follows,
	\begin{equ}
	p(\gamma(t)) := \sum_{\theta\in \mathcal{F}} c_\theta \exp(\inner{\theta,\alpha}t+\inner{\theta, \beta})
	\end{equ}
	where $\mathcal{F}\subseteq\Z_{\geq 0}^n$ and $c_\theta$ is the coefficient of the monomial $\prod_{i=1}^n x_i^{\theta_i}$.
	Now, let us compute the Hessian of $\log(p(\gamma(t)))$,
	\begin{align*}
		\frac{d \log(p(\gamma(t)))}{dt} =& \frac{\sum_{\theta\in\mathcal{F}} c_\theta \inner{\theta,\alpha} \exp(\inner{\theta,\alpha}t+\inner{\theta,\beta})}{\sum_{\theta\in\mathcal{F}} c_\theta \exp(\inner{\theta,\alpha}t+\inner{\theta,\beta})}\\
		\frac{d^2 \log(p(\gamma(t)))}{dt^2} =& \frac{\sum_{\theta\in\mathcal{F}} c_\theta \inner{\theta,\alpha}^2 \exp(\inner{\theta,\alpha}t+\inner{\theta,\beta})}{\sum_{\theta\in\mathcal{F}} c_\theta \exp(\inner{\theta,\alpha}t+\inner{\theta,\beta})} \\ 
		&-\left(\frac{\sum_{\theta\in\mathcal{F}} c_\theta \inner{\theta,\alpha} \exp(\inner{\theta,\alpha}t+\inner{\theta,\beta})}{\sum_{\theta\in\mathcal{F}} c_\theta \exp(\inner{\theta,\alpha}t+\inner{\theta,\beta})}\right)^2\\
		=&\frac{\sum_{\theta,\theta'\in\mathcal{F}} 
		(c_\theta \inner{\theta,\alpha}-c_{\theta'}\inner{\theta',\alpha})^2 
		\exp(\inner{\theta,\alpha}t+\inner{\theta,\beta})\exp(\inner{\theta',\alpha}t+\inner{\theta',\beta})
		}
		{\left(\sum_{\theta\in\mathcal{F}} c_\theta \exp(\inner{\theta,\alpha}t+\inner{\theta,\beta})\right)^2}.
	\end{align*}
	Non-negativity of $\frac{d^2 \log(p(\gamma(t)))}{dt^2}$ follows from the non-negativity of $\exp(x)$.
	Thus, $\log(p(x))$ is geodesically convex by the second order characterization of geodesically convex functions.
\end{itemize}
\end{example}

\begin{proposition}[\bf Geodesic linearity of logdet]
$\log\det(X)$  is geodesically both convex and concave on $\Sn^n_{++}$ with respect to the metric $g_X(U, V):=\tr[X^{-1} U X^{-1} V]$.
\label{prop:log-det}
\end{proposition}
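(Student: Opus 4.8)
The plan is to invoke the second-order characterization of geodesic convexity from \cref{thm:g-convex-second-order}: a twice-differentiable function is geodesically convex if and only if its restriction to every geodesic has non-negative second derivative, and (applying the same criterion to the negated function) geodesically concave if and only if that second derivative is non-positive. Hence it suffices to show that along every geodesic the restriction $t\mapsto \log\det(\gamma(t))$ is an \emph{affine} function of $t$, so that $\frac{d^2}{dt^2}\log\det(\gamma(t))=0$ identically and both sign conditions hold at once.

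First I would recall from \cref{ex:pd-cone-geodesics} that the geodesic joining $P$ to $Q$ with respect to the metric $g_X(U,V)=\tr[X^{-1}UX^{-1}V]$ is
\begin{equ}
\gamma(t) = P^{1/2}(P^{-1/2}QP^{-1/2})^t P^{1/2}.
\end{equ}
Writing $R := P^{-1/2}QP^{-1/2}$, which is positive definite because $P,Q\in\Sn^n_{++}$, I would use multiplicativity of the determinant together with $\det(P^{1/2})^2=\det(P)$ to reduce the computation to $\det(\gamma(t)) = \det(P)\,\det(R^t)$.

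The one substantive step is evaluating $\det(R^t)$. Since $R\succ 0$ it is diagonalizable with positive eigenvalues $\lambda_1,\ldots,\lambda_n$, and the matrix power $R^t$, defined via the functional calculus as $R^t=\exp(t\log R)$, has eigenvalues $\lambda_i^t$. Therefore $\det(R^t)=\prod_{i=1}^n\lambda_i^t=\left(\prod_{i=1}^n\lambda_i\right)^t=\det(R)^t$, and since $\det(R)=\det(Q)/\det(P)$ this gives
\begin{equ}
\log\det(\gamma(t)) = \log\det(P) + t\,\log\det(R) = (1-t)\log\det(P)+t\log\det(Q),
\end{equ}
which is manifestly affine in $t$, so its second derivative vanishes everywhere.

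Finally, applying \cref{thm:g-convex-second-order} in both directions --- once to $\log\det$ and once to $-\log\det$ --- the identically vanishing second derivative shows that $\log\det$ is simultaneously geodesically convex and geodesically concave, as claimed. The only point demanding genuine care is the identity $\det(R^t)=\det(R)^t$, which I expect to be the main (though modest) obstacle; everything else reduces to multiplicativity of the determinant and to differentiating an affine function of $t$.
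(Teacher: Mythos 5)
Your proposal is correct and takes essentially the same approach as the paper: both substitute the explicit geodesic $\gamma(t)=P^{1/2}(P^{-1/2}QP^{-1/2})^{t}P^{1/2}$ from \cref{ex:pd-cone-geodesics} into $\log\det$ and observe that $\log\det(\gamma(t))=(1-t)\log\det(P)+t\log\det(Q)$ is affine in $t$. The only differences are cosmetic --- you explicitly justify the identity $\det(R^{t})=\det(R)^{t}$ via the eigenvalues of $R\succ 0$, which the paper leaves implicit, and you conclude through the second-order characterization of \cref{thm:g-convex-second-order}, whereas the paper reads convexity and concavity directly off the affine identity from \cref{def:g-convex-function}.
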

\begin{proof}
Let $X, Y\in\Sn^n_{++}$ and $t\in[0,1]$. 
Then, the geodesic joining $X$ to $Y$ is 
$$\gamma(t)=X^{1/2}(X^{-1/2}YX^{-1/2})^t X^{1/2},$$ see \cref{ex:pd-cone-geodesics}.
Thus, 
\begin{equ}
\log\det(\gamma(t)) = \log\det(X^{1/2} (X^{-1/2}Y X^{-1/2})^t X^{1/2}) = (1-t)\log\det(X) + t\log\det(Y).
\end{equ}
Therefore, $\log\det(X)$ is a geodesically linear function over the positive definite cone with respect to the metric $g$.
\end{proof}

\begin{proposition}
Let $T(X)$ be a strictly positive linear operator from $\Sn^n_{++}$ to $\Sn^m_{++}$, in other words it maps positive definite matrices to positive definite matrices.
$T(X)$ is geodesically convex with respect to Loewner partial ordering on $\Sn^m_{++}$ over $\Sn^n_{++}$ with respect to metric $g_X(U, V):=\tr[X^{-1} U X^{-1} V]$.
In other words, for any geodesic, $\gamma:[0,1]\to\Sn^n_{++}$,
\begin{equ}[eq:pd-linear-convexity]
\forall t\in[0,1],\;\;T(\gamma(t))\preceq (1-t)T(\gamma(0)) + t T(\gamma(1)).
\end{equ}
\end{proposition}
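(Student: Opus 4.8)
The plan is to reduce the claim to a single Loewner-order inequality between the geodesic and the Euclidean chord, and then push that inequality through $T$ using only two structural facts: that $T$ is linear and that it preserves the positive semidefinite order. Write $P := \gamma(0)$ and $Q := \gamma(1)$. By~\cref{ex:pd-cone-geodesics} the geodesic has the closed form
\begin{equ}
\gamma(t) = P^{1/2}(P^{-1/2}QP^{-1/2})^t P^{1/2}.
\end{equ}
The heart of the argument is the claim that this curve lies below the straight segment in the Loewner order, namely
\begin{equ}[eq:geod-below-chord]
\gamma(t) \preceq (1-t)P + tQ \qquad \text{for all } t\in[0,1].
\end{equ}
Granting~\eqref{eq:geod-below-chord}, the proposition follows immediately. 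Since $T$ is a linear map between finite-dimensional spaces it is continuous, and as it carries the open cone $\Sn^n_{++}$ into $\Sn^m_{++}$ it carries the closure, the positive semidefinite cone, into the positive semidefinite cone. Hence $A\preceq B$ implies $T(B)-T(A)=T(B-A)\succeq 0$, so $T$ is monotone with respect to the Loewner order. Applying $T$ to~\eqref{eq:geod-below-chord} and using linearity to split the right-hand side gives
\begin{equ}
T(\gamma(t)) \preceq T((1-t)P + tQ) = (1-t)T(P) + tT(Q),
\end{equ}
which is exactly~\eqref{eq:pd-linear-convexity}.

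To prove~\eqref{eq:geod-below-chord} I would conjugate by $P^{-1/2}$ to strip off the outer factors. Setting $Z := P^{-1/2}QP^{-1/2}$, which is symmetric positive definite, the inequality~\eqref{eq:geod-below-chord} is congruent via the invertible symmetric matrix $P^{1/2}$ to
\begin{equ}
Z^t \preceq (1-t)I + tZ.
\end{equ}
Diagonalizing $Z = U\,\diag(\lambda_1,\dots,\lambda_n)\,U^\top$ with each $\lambda_i>0$, this reduces eigenvalue-by-eigenvalue to the scalar bound $\lambda^t \leq (1-t) + t\lambda$ for every $\lambda>0$ and $t\in[0,1]$. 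This last inequality is precisely the weighted arithmetic mean--geometric mean inequality applied to $\lambda$ and $1$ with weights $t$ and $1-t$ (equivalently, the tangent-line bound at $s=1$ for the concave function $s\mapsto s^t$ on $(0,\infty)$, whose tangent line is exactly $(1-t)+ts$), and so it holds for all such $\lambda$ and $t$.

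I expect the main obstacle to be establishing~\eqref{eq:geod-below-chord} cleanly, since it is an operator inequality rather than a scalar one. The key maneuver is the simultaneous congruence by $P^{1/2}$: this both removes the outer factors from the geodesic and preserves the Loewner order, because $A\mapsto P^{1/2}AP^{1/2}$ is order-preserving whenever $P^{1/2}$ is invertible and symmetric (for any $v$, $v^\top P^{1/2}(B-A)P^{1/2}v = (P^{1/2}v)^\top(B-A)(P^{1/2}v)\geq 0$). Once this reduction to the eigenvalue-wise scalar bound is in place, the transfer through $T$ is routine, relying only on the linearity of $T$ and the monotonicity that follows from its preservation of positive (semi)definiteness.
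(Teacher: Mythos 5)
Your proposal is correct, but it takes a genuinely different route from the paper's. The paper argues via the second-order criterion: it parameterizes the geodesic as $\gamma(t)=P^{1/2}\exp(tQ)P^{1/2}$, differentiates twice to obtain $\frac{d^2}{dt^2}T(\gamma(t))=T(P^{1/2}Q\exp(tQ)QP^{1/2})$, and concludes positivity from the strict positivity of $T$, which yields \eqref{eq:pd-linear-convexity} by applying the scalar second-order test to $t\mapsto v^\top T(\gamma(t))v$ for each fixed vector $v$. You instead isolate the $T$-independent content as the pointwise operator inequality $\gamma(t)\preceq(1-t)P+tQ$ --- the weighted operator AM--GM inequality for the matrix geometric mean, proved by congruence with $P^{1/2}$ and spectral reduction to the scalar bound $\lambda^t\leq(1-t)+t\lambda$ --- and then transport it through $T$ using only linearity and Loewner monotonicity, which you correctly derive from continuity of $T$ and the fact that the positive semidefinite cone is the closure of $\Sn^n_{++}$. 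Your route is purely algebraic, avoids calculus entirely, and handles degenerate tangent directions uniformly; indeed the paper's version is slightly imprecise on exactly this point, asserting that $P^{1/2}Q\exp(tQ)QP^{1/2}$ is positive \emph{definite}, which fails when the symmetric direction $Q$ is singular --- there one only gets positive semidefiniteness and needs the same closure argument you spelled out. What the paper's derivative computation buys in exchange is reusable machinery: the same formulas for $\frac{d}{dt}T(\gamma(t))$ and $\frac{d^2}{dt^2}T(\gamma(t))$ feed directly into the proof of geodesic convexity of $\log\det T(X)$ in \cref{prop:log-det-2}, where the outer function is nonlinear and your chord argument does not transfer as stated (monotonicity of $\log\det$ combined with its \emph{concavity} pushes the chord bound in the wrong direction), so the two approaches are complementary rather than interchangeable.
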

\begin{proof}
Any linear operator on $\Sn^n_{++}$ can be written as $T(X):=\sum_{i=1}^d A_i X B_i$ for some $m\times n$ matrices $A_i$ and $n\times m$ matrices $B_i$.
If we can show that $\frac{d^2}{dt^2} T(\gamma(t))$ is positive definite, then this implies~\eqref{eq:pd-linear-convexity}.
Let us consider the geodesic $\gamma(t):=P^{1/2}\exp(tQ)P^{1/2}$ for $P\in\Sn^n_{++}$ and $Q\in\Sn^n$.
The second derivative of $T$ along $\gamma$ is
\begin{align*}
\frac{d T(\gamma(t))}{d t} =& \sum_{i=1}^d A_i P^{1/2} Q\exp(tQ)P^{1/2} B_i\\
\frac{d^2 T(\gamma(t))}{d t^2} =& \sum_{i=1}^d A_i P^{1/2} Q\exp(tQ)QP^{1/2} B_i\\
					=& T(P^{1/2} Q\exp(tQ)QP^{1/2}).
\end{align*}
We can see that $P^{1/2} Q\exp(tQ)QP^{1/2}$ is positive definite for $t\in[0,1]$ and $$T(P^{1/2} Q\exp(tQ)QP^{1/2})$$ is also positive definite as $T$ is a strictly positive linear map.
Consequently, $\frac{d^2}{dt^2} T(\gamma(t))$ is positive definite, and~\eqref{eq:pd-linear-convexity} holds.
\end{proof}

\begin{proposition}[\bf Geodesic convexity of logdet of  positive operators]
Let $T(X)$ be a strictly positive linear operator.
Then, 
$\log\det(T(X))$ is geodesically convex on $\Sn^n_{++}$ with respect to the metric $g_X(U, V):=\tr[X^{-1} U X^{-1} V]$.
\label{prop:log-det-2}
\end{proposition}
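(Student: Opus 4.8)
The plan is to verify the second-order condition of \cref{thm:g-convex-second-order}: it suffices to show that along every geodesic $\gamma$ in $\Sn^n_{++}$ the scalar function $f(t) := \log\det T(\gamma(t))$ satisfies $\ddot f(t)\geq 0$. Fix a geodesic $\gamma(t) = P^{1/2}\exp(tQ)P^{1/2}$ with $P\in\Sn^n_{++}$, $Q\in\Sn^n$, and write $A(t) := T(\gamma(t))\succ0$. Because $T$ is linear, $\dot A = T(\dot\gamma)$ and $\ddot A = T(\ddot\gamma)$, exactly as in the computation in the preceding proposition; moreover the geodesic equation derived in \cref{ex:pd-cone-geodesics}, namely $\tfrac{d}{dt}(\dot\gamma\gamma^{-1})=0$, gives $\ddot\gamma = \dot\gamma\,\gamma^{-1}\dot\gamma$, so $\ddot A = T(\dot\gamma\,\gamma^{-1}\dot\gamma)$. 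Differentiating $\log\det$ twice yields $\dot f = \tr[A^{-1}\dot A]$ and $\ddot f = \tr[A^{-1}\ddot A] - \tr[A^{-1}\dot A A^{-1}\dot A]$.

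Next I would reduce the nonnegativity of $\ddot f$ to a single operator inequality. Writing $W := \dot\gamma$, I claim it suffices to prove
\[
T(W)\,T(\gamma)^{-1}\,T(W)\ \preceq\ T\!\left(W\gamma^{-1}W\right),
\]
that is $\dot A A^{-1}\dot A \preceq \ddot A$. Granting this, the matrix $\ddot A - \dot A A^{-1}\dot A$ is positive semidefinite, and since $A^{-1}\succ0$ we get $\ddot f = \tr[A^{-1}(\ddot A - \dot A A^{-1}\dot A)] = \tr[A^{-1/2}(\ddot A - \dot A A^{-1}\dot A)A^{-1/2}]\geq 0$, because the trace of a positive semidefinite matrix is nonnegative. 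By \cref{thm:g-convex-second-order} this proves geodesic convexity.

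The heart of the argument---and the step I expect to be the main obstacle---is the operator inequality above, a Kadison--Schwarz/Choi-type bound for the positive map $T$. I would establish it via a Schur-complement argument: since $\gamma\succ0$, the block matrix
\[
\begin{pmatrix} \gamma & W \\ W & W\gamma^{-1}W \end{pmatrix}
= \begin{pmatrix} I \\ W\gamma^{-1}\end{pmatrix}\gamma\begin{pmatrix} I & \gamma^{-1}W \end{pmatrix}
\]
is positive semidefinite. Applying $T$ entrywise to the four blocks preserves positive semidefiniteness provided $T$ is $2$-positive, and the Schur complement of the $(1,1)$ block of the resulting matrix $\begin{pmatrix} T(\gamma) & T(W) \\ T(W) & T(W\gamma^{-1}W)\end{pmatrix}$ is precisely $T(W\gamma^{-1}W) - T(W)T(\gamma)^{-1}T(W)\succeq0$, the desired inequality.

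The delicate point is exactly this $2$-positivity: mere positivity (mapping $\Sn^n_{++}$ into $\Sn^m_{++}$) is not in general enough to guarantee that $T\otimes\mathrm{id}_2$ preserves positivity. In the operator-scaling applications the relevant maps are completely positive, of the form $T(X)=\sum_i A_i X A_i^{*}$, which are $k$-positive for every $k$; for such $T$ the block-matrix step goes through verbatim (equivalently, the inequality becomes the operator Cauchy--Schwarz inequality for the operators $A_i$). I would therefore carry out the proof under complete positivity, which covers all the cases of interest, and flag that this hypothesis is what makes the crucial operator inequality valid.
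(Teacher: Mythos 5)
Your reduction coincides with the paper's up to the crucial last step: the second-order criterion of \cref{thm:g-convex-second-order}, the computation of $\ddot f = \tr[A^{-1}\ddot A] - \tr[A^{-1}\dot A A^{-1}\dot A]$, and the Schur-complement reformulation of $\ddot A \succeq \dot A A^{-1}\dot A$ all match. The gap is where you invoke $2$-positivity and then retreat to completely positive $T$: the proposition is stated for an \emph{arbitrary} strictly positive linear map, and your proof does not deliver that generality. Worse, your closing diagnosis --- that complete positivity ``is what makes the crucial operator inequality valid'' --- is wrong. Because $W=\dot\gamma$ is \emph{symmetric}, the inequality you need involves only self-adjoint data, and for self-adjoint arguments Kadison's inequality holds for merely positive unital maps; $2$-positivity is needed only for the Schwarz inequality with non-self-adjoint arguments. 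Your unnormalized block matrix $\bigl(\begin{smallmatrix}\gamma & W\\ W & W\gamma^{-1}W\end{smallmatrix}\bigr)$ has non-commuting blocks, and that is precisely what forced you into assuming $2$-positivity.

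The missing idea is a normalization that makes the blocks commute, which is how the paper proceeds. With $P:=\gamma(t)$, define $T'(X):= T(P)^{-1/2}\,T(P^{1/2}XP^{1/2})\,T(P)^{-1/2}$; this map is again positive (on all of $\Sn^n_{\geq 0}$, by continuity) and is \emph{unital}, $T'(I)=I$. Setting $X:=P^{-1/2}WP^{-1/2}$, which is symmetric, a congruence by $T(P)^{1/2}$ shows that your target $T(W\gamma^{-1}W)\succeq T(W)T(\gamma)^{-1}T(W)$ is exactly $T'(X^2)\succeq T'(X)^2$. Now all three blocks of
\[
\begin{bmatrix} T'(X^2) & T'(X)\\ T'(X) & I \end{bmatrix}
\]
are images of commuting matrices $X^2$, $X$, $I$, so one can expand spectrally, $X=\sum_i \lambda_i u_iu_i^\top$, and write the block matrix as $\sum_i \bigl(\begin{smallmatrix}\lambda_i^2 & \lambda_i\\ \lambda_i & 1\end{smallmatrix}\bigr)\otimes U_i$ with $U_i:=T'(u_iu_i^\top)\succeq 0$ and $\sum_i U_i = T'(I)=I$. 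Each summand is a rank-one positive semidefinite $2\times 2$ matrix tensored with a positive semidefinite matrix, hence positive semidefinite --- using nothing beyond positivity and unitality of $T'$. This spectral decomposition (Kadison's classical argument, which the paper reproduces with the Schur-complement observation $\lambda_i^2 U_i - (\lambda_i U_i)U_i^{-1}(\lambda_i U_i)=0$) is what closes exactly the case your CP hypothesis excludes.
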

\begin{proof}
Let us write $T(X)$ as 
$$T(X):=\sum_{i=1}^d A_i X B_i$$ for some $m\times n$ matrices $A_i$ and $n\times m$ matrices $B_i$.
We need to show that the Hessian of $\log\det(T(X))$ is positive semi-definite along any geodesic.
Let us consider the geodesic 
$$\gamma(t):=P^{1/2}\exp(tQ)P^{1/2}$$  for $P\in\Sn^n_{++}$ and $Q\in\Sn$.
The second derivative of $\log\det(T(X))$ along $\gamma$ is
\begin{align*}
\frac{d \log\det(T(\gamma(t)))}{d t} 
	=& \tr\left[T(\gamma(t))^{-1}\frac{d}{dt} T(\gamma(t))\right]\\					
\frac{d^2 \log\det(T(\gamma(t)))}{d t^2}
	=& \tr\left[-T(\gamma(t))^{-1}\frac{d}{dt} T(\gamma(t))T(\gamma(t))^{-1}\frac{d}{dt} T(\gamma(t)) + T(\gamma(t))^{-1}\frac{d^2}{dt^2}T(\gamma(t))\right].
\end{align*}
We need to only verify that 
$$\left.\frac{d^2 \log\det(T(\gamma(t)))}{d t^2}\right\rvert_{t=0}\geq0.$$
In other words, we need to show that 
\begin{equ}
\tr\left[T(P)^{-1}\left(T(P^{1/2}Q^2 P^{1/2}) -T(P^{1/2}QP^{1/2})T(P)^{-1} T(P^{1/2}QP^{1/2})\right)\right]\geq 0.
\end{equ}
In particular, if we show that
\begin{equ}[eq:need-to-show]
T(P^{1/2}Q^2 P^{1/2}) \succeq T(P^{1/2}QP^{1/2})T(P)^{-1} T(P^{1/2}QP^{1/2}),
\end{equ}
then we are done.
Let us define another strictly positive linear operator
\begin{equ}
T'(X):=T(P)^{-1/2} T(P^{1/2}XP^{1/2})T(P)^{-1/2}.
\end{equ}
If $T'(X^2)\succeq T'(X)^2$, then by picking $X=Q$ we arrive at~\eqref{eq:need-to-show}.
This inequality is an instance of  Kadison's inequality, see~\cite{bhatia2009positive} for more details.
A classical result from matrix algebra is $A\succeq BD^{-1}C$ if and only if
\begin{equ}
\begin{bmatrix}
A&B\\
C&D
\end{bmatrix}\succeq 0
\end{equ}
where $A$ and $D$ square matrices, $B$ and $C$ are compatible sized matrices and $D$ is invertible.
The quantity is called \emph{Schur complement}, $A-BD^{-1}C$.
Thus, we need to verify that 
\begin{equ}
\begin{bmatrix}
T'(X^2)&T'(X)\\
T'(X)&I_m
\end{bmatrix}\succeq 0.
\end{equ}
Let $\{u_i\}_{i=1}^n$ be eigenvalues of $X$ that form an orthonormal basis and $\{\lambda_i\}$ be the corresponding eigenvalues.
Then, 
\begin{equ}
X=\sum_{i=1}^n \lambda_i u_iu_i^\top,
\end{equ}
and
\begin{equ}
X^2=\sum_{i=1}^n \lambda_i^2 u_i u_i^\top.
\end{equ}
Let us denote $T'(u_iu_i^\top)$ by $U_i$, then 
\begin{equ}
I_m = T'(I_n) = T'(\sum_{i=1}^n u_iu_i^\top) = \sum_{i=1}^n U_i
\end{equ}
and consequently,
\begin{equ}
\begin{bmatrix}
T'(X^2)&T'(X)\\
T'(X)&I_m
\end{bmatrix}
=
\begin{bmatrix}
T'(\sum_{i=1}^n\lambda_i^2 u_iu_i^\top)&T'(\sum_{i=1}^n \lambda_i u_iu_i^\top)\\
T'(\sum_{i=1}^n\lambda_i u_iu_i^\top)&\sum_{i=1}^n U_i
\end{bmatrix}
=
\sum_{i=1}^n
\begin{bmatrix}
\lambda_i^2 U_i& \lambda_i U_i\\
\lambda_i U_i& U_i
\end{bmatrix}.
\end{equ}
Since $\lambda_i^2 U_i - (\lambda_i U_i)U_i^{-1}(\lambda_i U_i) = 0$, 
\begin{equ}
\sum_{i=1}^n
\begin{bmatrix}
\lambda_i^2 U_i& \lambda_i U_i\\
\lambda_i U_i& U_i
\end{bmatrix}\succeq 0.
\end{equ}
Thus, $T'(X^2)\succeq T'(X)$ for any $X\in\Sn^n$.
Therefore,~\eqref{eq:need-to-show} holds and $\log\det(X)$ is a geodesically convex function.
\end{proof}

\subsection{What functions are not geodesically convex?}
A natural question one can ask is that given a manifold $M$ and a smooth function $f:M\to\R$, does there exist a metric $g$ such that $f$ is geodesically convex on $M$ with respect to $g$.
In general, verifying if the function $f$ is geodesically convex or not with respect to the metric $g$ is relatively easy.
 However, arguing that no such metric exists is not so easy.
 We already saw that  non-convex functions can be geodesically convex with respect to  metrics induced by the Hessian of  log-barrier functions.
Functions that are not geodesically convex with respect to these metrics can still be geodesically convex with respect to other metrics.

To prove that something is not geodesically convex for any metric, we revisit convexity.
We know that any local minimum of a convex function is also its global minimum.
This property also extends to geodesically convex functions.
Consequently, one class of functions that are not geodesically convex with respect to any metric are functions that have a local minimum that is  not a global minimum.

\begin{proposition}[\bf Establishing non-geodesic convexity]
Let $M$ be a smooth manifold, and $f$ be a function such that there exists $p\in M$ and an open neighborhood of $p$, $U_p$, such that 
\begin{equ}[eq:local-minima]
f(p) = \inf_{q\in U_p} f(q)
\end{equ}
but
\begin{equ}[eq:not-global-minima]
f(p) > \inf_{q\in M} f(q).
\end{equ}
Then there is no metric tensor $g$ on $M$ such that $f$ is geodesically convex with respect to $g$.
\label{prop:geodesically-non-convex-functions}
\end{proposition}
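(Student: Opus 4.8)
The plan is to argue by contradiction, exploiting the fact that geodesic convexity forces every local minimum to be a global minimum. So suppose, contrary to the claim, that there is a metric tensor $g$ for which $f$ is geodesically convex on $M$ (in particular, $M$ itself is totally convex with respect to $g$ in the sense of \cref{def:g-convex-set}). Since $f(p) > \inf_{q\in M} f(q)$ by \eqref{eq:not-global-minima}, I may fix a point $q\in M$ with $f(q) < f(p)$; the whole argument then aims to contradict the local-minimality hypothesis \eqref{eq:local-minima} at $p$.

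First I would produce a geodesic $\gamma\colon[0,1]\to M$ with $\gamma(0)=p$ and $\gamma(1)=q$, and apply the definition of a geodesically convex function (\cref{def:g-convex-function}) along it. This yields $f(\gamma(t)) \le (1-t)f(p) + t\,f(q)$ for every $t\in[0,1]$. Because $f(p)-f(q) > 0$, the right-hand side equals $f(p) - t\bigl(f(p)-f(q)\bigr)$, which is strictly less than $f(p)$ for every $t\in(0,1]$. Hence $f(\gamma(t)) < f(p)$ throughout $(0,1]$.

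Next I would localize near $p$. As $\gamma$ is a smooth, and in particular continuous, curve with $\gamma(0)=p$, there is some $\delta>0$ with $\gamma(t)\in U_p$ for all $t\in[0,\delta)$. Choosing any $t_0\in(0,\delta)$, the point $\gamma(t_0)$ lies in $U_p$ and therefore satisfies $f(\gamma(t_0)) \ge \inf_{q\in U_p} f(q) = f(p)$ by \eqref{eq:local-minima}. This directly contradicts $f(\gamma(t_0)) < f(p)$ established above, so no metric $g$ making $f$ geodesically convex can exist.

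The main obstacle is the very first step: guaranteeing a geodesic that joins $p$ to the strictly-better point $q$. For two arbitrary points of a general Riemannian manifold no connecting geodesic need exist, in which case the inequality in \cref{def:g-convex-function} is vacuous for the pair $(p,q)$. To close this gap I would either invoke geodesic completeness, via the Hopf–Rinow theorem, which supplies a minimizing geodesic between any two points of a connected complete manifold, or else take the existence of such connecting geodesics as part of the standing assumptions on $(M,g)$. Everything downstream, namely the convexity of $f$ restricted to $\gamma$ (consistent with \cref{thm:g-convex-second-order}) and the continuity argument placing $\gamma(t)$ inside $U_p$ for small $t$, is entirely routine.
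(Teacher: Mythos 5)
Your proof is correct and follows essentially the same route as the paper's: assume a metric $g$ exists, pick $q$ with $f(q)<f(p)$, use geodesic convexity along a geodesic $\gamma$ from $p$ to $q$ to force $f(\gamma(t))<f(p)$ for all $t\in(0,1]$, and contradict local minimality \eqref{eq:local-minima} via continuity of $\gamma$ near $t=0$. If anything you are more careful than the paper, which silently posits a geodesic joining $p$ to $q$; your observation that this requires completeness (Hopf--Rinow) or a standing connectivity assumption flags a gap the paper's own proof leaves implicit.
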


\noindent
\eqref{eq:local-minima} corresponds to $p$ being a local minimum of $f$, while~\eqref{eq:not-global-minima} corresponds to $p$ not being a global minimum of $p$.

\begin{proof}
Let us assume that there exists a metric $g$ such that $f$ is geodesically convex with respect to $g$.
Let $q\in M$ be such that $f(q)<f(p)$ and $\gamma:[0,1]\to M$ be a geodesic such that $\gamma(0)=p$ and $\gamma(1)=q$.
Since $f$ is geodesically convex, we have
\begin{equ}
\forall t\in[0,1],\;\;f(\gamma(t))\leq (1-t)f(\gamma(0)) + tf(\gamma(1)) = (1-t)f(p)+tf(q)<f(p).
\end{equ}  
For some $t_0\in (0,1]$, $\gamma(t)\in U_p$ for $t\in[0,t_0)$ as $\gamma$ is smooth.
Then, $$f(\gamma(t))\geq f(p) \mbox{  for  } t\in[0, t_0)$$ by~\eqref{eq:local-minima}.
This is a contradiction.
Therefore, there is no metric $g$ on $M$ such that $f$ is geodesically convex with respect to $g$.
\end{proof}

\begin{figure}[!hbt]
\centering
\includegraphics[keepaspectratio, width=0.7\textwidth]{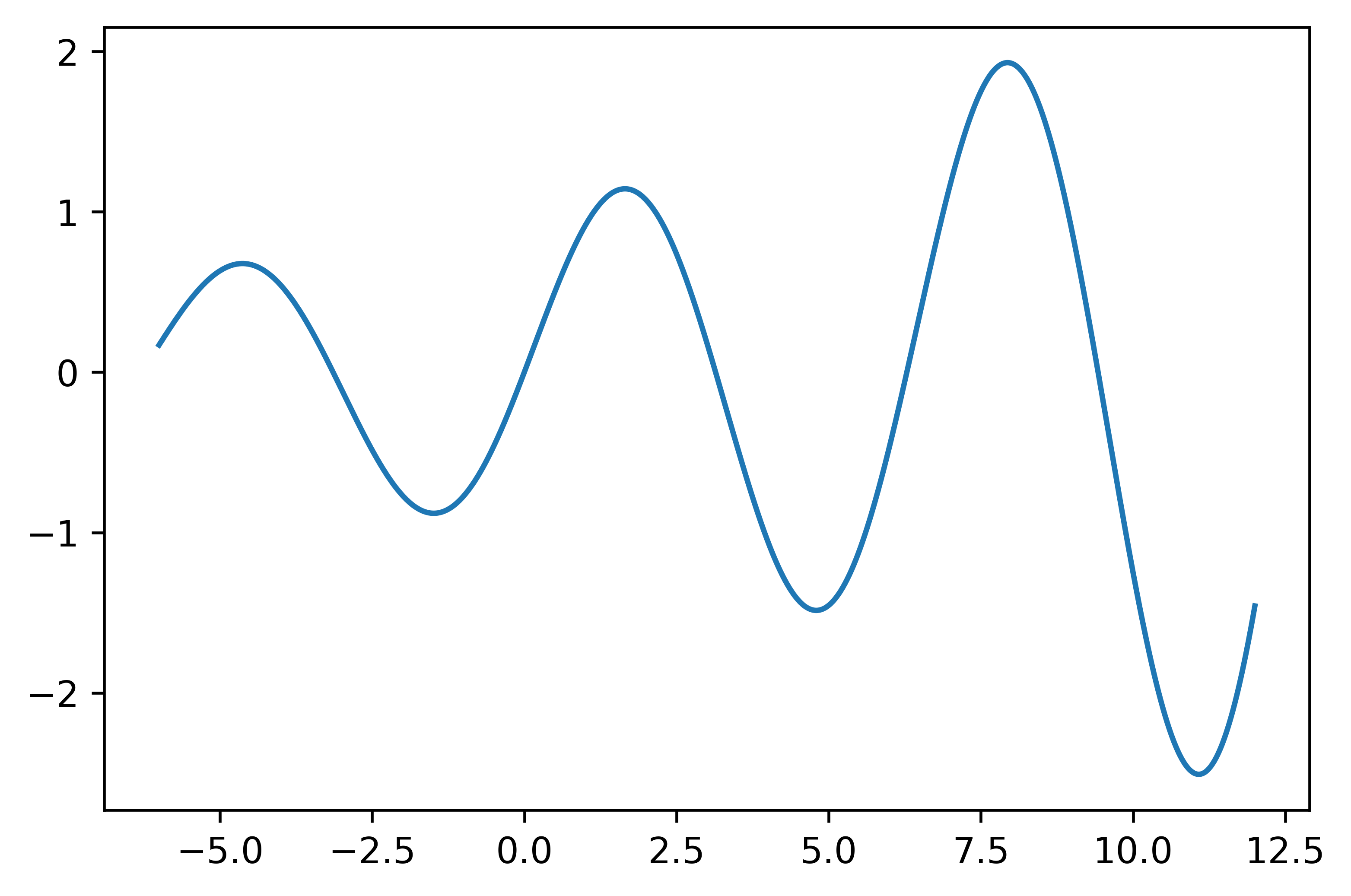}
\caption{%
A function that is not geodesically convex on $\R$ -- $\sin(x)\exp(x/12)$.
}
\label{fig:non-g-convex-function}
\end{figure}

\section{Application: Characterizing Brascamp-Lieb Constants}
\label{sec:BL}
The Brascamp-Lieb inequality~\cite{BL76} is a powerful mathematical tool which unifies most of the well-known classical inequalities with  a single formulation.
The inequality can be described as follows.
\begin{definition}[\bf Brascamp-Lieb inequality~\cite{BL76}]
Given linear transformations $B=(B_j)_{j=1}^m$ where $B_j$ is a linear transformation from $\R^n$ to $\R^{n_j}$ and non-negative real numbers $p=(p_j)_{j=1}^m$, there exists a number $C\in \R$ such that for any given function tuple $f=(f_j)_{j=1}^m$ where $f_j:\R^{n_j}\rightarrow \R_+$ is a Lebesgue measurable function, the following inequality holds:
\begin{equ}[eq:BL-inequality]
\int_{x\in\R^n} \prod_{j=1}^m f_j(B_j x)^{p_j} dx \leq C \prod_{j=1}^m\left(\int_{x\in\R^{n_j}} f_j(x)dx\right)^{p_j}.
\end{equ}
The smallest $C$ such that~\eqref{eq:BL-inequality} holds for any $f$ is called the Brascamp-Lieb constant and we denote it  by $\mathrm{BL}(B,p)$. 
$(B,p)$ is called ``feasible'' if $C$ is finite.
\end{definition}
\noindent
Lieb~\cite{Lie90} showed that~\eqref{eq:BL-inequality} is saturated by Gaussian inputs: $g_j(x):=\exp(-x^\top A_j x)$ for some positive definite matrix $A_j$ for all $j$.
Evaluating~\eqref{eq:BL-inequality} with inputs $g_j$ leads to the following formulation of the Brascamp-Lieb inequality:
\begin{equ}
\left(\frac{\prod_{j=1}^m\det(A_j)^{p_j}}{\det(\sum_{j=1}^m p_j B_j^\top A_j B_j)}\right)^{1/2} \leq C.
\end{equ}
Also, Bennett {\em et al.}~\cite{bennett2008brascamp} characterized the necessary and sufficient conditions for a Brascamp-Lieb datum to be feasible.
\begin{theorem}[\bf Feasibility of Brascamp-Lieb datum \cite{bennett2008brascamp}, Theorem 1.15]
Let $(B,p)$ be a Brascamp-Lieb  datum with $B_j\in\R^{n_j\times n}$ for each $j=1,\hdots,m$. 
Then, $(B,p)$ is feasible if and only if following conditions hold:
\begin{enumerate}
	\item $n=\sum_{j=1}^m p_jn_j$, and
	\item $\dim(V)\leq \sum_{j=1}^m p_j \dim(B_j V)$ for any subspace $V$ of $\R^n$.
\end{enumerate}
\label{thm:feasibility}
\end{theorem}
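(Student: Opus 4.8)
The plan is to work with Lieb's Gaussian reduction, stated just above: by Lieb's theorem~\cite{Lie90} the Brascamp--Lieb constant equals the value of the finite-dimensional variational problem
\[
\bl(B,p)^2=\sup_{A_1,\dots,A_m\succ 0}\frac{\prod_{j=1}^m\det(A_j)^{p_j}}{\det\!\big(\sum_{j=1}^m p_j B_j^\top A_j B_j\big)}.
\]
Writing $M(A):=\sum_j p_j B_j^\top A_j B_j$, feasibility is therefore equivalent to the determinantal inequality $\det M(A)\ge c\prod_j\det(A_j)^{p_j}$ holding for some constant $c=c(B,p)>0$ and all $A_j\succ0$. I would prove the two conditions are necessary for this and then that together they are sufficient.

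\textbf{Necessity.} For condition~1, apply the global scaling $A_j\mapsto tA_j$: the numerator scales by $t^{\sum_j p_j n_j}$ and $\det M(A)$ by $t^{n}$, so the ratio scales by $t^{\sum_j p_j n_j-n}$; since $t$ ranges over $(0,\infty)$ the supremum is infinite unless the exponent vanishes, giving $n=\sum_j p_j n_j$. For condition~2, fix a subspace $V$, let $P_j$ be the orthogonal projection of $\R^{n_j}$ onto $B_jV$, and test with $A_j(s):=P_j+s(I-P_j)\succ0$. Then $\det A_j(s)=s^{\,n_j-\dim(B_jV)}$, so the numerator equals $s^{\sum_j p_j(n_j-\dim(B_jV))}=s^{\,n-\sum_j p_j\dim(B_jV)}$ by condition~1. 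Writing $M(A(s))=M_0+sN'$ where $N'=\sum_j p_j B_j^\top(I-P_j)B_j\succeq0$ and $M_0$ is independent of $s$, every $x\in V$ satisfies $B_jx\in B_jV$, hence $(I-P_j)B_jx=0$, so $V\subseteq\ker N'$ and $\rank N'\le n-\dim V$. Exactly $\rank N'$ eigenvalues of $M_0+sN'$ grow linearly while the rest stay bounded, whence $\det M(A(s))=O(s^{\rank N'})=O(s^{\,n-\dim V})$. Dividing, the ratio is $\gtrsim s^{\dim V-\sum_j p_j\dim(B_jV)}$, which forces $\dim V\le\sum_j p_j\dim(B_jV)$ for the supremum to be finite.

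\textbf{Sufficiency (the determinantal inequality).} Factor $A_j=C_j^\top C_j$ and stack the blocks $\sqrt{p_j}\,C_jB_j$ into a single $(\sum_j n_j)\times n$ matrix $\tilde B$, so that $M(A)=\tilde B^\top\tilde B$. Cauchy--Binet expands $\det M(A)=\sum_{|S|=n}\det(\tilde B_S)^2$ over all choices $S$ of $n$ rows among the block-coordinates, and since every term is nonnegative I may keep any sub-collection. Applying weighted AM--GM with nonnegative weights $w_S$ summing to $1$ gives $\det M(A)\ge\prod_S\big(\det(\tilde B_S)^2/w_S\big)^{w_S}$, a quantity homogeneous of degree $\sum_S w_S k_j(S)$ in each $A_j$, where $k_j(S)$ is the number of coordinates $S$ takes from block $j$. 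The goal is to choose $w$ so that this product is bounded below by $c\prod_j\det(A_j)^{p_j}$; matching homogeneity in each block requires the marginal constraints $\sum_S w_S k_j(S)=p_j n_j$, and summing these over $j$ and using condition~1 (which gives $\sum_j p_j n_j=n$) shows the normalization $\sum_S w_S=1$ is automatically consistent. The content is that such weights exist and can be supported on selections with $\det(\tilde B_S)\neq0$, which makes the resulting constant $c$ strictly positive; this is precisely a feasibility (transportation / fractional-matching) question whose Hall/Rado-type solvability condition is the family of subspace inequalities in condition~2.

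\textbf{The main obstacle} is carrying out this last step for general block dimensions. When all $n_j=1$ the selections $S$ are ordinary $n$-subsets, condition~2 becomes the weighted Hall condition for the linear matroid of the vectors $b_j$, Rado's theorem supplies the fractional matching, and the argument above is complete. For $n_j>1$ one cannot in general restrict to selections that take whole blocks (the integers $n_j$ need not tile $n$), so partial block-selections are unavoidable and the AM--GM product no longer factors cleanly as $\prod_j\det(A_j)^{p_j}$. Overcoming this is the crux of the theorem: the standard resolutions are either an induction that splits off a \emph{critical} subspace $V$ (one for which condition~2 holds with equality), reducing the datum to the product of its restriction to $V$ and its quotient by $V$---both of which again satisfy conditions~1 and~2, so one descends to the \emph{simple} case where condition~2 is strict and a coercivity/compactness argument bounds the infimum below directly---or, alternatively, the heat-flow monotonicity method of \cite{bennett2008brascamp}, which proves the determinantal inequality without any combinatorial case analysis. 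I would present the simple-versus-critical induction, since it keeps the role of the subspace condition transparent, and would reduce the analytic base case to a coercivity estimate of the kind foreshadowed by the necessity computation above.
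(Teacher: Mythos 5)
The paper does not actually prove this statement: it is imported verbatim from Bennett, Carbery, Christ and Tao \cite{bennett2008brascamp} (their Theorem 1.15) and used as a black box to justify calling a datum ``non-degenerate'', so there is no internal proof to compare yours against. Judged on its own, your necessity direction is complete and correct, and it is the standard argument: the global scaling $A_j\mapsto tA_j$ forces the scaling condition $n=\sum_j p_jn_j$, and the test family $A_j(s)=P_j+s(I-P_j)$ together with the degree bound $\deg_s\det(M_0+sN')\le\rank N'\le n-\dim V$ forces the subspace inequalities. (One small cleanup: the claim that ``exactly $\rank N'$ eigenvalues grow linearly'' is not quite right in general; the polynomial degree bound on $\det(M_0+sN')$, which you also state, is the correct and sufficient formulation.)

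The sufficiency direction, however, has a genuine gap, and you have named it yourself: the Cauchy--Binet/AM--GM program is only carried to completion when all $n_j=1$, where the marginal constraints $\sum_S w_S k_j(S)=p_jn_j$ define a fractional matching problem solvable by Rado's theorem for the linear matroid of the rows. For general $n_j$ you correctly observe that partial block-selections break the factorization of the AM--GM product into $\prod_j\det(A_j)^{p_j}$, and at that point the argument stops being a proof: the critical-subspace induction (splitting the datum along a subspace where condition 2 holds with equality into restriction and quotient data, then handling the simple case by a coercivity/compactness estimate) is proposed but not executed, and the coercivity base case is itself nontrivial analysis, not a routine consequence of your necessity computation. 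Since that induction, or alternatively the heat-flow monotonicity argument, \emph{is} the content of the cited theorem, what you have is a correct proof of necessity plus an accurate roadmap of the known proofs of sufficiency, rather than a proof of the equivalence. To close the gap you would need, at minimum: (i) the precise statement and proof that restricted and quotient data inherit conditions 1 and 2; (ii) the multiplicativity of the Gaussian ratio under such a splitting; and (iii) the compactness argument bounding the infimum in the simple case.
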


\subsection{Problem statement}
Lieb's characterization leads to the following  characterization of the Brascamp-Lieb constant.

\vspace{3mm}
\fbox{
\begin{minipage}{0.8\textwidth}
{\bf Brascamp-Lieb Constant}

\vspace{1em}
{\it Input:} An $m$-tuple of matrices $B:=(B_j)_{j=1}^m$ where $B_j\in \R^{n_j\times n}$ and an $m$-dimensional vector $p$ that satisfies $n=\sum_{j=1}^m n_j p_j$.
\vspace{1em}

{\it Goal:} Compute 
\begin{align}
\mathrm{BL}(B, p) :=& \sup_{X=(X_1,\hdots,X_m)} \left(\frac{\prod_{j=1}^m\det(X_j)^{p_j}}{\det(\sum_{j=1}^m p_j B_j^\top X_j B_j)}\right)^{1/2}\label{eq:BL-optimization}\\
&\text{s.t.}\;\forall j=1,\hdots,m,\;\;X_j\in\Sn^{n_j}_{++}\nonumber
\end{align}
\end{minipage}
}

\subsection{Non-convexity of Lieb's formulation}
Let us first observe that, when $n_j=1$ for all $j$, the formulation  in ~\eqref{eq:BL-optimization} is log-concave by a change of variables similar to the example of the log of a polynomial over the positive orthant.
We can verify this observation as follows.
Let $X_1,\hdots,X_m$ be inputs with $X_j\in\Sn^{n_j}_{++}$ and let us denote $\log(p_j X_j)$ by $y_j$ for $j=1,\hdots, m$.
Then, $2\log BL(B,p; X_1,\hdots, X_m)$ becomes
\begin{equ}
f(y):=\inner{p, y} - \log(\det(\sum_{j=1}^m e^{y_j} B_j^\top B_j)) - \sum_{j=1}^m p_j\log p_j.
\end{equ}
We note that $\det(\sum_{j=1}^m e^{y_j} B_j^\top B_j)$ is a multilinear function of $(e^{y_1}, \hdots, e^{y_m})$ with non-negative coefficients.
The Cauchy-Binet formula yields
\begin{equ}
\det\left(\sum_{j=1}^m e^{y_j} B_j^\top B_j\right) = \sum_{\alpha\in \mathcal{F}} c_\alpha e^{\inner{\alpha, y}}
\end{equ}
where $\mathcal{F} := \Set{I\subseteq\{1,\hdots,m\}}{\abs{I}=n}$ and $c_\alpha=\det(B_\alpha)^2$ where $B_\alpha$ is a matrix whose rows are $B_j$ for $j\in\alpha$.
Thus, 
\begin{equ}
f(y) = \inner{p, y}-\sum_{j=1}^m p_j\log(p_j) + \log\left(\sum_{\alpha\in\mathcal{F}} c_\alpha \exp(\inner{\alpha,y})\right).
\end{equ}
A simple computation yields
\begin{align*}
\nabla^2 f(y) =& \frac{\sum_{\alpha\in\mathcal{F}} c_\alpha e^{\inner{\alpha, y}} \alpha\alpha^\top}{\sum_{\alpha\in\mathcal{F}} c_\alpha e^{\inner{\alpha, y}}} - \frac{(\sum_{\alpha\in\mathcal{F}} c_\alpha e^{\inner{\alpha, y}}\alpha)(\sum_{\alpha\in\mathcal{F}} c_\alpha e^{\inner{\alpha, y}}\alpha^\top)}{(\sum_{\alpha\in\mathcal{F}} c_\alpha e^{\inner{\alpha, y}})^2}\\
=&\frac{\sum_{\alpha,\beta\in\mathcal{F}} (\alpha-\beta)(\alpha-\beta)^\top c_\alpha c_\beta e^{\inner{\alpha+\beta, y}}}{(\sum_{\alpha\in\mathcal{F}} c_\alpha e^{\inner{\alpha,y}})}
\end{align*}
Positive semi-definiteness of $\nabla^2 f(y)$ follows from the positiveness of $\exp$ function and coefficients $c_\alpha$.

Now, let us return to general case.
In general,~\eqref{eq:BL-optimization} is not concave.
We can  get some intuition for this by noting that if we fix all but one of the $X_j$s, then $\log BL(B,p; X_1,\hdots, X_m)$  reduces to
\begin{equ}
\frac{1}{2}\left(p_j\log\det(X_j)-\log\det(p_jB_j^\top X_j B_j + C_j)\right)
\end{equ}
where $C_j$ is a positive semi definite matrix.
This function is a difference of two concave functions of $X_j$ which indicates it may not be a concave function.
In fact, for some feasible Brascamp-Lieb data one can find some input and directions such that this formulation is not concave.
We remark that although~\eqref{eq:BL-optimization} is not a log-concave function in general, it is jointly geodesically log-concave; see~\cite{vishnoi2018geodesically}.

\subsection{A geodesically convex formulation}
We present a geodesically convex formulation to compute $\mathrm{BL}(B, p)$.
Given an $m$-tuple of matrices $B:=(B_j)_{j=1}^m$ where $B_j\in \R^{n_j\times n}$ and an $m$-dimensional vector $p$ that satisfies $n=\sum_{j=1}^m n_j p_j$, define $F_{B,p}$ as follows,
\begin{equ}
F_{B,p}(X) := \sum_{j=1}^m p_j \log\det(B_j X B_j^\top) - \log\det(X).
\end{equ}
It was proved in \cite{vishnoi2018geodesically} that this is  a reformulation of the Brascamp-Lieb constant. 
\begin{theorem}[\bf Reformulation of Brascamp-Lieb constant]
If $(B,p)$ is feasible, then $BL(B, p) = \exp(-\nicefrac{1}{2}\inf_{X\in \Sn^n_{++}} F_{B,p}(X))$.
\end{theorem}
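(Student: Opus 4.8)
The plan is to take logarithms and reduce the theorem to a single convex-duality identity. Squaring and taking logs in Lieb's formulation~\eqref{eq:BL-optimization}, we have
\[
2\log\mathrm{BL}(B,p)=\sup_{X_j\in\Sn^{n_j}_{++}}\Bigl[\sum_{j=1}^m p_j\log\det X_j-\log\det\Bigl(\sum_{j=1}^m p_j B_j^\top X_j B_j\Bigr)\Bigr],
\]
whereas the claimed right-hand side is precisely $-\inf_{X\in\Sn^n_{++}}F_{B,p}(X)=\sup_{X\in\Sn^n_{++}}\bigl[\log\det X-\sum_{j=1}^m p_j\log\det(B_jXB_j^\top)\bigr]$. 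So it suffices to show these two suprema coincide. The bridge is the Fenchel dual of $-\log\det$: for any $A\in\Sn^k_{++}$,
\[
\sup_{Z\in\Sn^k_{++}}\bigl[\log\det Z-\tr(AZ)\bigr]=-\log\det A-k,
\]
with unique maximizer $Z=A^{-1}$ (read off from the stationarity condition $Z^{-1}=A$).

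First I would apply this identity with $A=\sum_{j=1}^m p_jB_j^\top X_jB_j\in\Sn^n_{++}$ to linearize the coupling term, writing $-\log\det A=\sup_{Z\in\Sn^n_{++}}[\log\det Z-\tr(AZ)]+n$. Substituting this into the first display turns the Lieb objective into a supremum over the tuple $(X_j)$ of an expression that itself contains a supremum over the dual variable $Z\in\Sn^n_{++}$. Since both are suprema, they commute freely (a Fubini-type exchange for suprema), so \emph{no minimax theorem is needed}; pulling the $Z$-supremum outside yields
\[
2\log\mathrm{BL}(B,p)=\sup_{Z\in\Sn^n_{++}}\Bigl[n+\log\det Z+\sum_{j=1}^m\sup_{X_j\in\Sn^{n_j}_{++}}\bigl(p_j\log\det X_j-p_j\tr(B_jZB_j^\top X_j)\bigr)\Bigr],
\]
using $\tr(Z\,B_j^\top X_jB_j)=\tr\bigl((B_jZB_j^\top)X_j\bigr)$.

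Next I would evaluate each inner supremum over $X_j$ in closed form by the same duality identity (with $A=B_jZB_j^\top\in\Sn^{n_j}_{++}$), obtaining $-p_j\log\det(B_jZB_j^\top)-p_jn_j$, attained at $X_j=(B_jZB_j^\top)^{-1}$. Collecting terms gives
\[
2\log\mathrm{BL}(B,p)=\sup_{Z\in\Sn^n_{++}}\Bigl[\log\det Z-\sum_{j=1}^m p_j\log\det(B_jZB_j^\top)\Bigr]+\Bigl(n-\sum_{j=1}^m p_jn_j\Bigr).
\]
Here the feasibility scaling condition $n=\sum_{j=1}^m p_jn_j$ (condition~1 of~\cref{thm:feasibility}, which is also part of the input specification) makes the constant vanish, and the remaining supremum is exactly $-\inf_{Z}F_{B,p}(Z)$. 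This proves $\mathrm{BL}(B,p)=\exp\bigl(-\tfrac12\inf_{X}F_{B,p}(X)\bigr)$.

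The step I expect to be the main obstacle is not the algebra but tracking the positive-definiteness hypotheses that legitimize each application of the log-det duality. Applying the identity to $A=\sum_j p_jB_j^\top X_jB_j$ requires $A\succ0$, i.e.\ $\bigcap_j\ker B_j=\{0\}$; this follows from feasibility by taking $V=\bigcap_j\ker B_j$ in condition~2 of~\cref{thm:feasibility}, which forces $\dim V\le 0$. Evaluating the inner supremum requires $B_jZB_j^\top\succ0$ for $Z\succ0$, i.e.\ each $B_j$ surjective (full row rank $n_j$), which is the standard nondegeneracy assumption on a Brascamp-Lieb datum and is implicit in $F_{B,p}$ being well defined. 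Finally, feasibility guarantees via~\cref{thm:feasibility} that the common optimal value is finite; the duality chain itself is valid in the extended reals, so once the definiteness conditions are in place no separate attainment or finiteness argument is required.
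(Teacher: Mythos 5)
Your proof is correct in its core mechanism, and it is worth noting up front that the exposition itself does not prove this theorem at all: it cites \cite{vishnoi2018geodesically} and only supplies a proof for the subsequent geodesic-convexity statement. So you are providing a self-contained argument where the paper has none. Your route --- linearizing the coupled $\log\det$ term via the Legendre identity $\sup_{Z\in\Sn^k_{++}}\left[\log\det Z-\tr(AZ)\right]=-\log\det A-k$, exchanging the two suprema (which is indeed free, since $\sup\sup=\sup\sup$ requires no minimax theorem), and then evaluating the now-decoupled inner suprema over each $X_j$ by the same identity --- is a clean reduction, and the bookkeeping is right: the leftover constant $n-\sum_j p_jn_j$ vanishes exactly by condition~1 of \cref{thm:feasibility}, and running the chain in the extended reals legitimately dispenses with any separate attainment or finiteness argument, since feasibility makes the left-hand side finite and every step is an exact identity.

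Two of your definiteness checks need small repairs, both fillable in a line or two. First, to show $\sum_j p_jB_j^\top X_jB_j\succ 0$ you take $V=\bigcap_j\ker B_j$ in condition~2 of \cref{thm:feasibility}; but that only yields, for each nonzero $v$, \emph{some} $j$ with $B_jv\neq 0$, which is useless if that particular $j$ has $p_j=0$. Take instead $V=\bigcap_{j:\,p_j>0}\ker B_j$: then $\sum_j p_j\dim(B_jV)=0$ forces $\dim V=0$, so every nonzero $v$ admits a $j$ with $p_j>0$ \emph{and} $B_jv\neq 0$, which is what the positivity of $v^\top\bigl(\sum_j p_jB_j^\top X_jB_j\bigr)v$ actually requires. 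Second, you assert surjectivity of the $B_j$ as ``standard nondegeneracy,'' but the theorem hypothesizes only feasibility, so this must be derived rather than assumed. It follows quickly: taking $V=\R^n$ in condition~2 gives $n\leq\sum_j p_j\dim(B_j\R^n)\leq\sum_j p_jn_j=n$, forcing equality termwise, hence $\dim(B_j\R^n)=n_j$ for every $j$ with $p_j>0$, i.e.\ $B_jZB_j^\top\succ 0$ whenever $Z\succ 0$. (This is the same rank computation the paper performs, in contrapositive form, inside its proof that $F_{B,p}$ is geodesically convex.) With these two substitutions your argument is complete.
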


\noindent
Subsequently, it was proved that this formulation is geodesically convex.
\begin{theorem}[\bf Brascamp-Lieb constant has a succinct geodesically convex formulation\cite{vishnoi2018geodesically}]
Let $(B,p)$ be a Bracamp-Lieb datum such that $n=\sum_{j=1}^{m} p_j n_j$ and $n=\sum_{j=1}^m p_j \dim(B_j \R^n)$.
Then $F_{B,p}(X)$ is a geodesically convex function on $\Sn^n_{++}$ with respect to local metric $g_X(U,V):=\tr[X^{-1} U X^{-1} V]$.
\end{theorem}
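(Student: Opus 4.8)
The plan is to decompose $F_{B,p}$ into pieces whose geodesic convexity has already been established earlier in the exposition, and then to combine them using the fact that geodesic convexity with respect to a \emph{fixed} metric is preserved under non-negative linear combinations. Write
\[
F_{B,p}(X) = \sum_{j=1}^m p_j \log\det(B_j X B_j^\top) \;-\; \log\det(X),
\]
so it suffices to treat each summand separately and add up. Restricting attention to the indices $j$ with $p_j>0$ (the others contribute nothing), I would view each map $T_j(X) := B_j X B_j^\top$ as a linear operator from $\Sn^n$ to $\Sn^{n_j}$ and reduce the first sum to \cref{prop:log-det-2}, while the second term is handled by \cref{prop:log-det}.

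The key structural point is that $T_j$ is a \emph{strictly positive} linear operator exactly when $B_j$ has full row rank. Indeed, for $X\succ 0$ and any $v\neq 0$ we have $v^\top(B_j X B_j^\top)v = (B_j^\top v)^\top X (B_j^\top v) > 0$ provided $B_j^\top v \neq 0$, which holds for all $v\neq 0$ precisely when $\ker B_j^\top = \{0\}$, i.e. $\rank(B_j)=n_j$. This is where the two hypotheses enter: subtracting $n=\sum_j p_j n_j$ from $n=\sum_j p_j \dim(B_j\R^n)$ gives $\sum_j p_j\,(n_j-\rank(B_j)) = 0$, and since $p_j\ge 0$ and each $n_j-\rank(B_j)\ge 0$, the condition $p_j>0$ forces $\rank(B_j)=n_j$. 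Thus every index contributing to $F_{B,p}$ has $T_j$ strictly positive; as a by-product $\log\det(B_j X B_j^\top)$ is finite on $\Sn^n_{++}$, so $F_{B,p}$ is well defined.

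Given this, each term $\log\det(T_j(X)) = \log\det(B_j X B_j^\top)$ is geodesically convex by \cref{prop:log-det-2} applied to $T_j$. Since $p_j\ge 0$, the weighted sum $\sum_j p_j \log\det(B_j X B_j^\top)$ is again geodesically convex: along any geodesic $\gamma$ each term satisfies the defining inequality of \cref{def:g-convex-function}, and taking a non-negative combination of these scalar inequalities yields the same inequality for the sum. The remaining term $-\log\det(X)$ is geodesically \emph{linear} by \cref{prop:log-det} (it is simultaneously geodesically convex and concave), so adding it preserves geodesic convexity. Combining, $F_{B,p}$ is geodesically convex on $\Sn^n_{++}$ with respect to $g_X(U,V)=\tr[X^{-1}UX^{-1}V]$.

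The assembly above is routine; the genuine analytic content — that $\log\det$ of a strictly positive operator is geodesically convex — is already isolated in \cref{prop:log-det-2}, whose proof reduces to Kadison's operator inequality for $T_j$ combined with a Schur-complement argument. Consequently, I expect the main obstacle not to be the combination step but the verification that $T_j(X)=B_j X B_j^\top$ legitimately satisfies the hypotheses of \cref{prop:log-det-2}: concretely, establishing strict positivity and confirming that the theorem's rank/dimension conditions guarantee it for every relevant $j$.
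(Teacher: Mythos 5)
Your proof is correct and takes essentially the same route as the paper: the same decomposition of $F_{B,p}$ into $\sum_j p_j\log\det(B_jXB_j^\top)$ plus the geodesically linear term $-\log\det(X)$, the same reduction to \cref{prop:log-det-2} and \cref{prop:log-det}, and the same use of the two dimension hypotheses to force $\rank(B_j)=n_j$. The only difference is bookkeeping, and a welcome one: by discarding $p_j=0$ indices up front and arguing directly from $\sum_j p_j(n_j-\rank(B_j))=0$, you avoid the paper's proof-by-contradiction, whose strict inequality $n=\sum_j p_j\dim(B_j\R^n)<\sum_j p_j n_j=n$ tacitly requires the offending index to have $p_{j_0}>0$.
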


\noindent
The conditions we impose on the Brascamp-Lieb datum are satisfied by any feasible datum, see~\cref{thm:feasibility}.
A Brascamp-Lieb satisfying these conditions is called a non-degenerate datum.

\begin{proof}
Let us first observe that $F_{B,p}$ is a positive weighted sum of 
$$-\log\det(X) \mbox{  and  } \log\det(B_j X B_j^\top)$$ for $j=1,\hdots,m$.
$-\log\det(X)$ is geodesically convex by~\cref{prop:log-det}.
Also, we know that if $T(X)$ is strictly positive linear operator, then $\log\det(T(X))$ is geodesically convex by~\cref{prop:log-det-2}.
Thus, we only need to show that $T_j(X):=B_j X B_j^\top$ is a strictly positive linear maps if $(B,p)$ is feasible.
This would imply that, for any geodesic $\gamma$ and for $t\in[0, 1]$,
\begin{align*}
	F_{B,p}(\gamma)(t)
		=& \sum_{j=1}^m p_j\log\det(B_j \gamma(t) B_j^\top) - \log\det(\gamma(t)) \\
		\leq& \sum_{j=1}^m p_j\left((1-t)\log\det(B_j \gamma(0) B_j^\top) + y \log\det(B_j \gamma(1) B_j^\top)\right)\\
		&\qquad\qquad - \left((1-t)\log\det(\gamma(0)) + t\log\det(\gamma(1))\right)\\
		=& (1-t)\left(\sum_{j=1}^m p_j \log\det(B_j \gamma(0) B_j^\top) - \log\det(\gamma(0))\right)\\
		&\qquad\qquad + t\left(\sum_{j=1}^m p_j\log\det(B_j \gamma(1) B_j^\top)-\log\det(\gamma(1))\right)\\
		=& (1-t)F_{B,p}(\gamma(0)) + t F_{B,p}(\gamma(1)).  
\end{align*}

\noindent
Let us assume that for some $j_0\in\{1,\hdots,m\}$, $T_{j_0}(X)$ is not strictly positive linear map.
Then, there exists $X_0\in\Sn^{n}_{++}$ such that $T_{j_0}(X_0)$ is not positive definite.
Thus, there exists $v\in\R^{n_{j_0}}$ such that 
$$v^\top T_{j_0}(X_0) v \leq 0.$$
Equivalently, 
$$(B_{j_0}^\top v)^\top X_0 (B_{j_0}^\top v)\leq 0.$$
Since $X_0$ is positive definite, we get $B_{j_0} v = 0$.
Hence, 
$$v^\top B_{j_0} B_{j_0}^\top v = 0.$$
Consequently, rank of $B_{j_0}$ is at most $n_{j_0}-1$ and 
$$\dim(B_{j_0} \R^n)<n_{j_0}.$$
We note that $\dim(B_j \R^n)\leq n_j$ as $B_j\in\R^{n_j\times n}$.
Therefore, 
\begin{equ}
n = \sum_{j=1}^m p_j \dim(B_j \R^n) < \sum_{j=1}^m p_j n_j = n.
\end{equ}  
This contradicts with our assumptions on $(B,p)$.
Consequently, for any $j\in\{1,\hdots,m\}$, $T_j(X):=B_j^\top X B_j$ is strictly positive linear whenever $(B,p)$ is a non-degenerate datum.
Therefore, $F_{B,p}(X)= \sum_{j\in\{1,\hdots,m\}} p_j\log\det(B_j X B_j^\top) - \log\det(X)$ is geodesically convex function.
\end{proof}

\section{Application: Operator Scaling}
\label{sec:OS}
The matrix scaling problem is a classic problem related to bipartite matchings in graphs and asks the following question: given a matrix $M\in \Z_{>0}^{n\times n}$ with positive entries, find positive vectors $x,y\in \R^{n}_{>0}$ such that
$$Y^{-1}MX ~~~~~\mbox{is a doubly-stochastic matrix},$$
where $X=\diag({x})$ and $Y=\diag({y})$.
The operator scaling problem is a generalization of this to the world of operators  as follows:

\vspace{3mm}
\fbox{
\begin{minipage}{0.8\textwidth}
{\bf Operator Scaling Problem for Square Operators}

\vspace{1em}
{\it Input:} An $m$-tuple of $n\times n$ matrices $(A_j)_{j=1}^m$
\vspace{1em}

{\it Goal:} Find two $n\times n$ matrices $X$ and $Y$ such that if $\hat{A}_i := Y^{-1} A_i X$, then
\begin{align*}
\sum_i \hat{A}_i\hat{A}_i^\top = I,\\
\sum_i \hat{A}_i^\top\hat{A}_i = I.
\end{align*}
\end{minipage}
}

\vspace{3mm}
\noindent
The Operator Scaling problem was  studied by Gurvits~\cite{gurvits2004classical} and, more recently by Garg {\em et al.}~\cite{GargGOW16}.
They showed that the Operator Scaling problem can be formulated in terms of solving the following ``operator capacity'' problem.

\vspace{3mm}
\fbox{
\begin{minipage}{0.8\textwidth}
{\bf Operator Capacity for Square Operators}

\vspace{1em}
{\it Input:} A strictly positive operator of the form $T(X):=\sum_{j=1}^m A_j X A_j^\top$ given as an  $m$-tuple of $n\times n$ matrices $(A_j)_{j=1}^m$
\vspace{1em}

{\it Goal:} Compute
\begin{equ}
\mathrm{cap}(T) := \inf_{X\in \Sn^n_{++}} \frac{\det(T(X))}{\det(X)}.
\end{equ}
\end{minipage}
}
\vspace{3mm}

\noindent
If the infimum of this problem is attainable and  is attained for $X^\star$, then it can used to compute the optimal solution to the operator scaling problem as follows:
\begin{align*}
\sum_i \hat{A}_i\hat{A}_i^\top = I,\\
\sum_i \hat{A}_i^\top\hat{A}_i = I,
\end{align*}
where $\hat{A}_i = T(X^\star)^{-1/2} A_i (X^\star)^{1/2}$.

The operator capacity turns out to be a non-convex function \cite{gurvits2004classical, GargGOW16}.
Its geodesic convexity  follows from the example  of geodesically convex functions we discussed on the positive definite cone.
In particular,~\cref{prop:log-det} asserts that $\log\det(X)$ is geodesically linear and~\cref{prop:log-det-2} asserts that $\log\det(T(X))$ is geodesically convex on the positive definite cone with respect to metric $g_X(U,V):=\tr[X^{-1}UX^{-1}V]$.
Thus, their difference $\log\mathrm{cap}(X)$ is also geodesically convex.

\begin{theorem}[AGLOW18]
Given a strictly positive linear map $T(X)=\sum_{j=1}^m A_j X A_j^\top$, $\log\mathrm{cap}(X)$ is a geodesically convex function on $\Sn^n_{++}$ with respect to local metric $g_X(U,V):=\tr[X^{-1}UX^{-1}V]$.
\end{theorem}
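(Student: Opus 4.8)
The plan is to recognize that the function in question decomposes additively into two pieces, each of which has already been analyzed in the preceding sections. Unwinding the definition of the operator capacity, the quantity to be shown geodesically convex is the objective $X\mapsto \log\det(T(X)) - \log\det(X)$ (that is, the logarithm of $\det(T(X))/\det(X)$), viewed as a function on $\Sn^n_{++}$ before the infimum over $X$ is taken.

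First I would invoke \cref{prop:log-det-2}: since $T(X)=\sum_{j=1}^m A_j X A_j^\top$ is by hypothesis a strictly positive linear map, that proposition gives immediately that $\log\det(T(X))$ is geodesically convex on $\Sn^n_{++}$ with respect to the metric $g_X(U,V):=\tr[X^{-1}UX^{-1}V]$. Next I would handle the second term via \cref{prop:log-det}, which asserts that $\log\det(X)$ is geodesically \emph{linear} — simultaneously geodesically convex and geodesically concave — with respect to the same metric. In particular $-\log\det(X)$ is again geodesically linear, and hence geodesically convex.

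Finally I would combine the two terms. A sum of a geodesically convex function with a geodesically linear one is geodesically convex: restricting to any geodesic $\gamma_{pq}:[0,1]\to\Sn^n_{++}$, the function $t\mapsto \log\det(T(\gamma_{pq}(t)))-\log\det(\gamma_{pq}(t))$ is the sum of a convex function of $t$ and an affine function of $t$, hence convex in $t$; by \cref{def:g-convex-function} this is precisely the statement that $\log\mathrm{cap}$ is geodesically convex. Equivalently, one may invoke the second-order characterization in \cref{thm:g-convex-second-order} and simply add the two non-negative second derivatives along an arbitrary geodesic.

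The argument is essentially a two-line reduction, so I do not expect a serious obstacle. The only point meriting care is confirming that the hypotheses of \cref{prop:log-det-2} are met — namely that $T$ genuinely maps $\Sn^n_{++}$ into the positive definite cone $\Sn^m_{++}$ — which is guaranteed by the standing assumption that $T$ is strictly positive; for operators of the form $\sum_j A_j X A_j^\top$ this strict positivity amounts to the matrices $A_j$ having no common kernel, echoing the non-degeneracy condition used in the Brascamp-Lieb application of \cref{sec:BL}.
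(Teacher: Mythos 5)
Your proposal is correct and matches the paper's own argument exactly: the paper also writes $\log\bigl(\det(T(X))/\det(X)\bigr)=\log\det(T(X))-\log\det(X)$, applies \cref{prop:log-det-2} to the first term and the geodesic linearity from \cref{prop:log-det} to the second, and concludes that the sum is geodesically convex. Your added checks (summing second derivatives along a geodesic via \cref{thm:g-convex-second-order}, and verifying strict positivity of $T$ for the hypotheses of \cref{prop:log-det-2}) are consistent elaborations of the same route, not a different one.
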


\newpage
\bibliographystyle{alpha}
\bibliography{refs}

\appendix
\section{Proof of the Fundamental Theorem of Riemannian Manifold}
\label{sec:levi-civita-proof}
\subsection{Uniqueness of the Levi-Civita connection}

Let $\nabla$ be a Levi-Civita connection, and $X,\,Y,\,Z$ be three vector fields in $\mathfrak{X}(M)$.  We will show that, if the assumptions of the Levi-Civita connection hold, then there is a formula which uniquely determines $\nabla$.
First, we compute the vector fields $X(g(Y,\,Z))$, $Y(g(Z,\,Y))$, and $Z(g(X,\,Y))$.  By the compatibility condition (Equation \eqref{eq:property2}), we have
\begin{align*}
X(g(Y,\,Z)) =& g(\nabla_X Y,\,Z) + g(Y,\,\nabla_X Z)\\
Y(g(Z,\,X)) =& g(\nabla_Y Z,\,X) + g(Z,\,\nabla_Y X)\\
Z(g(X,\,Y)) =& g(\nabla_Z X,\,Y) + g(X,\,\nabla_Z Y).
\end{align*}
Therefore, 
\begin{align*}
	X(g(Y,\,Z))&+Y(g(Z,\,X))-Z(g(X,\,Y))\\
	= &g(\nabla_X Y,\,Z) + g(Y,\,\nabla_X Z) + g(\nabla_Y Z,\,X) + g(Z,\,\nabla_Y X) -  g(\nabla_Z X,\,Y) - g(X,\,\nabla_Z Y) \\
	= &g(\nabla_X Y+\nabla_Y X,\,Z)+ g(\nabla_X Z - \nabla_Z X,\,Y) + g(\nabla_Y Z-\nabla_Z Y,\,X) \\
	= & g([X,\,Z],\,Y) + g([Y,\,Z],\,X) - g([X,\,Y],\,Z) + 2g(\nabla_X Y,\,Z),
\end{align*}
where the last equality is by the torsion-free property of the Levi-Civita connection.
Thus,
\begin{align}\label{eq:Koszul}
g(\nabla_X Y,\,Z) = \frac{1}{2}\bigg(
 X(g(Y,\,Z))&+Y(g(Z,\,X))-Z(g(X,\,Y))\\
 &- g([X,\,Z],\,Y) - g([Y,\,Z],\,X) + g([X,\,Y],\,Z)   \bigg)\nonumber
\end{align}
for all $X,\,Y,\,Z\in\mathfrak{X}(M)$.
Since $g$ is non-degenerate, $\nabla_X Y$ is uniquely determined by Equation \eqref{eq:Koszul}, implying that the Levi-Civita connection is unique.
The Christoffel symbols of this unique connection are computed explicitly in the next section.

\subsection{Formula for Christoffel symbols in terms of the metric}
Equation \eqref{eq:Koszul}  is called the Koszul formula.  Using the Koszul formula we can compute Christoffel symbols for the Levi-Civita connection.
Recall that $\Gamma_{ij}^k$ denotes the coefficient of $(\nabla_{\partial_i} \partial_j)$ in the direction of $\partial_k$.
In other words,
\begin{equ}
\nabla_{\partial_i} \partial_j = \sum_{k=1}^d \Gamma_{ij}^k \partial_k,
\end{equ}
where the convention is that we sum over all indices that only appear on one side of the equation (in this case, we sum over the index $k$).
Define the matrix $G:= [g_{ij}]_{n\times n}$ where each entry is $g_{ij}:=g(\partial_i,\,\partial_j)$.  Denote each entry $(G)^{-1}_{ij}$ of its inverse by $g^{ij}$.
We have
\begin{align*}
g(\nabla_{\partial_i}\partial_j,\,\partial_l) 
	=& \sum_{k=1}^d g(\Gamma_{ij}^k \partial_k,\,\partial_l)\\
	=& \sum_{k=1}^d  \Gamma_{ij}^k g_{kl}\\
	=& \sum_{k=1}^d  g_{lk} \Gamma_{ij}^k.
\end{align*}
In matrix form,
\begin{equ}
G \begin{bmatrix}
\Gamma_{ij}^1\\
\vdots\\
\Gamma_{ij}^n
\end{bmatrix}
=
\begin{bmatrix}
g(\nabla_{\partial_i}\partial_j,\,\partial_1)\\
\vdots\\
g(\nabla_{\partial_i}\partial_j,\,\partial_n)
\end{bmatrix}.
\end{equ}
Thus, $\Gamma_{ij}^k$ is given by
\begin{equ}\label{eq:0}
\Gamma_{ij}^k= \sum_{l=1}^d g^{kl} g(\nabla_{\partial_i}\partial_j,\,\partial_l).
\end{equ}
We can compute $g(\nabla_{\partial_i}\partial_j,\,\partial_l)$ by setting $X$ to be $\partial_i$, $Y$ to be $\partial_j$, and $Z$ to be $\partial_l$ in Equation \eqref{eq:Koszul}.
Before computing $g(\nabla_{\partial_i}\partial_j,\,\partial_l)$, 
we need a consequence of torsion-freeness $\Gamma_{ij}^k = \Gamma_{ji}^k$ for any $i,j,k$.
Let us recall that torsion-freeness requires for any vector fields $X$ and $Y$,
\begin{equ}
\nabla_X Y - \nabla_Y X = [X, Y].
\end{equ}
In particular, if $X=\partial_i$ and $Y=\partial_j$, then
\begin{align*}
\nabla_{\partial_i}\partial_j - \nabla_{\partial_j} \partial_i 
	=& \sum_{k=1}^d \Gamma_{ij}^k \partial_k - \sum_{k=1}^d \Gamma_{ji}^k\partial_k = \sum_{k=1}^d(\Gamma_{ij}^k-\Gamma_{ji}^k)\partial_k\\
\;[\partial_i,\partial_j]
	=& \partial_i\partial_j-\partial_j\partial_i = 0
\end{align*}
$\partial_k$s are linearly independent.
Consequently, their coefficients $\Gamma_{ij}^k-\Gamma_{ji}^k$ should be 0.
Therefore,
\begin{equ}
g(\nabla_{\partial_i}\partial_j,\,\partial_l)
	= \sum_{k=1}^d  g_{lk} \Gamma_{ij}^k
	= \sum_{k=1}^d  g_{lk} \Gamma_{ji}^k
	= \sum_{k=1}^d  g(\nabla_{\partial_j}\partial_i,\,\partial_l).
\end{equ}
Therefore, Equation \eqref{eq:Koszul} implies that
\begin{align}\label{eq:1}
g(\nabla_{\partial_i}\partial_j,\,\partial_l) =& \frac{1}{2}\left(
 \partial_i(g(\partial_j,\,\partial_l))
 +\partial_j(g(\partial_l,\,\partial_i))
 -\partial_l(g(\partial_i,\,\partial_j))\right.\\
 &\qquad\qquad
\left. -g([\partial_i,\,\partial_l],\,\partial_j)
 - g([\partial_j,\,\partial_l],\,\partial_i) 
 + g([\partial_i,\,\partial_j],\,\partial_l\right),\nonumber
\end{align}
and
\begin{align}
g(\nabla_{\partial_i}\partial_j,\,\partial_l) &=  g(\nabla_{\partial_j}\partial_i,\,\partial_l)\nonumber\\
&=  \frac{1}{2}\left(
 \partial_j(g(\partial_i,\,\partial_l))
 +\partial_i(g(\partial_l,\,\partial_j))
 -\partial_l(g(\partial_j,\,\partial_i))\right.\label{eq:1'}\\
 &\qquad\qquad
\left. -g([\partial_j,\,\partial_l],\,\partial_i) 
 - g([\partial_i,\,\partial_l],\,\partial_j) 
 + g([\partial_j,\,\partial_ i],\,\partial_l) 
\right).\nonumber
\end{align}
Since $g$ is symmetric, we have
\begin{align*}
 \partial_j(g(\partial_l,\,\partial_i))=& \partial_j(g(\partial_i,\,\partial_l)),\\
 \partial_i(g(\partial_j,\,\partial_l))=& \partial_i(g(\partial_l,\,\partial_j)),\\
 \partial_l(g(\partial_i,\,\partial_j))=& \partial_l(g(\partial_j,\,\partial_i)).
 \end{align*}
Thus, combining Equations \eqref{eq:1} and \eqref{eq:1'}, we have
\begin{equ}
g([\partial_i,\,\partial_j],\,\partial_l) = g([\partial_j,\,\partial_i],\,\partial_l).
\end{equ}
Since $\nabla$ is torsion-free, we have that 
$$[\partial_i,\,\partial_j] = -[\partial_j,\,\partial_i],$$
 implying that 
 $$g([\partial_i,\,\partial_j],\,\partial_l)= 0.$$
Since our selection of indices $i,\,j,\,l$ was arbitrary,
Equation \eqref{eq:1} simplifies to
\begin{equ}[eq:2]
g(\nabla_{\partial_i}\partial_j,\,\partial_l)  = \frac{1}{2}(\partial_i g_{jl}+\partial_{j} g_{il} - \partial_l g_{ij}).
\end{equ}
Combining \eqref{eq:0} and \eqref{eq:2}, we get
\begin{equ}
\Gamma_{ij}^k = \sum_{l=1}^d \frac{1}{2}g^{kl}(\partial_i g_{jl}+\partial_{j} g_{il} - \partial_l g_{ij}).
\end{equ}

\section{Euler-Lagrange Dynamics on a Riemannian Manifold}
\label{sec:EL}
In this section, we derive the Euler-Lagrange dynamics, a useful concept from the calculus of variations.
The significance of Euler-Lagrange equations is that it allows us to characterize minimizers of following work or action integral:
\begin{equ}
S[\gamma] = \int_a^b L(\gamma, \dot{\gamma}, t) dt
\end{equ}
where $L$ is a smooth function, $\gamma$ is a curve whose end-points fixed, and $\dot{\gamma}=\frac{d \gamma}{dt}$.
An application of this characterization is~\cref{thm:geodesic-EL}.
Before we delve into derivations of Euler-Lagrange equations on a Riemannian manifold, we present a point of view to Euler-Lagrange equations from a Newtonian mechanics perspective.

\subsection{Euler-Lagrange dynamics in Newtonian mechanics} 
In Euler-Lagrangian dynamics, one typically denotes the  generalized position of a particle by $q$ and its generalized velocity by $\dot{q}$, as it is normally a time derivative of the position. In most cases we should still think of $\dot{q}$ as a formal variable independent of $q$, but if we consider a trajectory $(q(t),\dot{q}(t))$ of a particle, then clearly
$$\frac{d}{dt} q(t) = \dot{q}(t).$$
\noindent 
The central object of the Euler-Lagrangian dynamics is the {\it Lagrangian} function defined as
\begin{equ}
L(q_1,\,\hdots,\,q_d,\,\dot{q}_1,\,\hdots,\,\dot{q}_d,\,t) = K(q_1,\,\hdots,\,q_d,\,\dot{q}_1,\,\hdots,\,\dot{q}_d,\,t) -V(q_1,\,\hdots,\,q_d,\,t),
\end{equ}
where $K$ is to be thought of as kinetic energy  and $V$ as  potential energy.
An example of Lagrangian equation in 1-dimension is 
\begin{equ}\label{eq:ex_L}
L = \frac{1}{2} m\dot{q}^2 - V(q).
\end{equ}
The Lagrangian can also depend on time, for instance if $\beta<0$ is a fixed constant
\begin{equ}
L' = e^{\beta t} \left(\frac{1}{2} m\dot{q}^2 - V(q)\right),
\end{equ}
then the above describes a physical system with friction.

We are now ready to state the Euler-Lagrange equations.
\begin{definition}\
The Euler-Lagrange equations describing the motion of a particle $(q,\dot{q})$ with respect to a Lagrangian $L(q,\dot{q},t)$ are given by
\begin{equation}\label{eq:el}
~~~~~~\frac{\partial L}{\partial q_i} = \frac{d}{dt}\frac{\partial L}{\partial \dot{q}_i}~~~~~~~i=1,2, \ldots, d.
\end{equation}
\end{definition}
\noindent
We can see for instance that the Euler-Lagrange equations with respect to~\eqref{eq:ex_L} are compatible with Newtonian dynamics

$$\frac{d}{dt}\frac{\partial L}{\partial \dot{q}}= \frac{d}{dt}(m\dot{q})
		=  \frac{d}{dt} p
		= F(q)
		=  -\nabla V(q)
	=\frac{\partial L}{\partial q},
$$

\noindent 
where $q$ is the position of particle, $m$ is the mass of the particle, $p$ is the momentum of the particle, $F(q)$ is the force applied at position $q$, and $V(q)$ is the potential energy at position $p$.
The quantity $\frac{\partial L}{\partial q_i}$ is usually referred to as conjugate momenta or generalized momenta, while $\frac{\partial L}{\partial \dot{q}_i}$ is called the generalized force.
We note that the quantity
$$S(q):=\int_a^b L(q,\dot{q},t)dt,$$
represents the work performed by the particle.
The {\it Principle of Least Action} asserts that the trajectory taken by the particle to go from state $(q(a),\dot{q}(a))$ to state $(q(b), \dot{q}(b))$ will minimize $S(q)$. 
In other words, if we compute partial derivative of $S$ with respect to the trajectory of the particle, then result should be $0$.
Starting from this observation and using properties of differentiation and integrals one can arrive to equations~\eqref{eq:el}.
Here, we present the proof of Euler-Lagrange in a more general setting of an ``embedded manifold''.

\subsection{Derivation of Euler-Lagrange equations on a Riemannian manifold}
In this section, we derive Euler-Lagrange equations on a manifold $M$ that is embedded in a larger Euclidean space.
In other words, we assume that $M$ and $TM$ are both part of $\R^n$.
This assumption allows us to use usual the calculus rules and is sufficient for all the applications discussed in this exposition.
One can extend the ideas presented in this section to general manifold setting by carefully handling local coordinate changes.

\begin{theorem}[\bf Euler-Lagrange equations]
Let $(M,g)$ be a Riemannian manifold and $p,q\in M$.
Consider the set of all smooth curves $\Gamma$ that join $p$ to $q$ by mapping $[0,1]$ to $M$.
Let us consider a smooth real-valued function $L(\gamma, \dot{\gamma},t)$ for $\gamma\in\Gamma$ and
\begin{equ}[eq:EL-S-def]
S[\gamma] := \int_0^1 L(\gamma, \dot{\gamma}, t)dt.
\end{equ}
If $\gamma'\in \Gamma$ is a minimizer of $S[\gamma]$, then
\begin{equ}
\frac{d}{dt} \frac{\partial L}{\partial \dot{\gamma}}(\gamma') - \frac{\partial L}{\partial \gamma}(\gamma') = 0.
\end{equ}
\label{thm:EL}
\end{theorem}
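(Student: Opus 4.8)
The plan is to run the classical first-variation argument from the calculus of variations, using the standing assumption that $M$ and $TM$ sit inside a common $\R^n$ so that ordinary multivariable calculus applies along the curve. First I would fix the minimizer $\gamma'$ and perturb it within the admissible class. Let $\eta:[0,1]\to\R^n$ be any smooth variation field that is tangent to $M$ along $\gamma'$ and satisfies $\eta(0)=\eta(1)=0$; the endpoint condition is forced since every competitor in $\Gamma$ must still join $p$ to $q$. I would then produce a smooth family $\gamma_s\in\Gamma$ with $\gamma_0=\gamma'$ and $\frac{\partial}{\partial s}\gamma_s\big|_{s=0}=\eta$, and define the scalar function $\Phi(s):=S[\gamma_s]$.

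Because $\gamma'$ minimizes $S$ over $\Gamma$ and each $\gamma_s$ lies in $\Gamma$, the function $\Phi$ has a minimum at $s=0$, so $\Phi'(0)=0$. The next step is to compute the first variation by differentiating under the integral sign (justified by smoothness of $L$):
\[
\Phi'(0) = \int_0^1\left(\inner*{\frac{\partial L}{\partial \gamma}(\gamma'),\, \eta} + \inner*{\frac{\partial L}{\partial \dot\gamma}(\gamma'),\, \dot\eta}\right) dt.
\]
I would then integrate the second summand by parts to move the time derivative off $\dot\eta$. The resulting boundary term $\left.\inner*{\frac{\partial L}{\partial \dot\gamma}(\gamma'),\,\eta}\right|_0^1$ vanishes precisely because $\eta(0)=\eta(1)=0$, leaving
\[
\Phi'(0) = \int_0^1 \inner*{\frac{\partial L}{\partial \gamma}(\gamma') - \frac{d}{dt}\frac{\partial L}{\partial \dot\gamma}(\gamma'),\ \eta}\, dt = 0.
\]

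Finally I would invoke the fundamental lemma of the calculus of variations: since this integral vanishes for \emph{every} admissible $\eta$, the bracketed field must vanish identically along $\gamma'$, which is exactly the claimed identity $\frac{d}{dt}\frac{\partial L}{\partial \dot\gamma}(\gamma') - \frac{\partial L}{\partial \gamma}(\gamma') = 0$.

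I expect the genuine difficulty to lie not in the calculus but in the admissibility and richness of the variations. One must ensure both that enough variation fields $\eta$ are available — all smooth fields tangent to $M$ along $\gamma'$ and vanishing at the endpoints — so that the fundamental lemma yields a pointwise conclusion, and that each such $\eta$ is realized by an honest family $\gamma_s$ of curves that remain on $M$, rather than merely in the ambient $\R^n$. The embedding hypothesis is exactly what lets me sidestep chart-dependent bookkeeping here; in full generality one would phrase the variation intrinsically and differentiate in local coordinates, tracking transition maps carefully.
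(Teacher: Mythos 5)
Your proposal is correct, and up through the first-variation computation it coincides with the paper's proof: both construct a variation family with fixed endpoints, differentiate under the integral sign, swap the mixed partials, and integrate by parts, with the boundary term killed by the endpoint condition $\eta(0)=\eta(1)=0$. The one genuine divergence is the concluding step. You invoke the fundamental lemma of the calculus of variations, which requires exactly the richness of admissible variations you flag: every smooth field tangent to $M$ along $\gamma'$ and vanishing at the endpoints must be realized by an honest family of curves on $M$ (in the embedded setting this can be arranged, e.g., by projecting $\gamma' + s\eta$ back onto $M$ locally). The paper instead finishes with a self-testing trick: it chooses the variation field $\mu$ to be the Euler--Lagrange defect $\frac{\partial L}{\partial \gamma}-\frac{d}{dt}\frac{\partial L}{\partial \dot{\gamma}}$ itself, so the integrand becomes the squared norm of the defect, which must then vanish pointwise. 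The paper's finish is more self-contained (no external lemma to quote), but it silently assumes this particular field is admissible --- strictly speaking the defect need not vanish at the endpoints, a gap standardly repaired by multiplying it by a smooth nonnegative bump function vanishing at $t=0,1$. Your route through the fundamental lemma handles that cleanly, at the cost of quoting (or proving) the lemma, whose proof is itself essentially this bump-function device; the two finishes are thus of comparable depth, and your explicit attention to realizability of variations on $M$ addresses a point the paper passes over.
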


\noindent
In order to prove~\cref{thm:EL}, we need to argue about the change induced in $S$ by changing $\gamma$.
Towards this end, we introduce the \emph{variation} of a curve.

\begin{definition}[\bf Variation of a curve]
Let $M$ be a differentiable manifold and $\gamma:[0,1]\to M$ be a smooth curve.
A variation of $\gamma$ is a smooth function $\nu:(-c,c)\times [0,1]\to M$ for some $c>0$ such that $\nu$ is $\nu(0, t) = \gamma(t)$, $\forall t\in[0,1]$ and $\nu(u,0)=\gamma(0)$, $\nu(u,1)=\gamma(1)$, $\forall u\in(-c,c)$.
\label{def:variation}
\end{definition}

\begin{figure}[!hbt]
\centering
\includegraphics[keepaspectratio, width=0.45\textwidth]{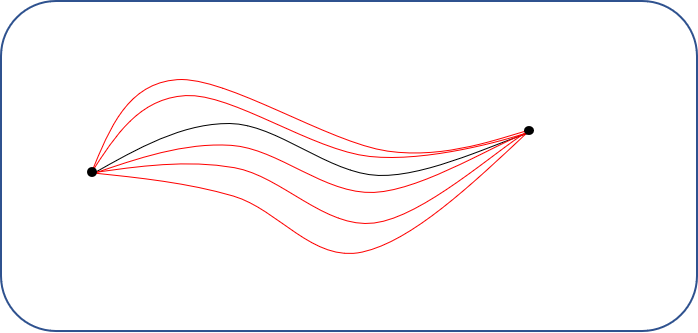}
\caption{Variation of a curve. The black curve is the original curve and green curves are its smooth variations.}
\end{figure}

\begin{proof}[\bf Proof of~\cref{thm:EL}]
$\gamma$ is a minimizer of $S$ if any small variation on $\gamma$  causes an increase in $S[\gamma]$.
More formally, if $\nu:(-c,c)\times[0,1]\to M$ is a variation of $\gamma$, then 
\begin{equ}
S[\nu_0] \leq S[\nu_u]
\end{equ}
for any $u\in(-c,c)$ where $\nu_u(t):=\nu(u,t)$.
Consequently,
\begin{equ}
\left.\frac{d S[\nu_u]}{d u}\right\rvert_{u=0} = 0.
\end{equ}
Now, let us evaluate derivative of $S[\nu_u]$ using~\eqref{eq:EL-S-def}.
\begin{align*}
0 	=& \left.\frac{d S[\nu_u]}{d u}\right\rvert_{u=0}\\
	=& \left.\frac{d}{du}\int_0^1 L\left(\nu_u, \frac{\partial \nu_u}{\partial t}, t\right)dt\right\rvert_{u=0}\\
	=& \int_0^1 \left.\frac{d}{du}  L\left(\nu_u, \frac{\partial \nu_u}{\partial t}, t\right)\right\rvert_{u=0} dt\\
	=& \int_0^1 \left.\inner*{\frac{\partial L\left(\nu_u, \frac{\partial \nu_u}{\partial t}, t\right)}{\partial \nu_u},\frac{\partial \nu_u}{\partial u}}\right\rvert_{u=0}
		+\left.\inner*{\frac{\partial L\left(\nu_u, \frac{\partial \nu_u}{\partial t}, t\right)}{\partial \left(\nicefrac{\partial \nu_u}{\partial t}\right)},\frac{\partial \left(\nicefrac{\partial \nu_u}{\partial t}\right)}{\partial u}}\right\rvert_{u=0}dt.
\end{align*}
We note that 
\begin{equ}
\frac{\partial \left(\nicefrac{\partial \nu_u}{\partial t}\right)}{\partial u} = \frac{\partial^2 \nu(u, t)}{\partial u \partial t} = \frac{\partial^2 \nu(u, t)}{\partial t \partial u} = \frac{\partial \left(\nicefrac{\partial \nu_u}{\partial u}\right)}{\partial t},
\end{equ}
by smoothness.
Let us denote $\left.\nicefrac{\partial \nu_u}{\partial u}\right\rvert_{u=0}$ by $\mu$.
Then by the chain rule,
\begin{align*}
0
	=& \int_0^1 \left.\inner*{\frac{\partial L\left(\nu_u, \frac{\partial \nu_u}{\partial t}, t\right)}{\partial \nu_u},\frac{\partial \nu_u}{\partial u}}\right\rvert_{u=0}
		+\left.\inner*{\frac{\partial L\left(\nu_u, \frac{\partial \nu_u}{\partial t}, t\right)}{\partial \left(\nicefrac{\partial \nu_u}{\partial t}\right)},\frac{\partial \left(\nicefrac{\partial \nu_u}{\partial t}\right)}{\partial u}}\right\rvert_{u=0}dt\\
	=& \int_0^1 \inner*{
		\left.\frac{\partial L\left(\nu_u, \frac{\partial \nu_u}{\partial t}, t\right)}{\partial \nu_u}\right\rvert_{\nu_u=\gamma},
		\left.\frac{\partial \nu_u}{\partial u}\right\rvert_{u=0}}
		+\inner*{
		\left.\frac{\partial L\left(\nu_u, \frac{\partial \nu_u}{\partial t}, t\right)}{\partial \left(\nicefrac{\partial \nu_u}{\partial t}\right)}\right\rvert_{\nicefrac{\partial \nu_u}{\partial t}=\dot{\gamma}},
		\left.\frac{\partial \left(\nicefrac{\partial \nu_u}{\partial t}\right)}{\partial u}\right\rvert_{u=0}}dt\\
	=& \int_0^1 \inner*{\frac{\partial L(\gamma,\dot{\gamma}, t)}{\partial \gamma}, \mu} + \inner*{\frac{\partial L(\gamma, \dot{\gamma}, t)}{\partial \dot{\gamma}}, \frac{d \mu}{dt}}dt.
\end{align*}
We note that $\mu(0)=\mu(1)=0$ as $\nu_u(0)=\gamma(0)$ and $\nu_u(1)=\gamma(1)$ for $u\in(-c,c)$.
Thus, integration by parts, we obtain
\begin{align*}
\int_0^1  \inner*{\frac{\partial L(\gamma, \dot{\gamma}, t)}{\partial \dot{\gamma}}, \frac{d \mu}{dt}}dt 
	=& \inner*{\frac{\partial L(\gamma, \dot{\gamma}, t)}{\partial \dot{\gamma}}, \left.\mu\right\rvert_{0}^1}
	 -\int_0^1  \inner*{\frac{d}{dt}\frac{\partial L(\gamma, \dot{\gamma}, t)}{\partial \dot{\gamma}}, \mu}dt \\
	 =& -\int_0^1  \inner*{\frac{d}{dt}\frac{\partial L(\gamma, \dot{\gamma}, t)}{\partial \dot{\gamma}}, \mu}dt .
\end{align*}
Consequently, we have
\begin{equ}[eq:EL-proof-0]
0 = \int_0^1 \inner*{\frac{\partial L(\gamma, \dot{\gamma}, t)}{\partial \gamma}-\frac{d}{dt}\frac{\partial L(\gamma, \dot{\gamma}, t)}{\partial \dot{\gamma}}, \mu}dt.
\end{equ}
If $\gamma$ is a minimizer of $S$, then~\eqref{eq:EL-proof-0} should hold for any variation.
As $\mu=\left.\nicefrac{\partial \nu_u}{\partial u}\right\rvert_{u=0}$, we can pick $\mu$ to be any smooth vector field along $\gamma$.
In particular, we can pick $\mu$ as
\begin{equ}
\mu =\frac{\partial L(\gamma, \dot{\gamma}, t)}{\partial \gamma} - \frac{d}{dt}\frac{\partial L(\gamma, \dot{\gamma}, t)}{\partial \dot{\gamma}},
\end{equ}
since $\gamma$ is a smooth function.
Consequently,
\begin{align*}
0=& \int_0^1 \inner*{\frac{\partial L(\gamma, \dot{\gamma}, t)}{\partial \gamma} - \frac{d}{dt}\frac{\partial L(\gamma, \dot{\gamma}, t)}{\partial \dot{\gamma}}, \mu}dt \\
=& \int_0^1\inner*{\frac{\partial L(\gamma, \dot{\gamma}, t)}{\partial \gamma} - \frac{d}{dt}\frac{\partial L(\gamma, \dot{\gamma}, t)}{\partial \dot{\gamma}}, \frac{\partial L(\gamma, \dot{\gamma}, t)}{\partial \gamma} - \frac{d}{dt}\frac{\partial L(\gamma, \dot{\gamma}, t)}{\partial \dot{\gamma}}}dt.
\end{align*}
This implies that
\begin{equ}
\inner*{\frac{\partial L(\gamma, \dot{\gamma}, t)}{\partial \gamma} - \frac{d}{dt}\frac{\partial L(\gamma, \dot{\gamma}, t)}{\partial \dot{\gamma}}, \frac{\partial L(\gamma, \dot{\gamma}, t)}{\partial \gamma} - \frac{d}{dt}\frac{\partial L(\gamma, \dot{\gamma}, t)}{\partial \dot{\gamma}}} = 0
\end{equ}
as it is non-negative everywhere.
Finally, 0 vector is the only vector whose norm is 0.
Therefore,
\begin{equ}
\frac{\partial L(\gamma, \dot{\gamma}, t)}{\partial \gamma} - \frac{d}{dt}\frac{\partial L(\gamma, \dot{\gamma}, t)}{\partial \dot{\gamma}} = 0.
\end{equ}
\end{proof}

\section{Minimizers of Energy Functional are Geodesics}
\label{sec:geodesic-equivalence}
In this section, we prove~\cref{thm:geodesic-equivalence}.
Given a curve $\gamma$ that minimize the energy functional, we derive the differential equations describing $\gamma$ in terms of Christoffel symbols and metric.
Later, we verify that these differential equations implies that $\nabla_{\dot{\gamma}}\dot{\gamma}=0$ where $\nabla$ is the Levi-Civita connection.
In other words, $\gamma$ is a geodesic with respect to the Levi-Civita connection.

\begin{proof}[Proof of~\cref{thm:geodesic-equivalence}]
Let us assume that $M$ is a $d$-dimensional manifold.
Let us fix a frame bundle, $\{\partial_i\}_{i=1}^d$, for $M$ and $\Gamma_{ij}^k$ be the Christoffel symbols corresponding to Levi-Civita connection.
Let us also define
\begin{equ}
\mathcal{E}(t):=\frac{1}{2}\sum_{i,j=1}^d g_{ij}(\gamma(t)) \dot{\gamma}_i(t)\dot{\gamma}_j(t).
\end{equ} 
Before we use~\cref{thm:geodesic-EL}, let us compute partial derivatives of $\mathcal{E}$.
\begin{equs}
\frac{\partial \mathcal{E}}{\partial \gamma_i} 
	=& \frac{1}{2}\sum_{j,k=1}^d \frac{\partial g_{jk}(\gamma(t))\dot{\gamma}_j(t)\dot{\gamma}_k(t)}{\partial \gamma_i}\\
	=& \frac{1}{2}\sum_{j,k=1}^d \frac{\partial g_{jk}(\gamma(t))}{\partial \gamma_i}\dot{\gamma}_j(t)\dot{\gamma}_k(t)\\
	=& \frac{1}{2}\sum_{j,k=1}^d \partial_i(g_{jk}(\gamma(t)))\dot{\gamma}_j(t)\dot{\gamma}_k(t)\\
\frac{\partial \mathcal{E}}{\partial \dot{\gamma}_i} 
	=& \frac{1}{2}\sum_{j,k=1}^d \frac{\partial g_{jk}(\gamma(t))\dot{\gamma}_j(t)\dot{\gamma}_k(t)}{\partial \dot{\gamma}_i}\\
	=& \sum_{j=1}^d  g_{ij}(\gamma(t))\dot{\gamma}_j(t)\\
\frac{d}{dt}\frac{\partial \mathcal{E}}{\partial \dot{\gamma}_i} 
	=& \frac{d}{dt} \sum_{j=1}^d  g_{ij}(\gamma(t))\dot{\gamma}_j(t)\\
	=& \sum_{j=1}^d \frac{d g_{ij}(\gamma(t))}{dt} \dot{\gamma}_j(t) + g_{ij}(\gamma(t))\frac{d \dot{\gamma}_j(t)}{dt}\\
	=& \sum_{j=1}^d \dot{\gamma}(t)(g_{ij}(\gamma(t))) \dot{\gamma}_j(t) + g_{ij}(\gamma(t))\ddot{\gamma}_j(t)\\
	=& \sum_{j,k=1}^d \partial_k(g_{ij})(\gamma(t)) \dot{\gamma}_k(t)\dot{\gamma}_j(t) + \sum_{j=1}^d g_{ij}(\gamma(t))\ddot{\gamma}_j(t)
\end{equs}
for $i=1,\hdots,d$.
Thus,~\cref{thm:geodesic-EL} implies that
 implies that
\begin{align*}
0
	=& \frac{d}{dt} \left(\frac{\partial \mathcal{E}}{\partial \dot{\gamma}_i}\right) - \frac{\partial \mathcal{E}}{\partial \gamma_i}\\
	=& \sum_{j,k=1}^d \partial_k(g_{ij}(\gamma(t))) \dot{\gamma}_k(t)\dot{\gamma}_j(t) + g_{ij}(\gamma(t))\ddot{\gamma}_j(t)-\frac{1}{2}\sum_{j,k=1}^d \partial_i(g_{jk}(\gamma(t)))\dot{\gamma}_j(t)\dot{\gamma}_k(t)\\
	=& \frac{1}{2}\sum_{j,k=1}^d(\partial_k g_{ij}(\gamma(t)) + \partial_j g_{ik}(\gamma(t))-\partial_i g_{jk}(\gamma(t)))\dot{\gamma}_j(t)\dot{\gamma}_k(t)+\sum_{j=1}^dg_{ij}(\gamma(t))\ddot{\gamma}_j(t)
\end{align*}
for $i=1,\hdots,d$.
Consequently,
\begin{align*}
\ddot{\gamma}_l(t)
	=&\sum_{j=1}^d \delta_{lj} \ddot{\gamma}_j(t)\\
	=&\sum_{i,j=1}^d g^{li}(\gamma(t))g_{ij}(\gamma(t))\ddot{\gamma}_j(t) \\
	=&- \frac{1}{2}\sum_{j,k=1}^d\sum_{i=1}^dg^{li}(\gamma(t))(\partial_k g_{ij}(\gamma(t)) + \partial_j g_{ik}(\gamma(t))-\partial_i g_{jk}(\gamma(t)))\dot{\gamma}_j(t)\dot{\gamma}_k(t)\\
	=& -\sum_{j,k=1}^d\Gamma^l_{jk}\dot{\gamma}_j(t)\dot{\gamma}_k(t)
\end{align*}
for $l=1,\hdots,d$ where $\delta_{lj}$ is Kronecker delta.
The second equality follows from $\sum_{i=1}^dg^{li}(\gamma(t))g_{ij}(\gamma(t))$ is equal to $lj$th entry of matrix multiplication $(G^{-1} G)$ which is 1 if $l=j$ and 0 otherwise.
Therefore,
\begin{equ}
\sum_{l=1}^d\left(\sum_{j,k=1}^d \dot{\gamma}_j(t)\dot{\gamma}_k(t)\Gamma_{jk}^l + \ddot{\gamma}_l\right)\partial_l = 0.
\end{equ}
Hence, $\nabla_{\dot{\gamma}}\dot{\gamma}=0$ by~\cref{prop:geodesic-christoffel}.
\end{proof}

\end{document}